\renewcommand{\(}{\left(}
\renewcommand{\)}{\right)}
\newtheorem{theo}{Theorem}[section]
\newtheorem{prop}{Proposition}[section]
\newtheorem{lemma}{Lemma}[section]
\newtheorem{cor}{Corollary\!\!}[section]
\newtheorem{ncor}{Corollary}
\theoremstyle{definition}
\newtheorem{df}{Definition}[section]
\newtheorem{ex}{Example}
\theoremstyle{remark}
\newtheorem{rem}{Remark\!\!}
\newtheorem{nrem}{Remark}
\newcommand{\beq}{\begin{equation}}
\newcommand{\eeq}{\end{equation}}
\newcommand{\bal}{\begin{align}}
\newcommand{\eal}{\end{align}}
\newcommand{\bals}{\begin{align*}}
\newcommand{\eals}{\end{align*}}
\newcommand{\barr}[1]{\begin{array}{#1}}
\newcommand{\earr}{\end{array}}
\newcommand{\bth}{\begin{theo}}
\newcommand{\bl}{\begin{lemma}}
\newcommand{\el}{\end{lemma}}
\newcommand{\bp}{\begin{prop}}
\newcommand{\ep}{\end{prop}}
\newcommand{\bdf}{\begin{df}}
\newcommand{\edf}{\end{df}}
\newcommand{\brem}{\begin{rem}}
\newcommand{\bnrem}{\begin{nrem}}
\newcommand{\enrem}{\end{nrem}}
\newcommand{\bex}{\begin{ex}}
\newcommand{\eex}{\end{ex}}
\newcommand{\bcor}{\begin{cor}}
\newcommand{\ecor}{\end{cor}}
\newcommand{\bncor}{\begin{ncor}}
\newcommand{\encor}{\end{ncor}}
\newcommand{\bpf}{\begin{proof}}
\newcommand{\epf}{\end{proof}}
\def\({\left(}
\def\){\right)}
\numberwithin{equation}{section}
\title{The maximum degree of planar graphs I. Series-parallel graphs}
\author{Michael Drmota\footnote{Technische Univerisit\"{a}t Wien, Institute of
Discrete Mathematics and Geometry, Wiedner Hauptstrasse 8–10,
A–1040 Wien, Austria. {\tt michael.drmota@tuwien.ac.at}
\newline
Research supported by the Austrian Science Foundation FWF, Project S9604}
 \and Omer Gim\'{e}nez\footnote{Universitat Polit\`{e}cnica de Catalunya,
Departament de Llenguatges i Sistemes Inform\`{a}tics,
 Jordi Girona 1--3, 08034
Barcelona, Spain. {\tt omer.gimenez@upc.edu}}
  \and  Marc Noy\footnote{Universitat
Polit\`{e}cnica de Catalunya, Departament de Matem\`{a}tica Aplicada II,
Jordi Girona 1--3, 08034 Barcelona, Spain. {\tt  marc.noy@upc.edu}
 \newline
 Research supported in part by Ministerio de Ciencia e Innovaci\'{o}n
 MTM2008-03020. }}
\date{}
\begin{document}

\maketitle

\begin{abstract}
We prove that the maximum degree $\Delta_n$ of a random
series-parallel graph with $n$ vertices satisfies $\Delta_n/\log n
\to c$ in probability, and $\mathbb{E}\, \Delta_n \sim c \log n$ for
a computable constant $c>0$. The same result holds for outerplanar
graphs.
\end{abstract}

%=============================================
\section{Introduction}\label{sec1}
%=============================================

All graphs in this paper are simple and labelled.  We recall that a
graph is series-parallel if it does not contain the complete graph
$K_4$ as a minor; equivalently, if it does not contain a subdivision
of $K_4$. Since both $K_5$ and $K_{3,3}$ contain a subdivision of
$K_4$, by Kuratowski's theorem a series-parallel graph is planar. An
outerplanar graph is a planar graph that can be embedded in the plane
so that all vertices are incident to the outer face. They are
characterized as those graphs not containing a minor isomorphic to (or
a subdivision of) either $K_4$ or $K_{2,3}$; hence they form a
subclass of series-parallel graphs.

In a previous paper \cite{DGN2} we determined the degree
distribution in series-parallel graphs. More precisely, we showed
that that the probability that a given vertex has degree $k$ in a
random series-parallel graph with $n$ vertices tends to a computable
constant $\overline d_k>0$ for all $k\ge1$, as $n$ goes to infinity.
In the present paper we use the ideas introduced in \cite{DGN2} and
develop new techniques in order to study the maximum degree in
random series-parallel graphs.

%\begin{theo}
Our main result  is the following. Let $\Delta_n$
denote the maximum degree of a random series-parallel graphs with
$n$ vertices. Then
\[
\frac{\Delta_n}{\log n} \to c \qquad\mbox{in probability}
\]
and
\[
\mathbb{E}\, \Delta_n \sim c \log n
\]
for a certain constant $c> 0$. The same result holds for 2-connected
series-parallel graphs, and for outerplanar and 2-connected
outerplanar graphs with suitable values of $c$.
%\end{theo}
 %
The constant $c$ is always well-defined analytically  and can be
computed as $c = 1/\log (1/q)$, where $0<q<1$ is the exponential
base in the asymptotic expansion of the corresponding degree
distribution. We have
\begin{align*}
c &\approx 3.482774  \quad  \mbox{for series-parallel
graphs}, \\
c &\approx 1.035792  \quad  \mbox{for outerplanar graphs},\\
c &\approx 3.679771  \quad  \mbox{for $2$-connected series-parallel graphs},\\
c &\approx 1.134592  \quad  \mbox{for $2$-connected outerplanar
graphs}.
\end{align*}

This result was conjectured in \cite{BPS} and, as we are going to
see, is the natural result to expect in this context. McDiarmid and
Reed \cite{MR} have recently proved that the maximum degree
$\Delta_n$ in random planar graphs is of order $\Theta(\log n)$ with
high probability; it is thus natural to expect that  with high
probability $\Delta_n \sim c\log n$ also in this case; this will be
treated in a companion paper \cite{DGN3}. We remark that an
analogous result holds for planar maps \cite{gao-wormald}, counted
according to the number of edges; in this case much more is know,
since the authors obtain the full limit distribution of $\Delta_n$.

Intuitively the reason for  the $\Delta_n \sim c\log n$ estimate is
the following. Let $d_{n,k}$ denote the probability that a random
vertex in a random planar graph of size $n$ has degree $k$. Then it
is known (see \cite{DGN2}) that $d_{n,k}\to \overline d_k$ as
$n\to\infty$, where $\overline d_k$ is a sequence of positive
numbers that satisfy $\overline d_{k}\sim c k^\alpha q^k$ as
$k\to\infty$ (for computable constants $c$ and $q$). Thus, we can
expect that $d_{n,k}\approx c k^\alpha q^k$ holds for $n,k\to\infty$
(in a properly chosen range) and also
\[
\sum_{\ell > k} d_{n,\ell}\approx \frac {cq}{1-q} k^\alpha q^k.
\]
Furthermore, let $Y_{n,k}$ denote the random variable that counts
the number of vertices of degree $> k$ in a random planar graph of size $n$. Then
\begin{equation}\label{eqYnkest}
\mathbb{E}\, Y_{n,k} = n\sum_{\ell > k}  d_{n,\ell} \approx n \frac
{cq}{1-q} k^\alpha q^k,
\end{equation}
and by definition
\[
Y_{n,k} > 0 \quad \Longleftrightarrow \quad \Delta_n> k.
\]
Hence the probability $\mathbb{P}\{ \Delta_n> k\} =
\mathbb{P}\{ Y_{n,k} > 0\} \le \mathbb{E}\, Y_{n,k}$
is negligible if  $n k^\alpha q^k \to 0$. Usually such a threshold
is tight so that one can expect that the converse statement is also
true, which implies %(\ref{eqDeltanrel})
$\Delta_n \sim c\log n$ for
$ c = 1/\log (1/q)$.

In this paper we make this heuristics rigorous by applying the first
and second moment method. The precise statement that we show is the
following, which can be considered as a kind of Master Theorem for
proving results on the maximum degree. The proof is based on
standard techniques, and we present it in Appendix~A.
\begin{theo}\label{Th1}
Let $d_{n,k}$ denote the probability that a randomly selected vertex
of a certain class of random graphs of size $n$ has degree $k$, and
let $d_{n,k,\ell}$ denote the probability that two different
randomly selected (ordered) vertices have degrees $k$ and $\ell$.
Suppose that we have the following properties:
\begin{enumerate}
\item There exists a limiting degree distribution $\overline d_k$ ($k\ge 1$)
with an asymptotic behaviour of the form
\begin{equation}\label{eqass1}
\log \overline d_k \sim k \log q \qquad (k \to\infty),
\end{equation}
where $q$ is a  real constant with $0<q<1$.
\item We have, as $n\to\infty$, $k\to\infty$, $\ell\to\infty$,
and uniformly for $k, \ell\le C \log n$
(for an arbitrary constant $C > 0$)
\begin{equation}\label{eqass2}
d_{n,k} \sim \overline d_k \quad \mbox{and}\quad d_{n,k,\ell} \sim \overline d_k \overline d_\ell.
\end{equation}
\item There exists $\overline q<1$ such that, uniformly for all $n,k,\ell\ge
1$,
\begin{equation}\label{eqass3}
d_{n,k} = O(\overline q^k)\quad \mbox{and}\quad
d_{n,k,\ell} = O(\overline q^{k+\ell}) .
\end{equation}
\end{enumerate}
Let $\Delta_n$ denote the maximum degree of a random graph of size
$n$ in this class. Then
\begin{equation}\label{eqDeltanresult0}
\frac{\Delta_n}{\log n} \to \frac 1{\log (1/q)}\qquad \mbox{in
probability}
\end{equation}
and
\begin{equation}\label{eqDeltanresult}
\mathbb{E}\, \Delta_n \sim \frac 1{\log (1/q)}\, \log n \qquad
(n\to\infty).
\end{equation}
\end{theo}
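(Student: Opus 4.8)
The plan is to run the first and second moment method on the random variable $Y_{n,k}$ counting vertices of degree $>k$, exactly as in the heuristic above, with $k$ taken in a window around $c\log n$. Set $p_{n,k} := \sum_{\ell>k} d_{n,\ell}$, so that $\mathbb{E}\,Y_{n,k} = n\,p_{n,k}$. From assumption \eqref{eqass1} we have $\log\overline d_k\sim k\log q$, hence $\log\sum_{\ell>k}\overline d_\ell\sim k\log q$ as well (the tail is dominated by its first term up to subexponential factors, using $0<q<1$), and by \eqref{eqass2} the same asymptotics transfer to $\log p_{n,k}$ uniformly for $k\le C\log n$. Fix $\epsi>0$ and write $k_\pm = (1\pm\epsi)\,c\log n$ with $c=1/\log(1/q)$.

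For the upper bound I would show $\mathbb{P}\{\Delta_n > k_+\}\to 0$. Since $Y_{n,k}>0\iff\Delta_n>k$, Markov's inequality gives $\mathbb{P}\{\Delta_n>k_+\}\le\mathbb{E}\,Y_{n,k_+} = n\,p_{n,k_+}$. For $k$ in this range, $\log(n\,p_{n,k}) = \log n + \log p_{n,k} \sim \log n + k\log q = \log n(1 - (1+\epsi)c\log(1/q)) = -\epsi\log n \to -\infty$, so $\mathbb{E}\,Y_{n,k_+}\to 0$ and hence $\Delta_n\le k_+$ with high probability. The cruder bound $d_{n,k}=O(\overline q^k)$ from \eqref{eqass3} is what one needs to control the contribution of very large $k$ (beyond $C\log n$) in the analogous computation for $\mathbb{E}\,\Delta_n$, splitting $\mathbb{E}\,\Delta_n = \sum_{k\ge 0}\mathbb{P}\{\Delta_n>k\}$ into the main range, where \eqref{eqass2} applies, and a tail range $k > C\log n$, where one bounds $\mathbb{P}\{\Delta_n>k\}\le n\sum_{\ell>k}O(\overline q^\ell) = O(n\,\overline q^k)$, which is summable and $o(1)$ once $C$ is chosen large enough (say $C\log(1/\overline q)>1$).

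For the lower bound I would show $\mathbb{P}\{\Delta_n\le k_-\}\to 0$ via the second moment method: $\mathbb{P}\{Y_{n,k_-}=0\}\le \V(Y_{n,k_-})/(\mathbb{E}\,Y_{n,k_-})^2 = \mathbb{E}(Y_{n,k_-}^2)/(\mathbb{E}\,Y_{n,k_-})^2 - 1$. Now $\mathbb{E}(Y_{n,k_-}^2) = \mathbb{E}\,Y_{n,k_-} + n(n-1)\sum_{\kappa,\lambda>k_-} d_{n,\kappa,\lambda}$, and $(\mathbb{E}\,Y_{n,k_-})^2 = n^2\big(\sum_{\kappa>k_-}d_{n,\kappa}\big)^2$. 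By \eqref{eqass2}, $d_{n,\kappa,\lambda}\sim\overline d_\kappa\overline d_\lambda$ uniformly for $\kappa,\lambda\le C\log n$, and since $k_-\le C\log n$ (choose $C>c$) and the sums are dominated by the first few terms beyond $k_-$ (geometric-type decay from \eqref{eqass3} gives a uniformly summable bound justifying dominated-convergence-style interchange), we get $\sum_{\kappa,\lambda>k_-}d_{n,\kappa,\lambda}\big/\big(\sum_{\kappa>k_-}d_{n,\kappa}\big)^2\to 1$. Combined with $\mathbb{E}\,Y_{n,k_-} = n\,p_{n,k_-}$, whose logarithm is $\sim\log n(1-(1-\epsi)c\log(1/q)) = \epsi\log n\to+\infty$, the ratio $\mathbb{E}(Y_{n,k_-}^2)/(\mathbb{E}\,Y_{n,k_-})^2\to 1$, so $\mathbb{P}\{\Delta_n\le k_-\}\to 0$. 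Together the two bounds give $\Delta_n/\log n\to c$ in probability; convergence of $\mathbb{E}\,\Delta_n/\log n$ then follows from the tail estimate above, which furnishes the uniform integrability needed to upgrade convergence in probability to convergence of the mean.

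The main obstacle is the uniformity in step two: one must ensure that the asymptotic equivalences $d_{n,k}\sim\overline d_k$ and $d_{n,k,\ell}\sim\overline d_k\overline d_\ell$ hold \emph{uniformly} over the relevant window $k,\ell\asymp c\log n$ — a genuinely growing range — and that the tail sums $\sum_{\ell>k}$ can be handled without losing this uniformity. The role of assumption \eqref{eqass3} is precisely to provide a uniform geometric majorant $O(\overline q^{k+\ell})$ that legitimizes replacing $\sum_{\ell>k}d_{n,\ell}$ by $\sum_{\ell>k}\overline d_\ell\sim\overline d_{k+1}/(1-q')$-type expressions and, in the second-moment computation, controls the off-diagonal terms where one index is large. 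Once this uniform-tail bookkeeping is set up carefully, the rest is the textbook first/second moment argument.
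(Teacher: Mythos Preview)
Your proposal is correct and follows essentially the same route as the paper: first and second moment method applied to $Y_{n,k}=\#\{v:\deg v>k\}$, using $\mathbb{E}\,Y_{n,k}=n\sum_{\ell>k}d_{n,\ell}$ and $\mathbb{E}\,Y_{n,k}^2=\mathbb{E}\,Y_{n,k}+n(n-1)\sum_{\kappa,\lambda>k}d_{n,\kappa,\lambda}$, with assumption~\eqref{eqass3} supplying the uniform geometric tail needed to sum beyond the window $k\le C\log n$. The only cosmetic difference is that the paper bounds $\mathbb{E}\,\Delta_n=\sum_k\mathbb{P}\{\Delta_n>k\}$ directly from the two-sided bounds on $\mathbb{P}\{\Delta_n>k\}$ (via thresholds $k_0(n),k_1(n)$), whereas you first establish convergence in probability and then invoke the tail bound as uniform integrability; both are equivalent.
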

Condition 1 is fulfilled for planar, series-parallel and outerplanar
graphs, as shown in \cite{DGN2}. Condition 2 is the key to applying
the second moment method, as it gives access to the variance of the
number of vertices of degree $k$. It can be viewed as a kind of
asymptotic independence, in the sense that for random vertices $v_1$
and $v_2$, as $n\to\infty$ we have
$$
\mathbb{P}(\deg(v_1)=k,\deg(v_2)=l) \sim \mathbb{P}(\deg(v_1)=k)\,
\mathbb{P}(\deg(v_2)=l).
$$
The last condition is purely technical and is usually easy to
verify.

Most of the paper is devoted to showing that outerplanar and
series-parallel graphs satisfy the conditions imposed by
Theorem~\ref{Th1}, the bulk of the work being on verifying condition
2. For each of the two classes of graphs, we compute first the
associated counting generating functions from combinatorial
decompositions: this is done in Sections~\ref{secOutcomb}
(outerplanar) and~\ref{secSPcomb} (series-parallel). Then, we
analyze the generating functions as functions of complex variables
in order to obtain precise asymptotics for the probability that a
vertex or a pair
of vertices have given degrees: % This done using singularity
%analysis, together with Hayman's method for dealing with essential
%singularities:
this is done in Sections~\ref{secOutAsymp} and~\ref{secSPAsymp}. These
sections make use of several technical lemmas whose proofs are based
on the Cauchy integration formula of analytic functions in several
variables. The proofs of these lemmas are given in Appendix~B.

Before concluding this introduction, we present some technical
preliminaries needed in the paper: first the combinatorics of
generating functions and secondly some analytic considerations.

\paragraph{Generating functions.}

We use the following notation. For a class of labelled graphs
$\mathcal{G}$ having $g_n$ graphs with $n$ vertices, we use
\[ G(x) = \sum_n g_n \frac{x^n}{n!} \]
to denote the \emph{exponential generating function} of
$\mathcal{G}$. A \emph{rooted graph} is a graph with a distinguished,
or marked, vertex. A \emph{double rooted graph} is a graph with two
marked vertices that are different (we cannot mark the same vertex
twice) and distinguishable (there is a first root vertex and a second
root vertex). For convenience, we assume that marked vertices are not
labelled, and they do not contribute towards the size of the
graph. Note that the derivatives of $G(x)$,
\[
  G'(x) = \sum_{n\geq 1} n g_n \frac{x^{n-1}}{n!} = \sum_{n} g_{n+1} \frac{x^{n}}{n!},
  \qquad G''(x) = \sum_{n} g_{n+2} \frac{x^{n}}{n!},
\]
can also be interpreted as the exponential generating functions of
rooted and double rooted graphs in $\mathcal{G}$.

When rooting a graph, we are often interested in the degree of
the marked vertices, so we introduce generating functions
\[ G^{\bullet}(x,w) = \sum_{n,k} g^\bullet_{n} d_{n+1,k} \frac{x^n}{n!} w^k,
\qquad G^{\bullet \bullet}(x,w,t) = \sum_{n,k,\ell} g^{\bullet
\bullet}_{n} d_{n+2,k,\ell} \frac{x^n}{n!} w^k t^\ell  \]
to enumerate rooted and double rooted graphs of $\mathcal{G}$, where
the exponent of variable $x$ counts the number of non-root vertices,
the exponents of variables $w$ and $t$ count the degrees of the
first and second root, and the numbers $d_{n,k}$ and
$d_{n,k,\ell}$ are the probabilities that a randomly selected vertex
(respectively, two randomly selected vertices) of a graph of
$\mathcal{G}$ of size $n$ has degree $k$ (respectively, have degrees
$k$ and $\ell$). Notice that with this terminology, the coefficient
$g^\bullet_{n} d_{n+1,k}$ is precisely the number of rooted graphs
with $n$ vertices and where the root has degree $k$, and similarly for
$ g^{\bullet \bullet}_{n} d_{n+2,k,\ell}$. Since it holds that
\[ G^{\bullet}(x,1) = G'(x), \qquad G^{\bullet \bullet}(x,1,1) = G''(x) \]
we see that $g^\bullet_n=g_{n+1}$ and $g^{\bullet
\bullet}_n=g_{n+2}$.

For the sake of readability, we always use $B(x)$ and $C(x)$ to
denote the generating functions of 2-connected and connected graphs
of the class of graphs we are working with, and $d_{n,k}$ and
$d_{n,k,\ell}$ to denote the associated probabilities; the context
will always make clear which class of graphs we refer to.

\paragraph{Singularity analysis.}

To obtain the asymptotic estimates for $d_{n,k}$ and $d_{n,k,l}$ as
required by Theorem~\ref{Th1}, we analyse the singularities of the
corresponding generating functions. It turns out that all these
generating functions share the same singularity structure, which we
proceed to describe.

A \emph{power series of the square-root type} is a power series
$y(x)$ with a square root singularity at
$x_0>0$, that is, $y(x)$ admits a local representation of the form
\begin{equation}\label{eqyx}
y(x) = g(x) - h(x)\sqrt{1-x/x_0}
\end{equation}
for $|x-x_0|<\varepsilon$ for some $\varepsilon>0$ and $|\arg(x-x_0)|>
0$, where $g(x)$ and $h(x)$ are analytic and non-zero at
$x_0$. Moreover, $y(x)$ can be analytically continued to the region
\[
D = D(x_0,\varepsilon) = \{ x\in \mathbb{C} : |x|< x_0+\varepsilon \}
\setminus [x_0,\infty).
\]
Note that $y(x)$ can be represented alternatively as a power series
in $X=\sqrt{1-x/x_0}$,
\[
y(x) = a_0 + a_1 X + a_2 X^2 + a_3 X^3 + \cdots
\]
for $|X| < \varepsilon^{1/2}$.

We illustrate the common singularity structure of our generating
functions by using the explicit expression of the generating function
of rooted 2-connected outerplanar graphs,
\begin{equation}\label{eq:B_bullet_xwy}
B^{\bullet}(x,w) = xw +      {xw^2 \over 2} {y(x) \over 1 -
    y(x)w},
\end{equation}
to be derived in Lemma~\ref{Leop21}, where $y(x)$ is an explicit
power series of the square-root type. Clearly, $B^\bullet(x,w)$ has
two possible sources of singularities: the square-root singularity
of $y(x)$ at $x=x_0$, and the vanishing of the denominator $1-y(x)w$
at $w=1/y(x)$. These two sources coalesce at the critical point
$x=x_0, w=1/y(x_0)$ (equivalently, $w=1/g(x_0)$ due to the local
representation $y(x)=g(x)-h(x)X$). We derive asymptotic estimates
for $d_{n,k}$ with $n,k\rightarrow \infty$, on the range $k \le
C\log n$ for any $C>0$, by using multivariate Cauchy coefficient
extraction on $B^\bullet(x,w)$ with an integration path close to the
critical point $(x,w)=(x_0, 1/g(x_0))$. More precisely, we integrate
along Hankel contours, following Flajolet and Odlyzko's transfer
theorems~\cite{FO}.

As for the remaining generating functions, they do not admit explicit
expressions as $B^\bullet(x,w)$, but they share the same singularity
structure: the square-root singularity of $y(x)$, and the vanishing of
a term $1-y(x)w$. Curiously enough, the nature of the singularity
induced by $1-y(x)w=0$ is different from case to case. This fact
justifies the need of distinct but closely related technical lemmas
tailored to the particular shapes of the generating functions. To wit,
these singularities are poles and double poles for 2-connected
outerplanar graphs (Lemmas~\ref{Le3} and \ref{Le4}), an essential
singularity $e^{\infty}$ for connected outerplanar graphs
(Lemmas~\ref{Le5} and \ref{Le6}, whose proofs make use of Hayman's
method~\cite{Hayman} to obtain the asymptotics), and powers of
square-roots for 2-connected and connected series-parallel graphs
(Lemmas~\ref{Le7} and \ref{Le8}).

%=============================================
%=============================================
\section{Outerplanar graphs: combinatorics}\label{secOutcomb}
%=============================================
%=============================================

In this section we study the combinatorics of rooted and double
rooted outerplanar graphs with respect to the degree of the roots.
We obtain explicit or nearly explicit expressions for the
corresponding generating functions, both for the case of 2-connected
and connected outerplanar graphs. The analysis of their
singularities is done later in Section~\ref{secOutAsymp}, where
we also state the main results on the maximum degree of outerplanar
graphs.

%=============================================
\subsection{$2$-Connected Outerplanar Graphs}
%=============================================

Recall that an outerplanar graph is a planar graph that can be
embedded in the plane so that all vertices are incident to the
external face. Furthermore, $2$-connected outerplanar graphs are
quite close to dissections. A dissection is a $(n+2)$-gon (with
$n\ge 1$) where one edge is rooted and the vertices are connected
inside it by means of diagonals that do not cross.

Let $A(x)$ denote the generating function of dissections
with $n+2$ vertices, that is, the two vertices of the
root edge are not counted. Then $A(x)$ is determined by the system
\begin{align*}
A(x) &= (1+A(x))\,x \, S(x),\\
S(x) &= (1+A(x))(1 + xS(x)),
\end{align*}
where the  generating function $S(x)$ enumerates non-empty {\it
chains} of dissections and single edges, the root edges
are lined up, and the two vertices of the chain (the two poles of
the network) are not counted (see Figure~\ref{f:dissections}).
Note that the term $1$ in $1+A(x)$ denotes the graph consisting of
a single edge (which is not a dissection) while the term $1$ in
$1+xS(x)$ denotes the empty chain.

\begin{figure}[htp] \centering
\includegraphics[width=0.45\hsize]{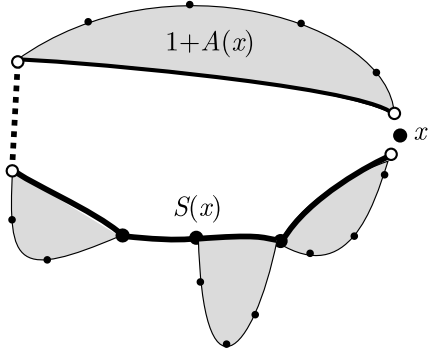}
\caption{Closing a chain of dissections and edges to obtain a new
  dissection; the dashed edge is the new root edge.}
          \label{f:dissections}
\end{figure}

This system can be solved explicitly  and we find that  $A(x)$
satisfies the quadratic equation
\[
2xA^2 + (3x-1)A + x = 0
\]
and is given by
\begin{equation}\label{eq:D}
    A(x) = {1-3x - \sqrt{1-6x+x^2} \over 4x}.
\end{equation}
Observe that
\[
S(x) = 2A(x) + 1 = {1-x - \sqrt{1-6x+x^2} \over 2x}.
\]

Next we consider 2-connected labelled outerplanar graphs. There is
exactly one 2-connected outerplanar graph with two vertices,
namely a single edge. However, when $n\ge 3$ the number of
2-connected outerplanar graphs is
\[
b_n =\frac {(n-1)!}2 \,a_{n-2}.
\]
To see this, note that $b_n$ coincides with the number
$b^\bullet_{n-1}$ of \emph{rooted} 2-connected outerplanar graphs on
$n$ vertices with labels $1,2,\ldots,n-1$ (choose the vertex with
label $n$ as root, and remove the label). Now consider a dissection
with $n$ vertices, whose number is $a_{n-2}$. Direct the root edge
of the dissection in counterclockwise order, and mark its first
vertex. Then, there are exactly $(n-1)!$ ways to label the remaining
$n-1$ vertices with $1,2,\ldots,n-1$. Finally, since the direction
of the outer cycle is irrelevant, divide the resulting number
$(n-1)! a_{n-2}$ by $2$ to get back $b_n$.
%This is because  $b_n$ can also be considered as the number of
%2-connected outerplanar graphs with $n$ vertices, where one vertex
%is marked (or rooted) and the remaining $n-1$ vertices are
%labelled by $1,2,\ldots,n-1$. We just have to identify the vertex
%having label $n$ with the marked vertex. Now consider a dissection
%with $n$ vertices, whose number is $a_{n-2}$. We mark the first
%vertex $v_1$ of the root edge $e = (v_1,v_2)$ (where the vertices
%are numbered counter clockwise). Then there are exactly $(n-1)!$
%ways to label the remaining $n-1$ vertices by $1,2,\ldots,n-1$.
%Finally since the direction of the outer circle is irrelevant, we
%have to divide the resulting number $(n-1)! a_{n-2}$ by $2$ to get
%back $b_n$. Thus we have
Now, it is just a matter of computation to obtain that
\[
B'(x) = \sum_{n\ge 1} b_{n+1}
\frac{x^n}{n!} = x + \frac 12 x A(x) = \frac {1+5x - \sqrt{1-6x+x^2}}{8}.
\]

Next we discuss the distribution of the degree of the first
root in dissections. It will be easy to translate this into a
corresponding result for $2$-connected outerplanar graphs. Let
$A(x,w)$ be the generating function of dissections of an
$(n+2)$-gon, where the exponent of $w$ counts the degree of the
first vertex of the root edge.  Similarly to the above we have
\[
A(x,w) = w(w+A(x,w))\, x\, S(x),
\]
and thus
\[
A(x,w) = \frac{xw^2 S(x)}{1-xS(x)} =  \frac{xw^2 (2A(x) + 1)}{1-xw(2A(x) + 1)}.
\]
In this context we introduce for later use a generating function
$S(x,w)$ that corresponds to series of dissections (compare with the
above description) where the exponent of $w$ counts the degree of the
first pole. Here we have
\[
S(x,w) = (w+A(x,w))(1+xS(x)).
\]

With the help of $A(x,w)$ we obtain an explicit representation for
the generating function for rooted $2$-connected outerplanar graphs.

\begin{lemma}\label{Leop21}
Let $B^\bullet(x,w)$ denote the bivariate generating function of
labelled rooted $2$-connected outerplanar graphs, where the exponent
of $x$ counts the number of non-root vertices and the exponent of
$w$ counts the degree of the root vertex. Then we have
\begin{align*}
B^\bullet(x,w) &= \sum_{n,k\ge 1} b_n^\bullet\, d_{n+1,k}\, \frac{x^n}{n!}\, w^ k \\
&= xw + \frac 12 x A(x,w) \\
% &= xw + \sum_{k=2}^\infty  {x^k \over 2}(2A(x)+1)^{k-1}w^k \\
&= xw +      {xw^2 \over 2} {x(2A(x)+1) \over 1 -
    x(2A(x)+1)w},
\end{align*}
where $b_n^\bullet=b_{n+1}$, and $d_{n+1,k}$ is the probability that
a randomly selected vertex of a labelled 2-connected outerplanar
graph of size $n+1$ has degree $k$.
\end{lemma}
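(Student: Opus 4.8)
The plan is to build $B^\bullet(x,w)$ from the already-established decomposition of $2$-connected outerplanar graphs into dissections. Recall from the discussion preceding the lemma that a rooted $2$-connected outerplanar graph on $n+1$ vertices (root unlabelled, $n$ labelled non-root vertices) corresponds, after directing the outer cycle counterclockwise and marking the root, to a dissection on $n+1$ vertices together with a labelling of the $n$ non-root vertices, divided by $2$ for the choice of orientation. The one exception is the single edge, which is a $2$-connected outerplanar graph but not a dissection; rooted at one endpoint it contributes the term $xw$ (one non-root vertex, root of degree $1$). So the first step is simply to record $B^\bullet(x,w) = xw + \tfrac12 x A(x,w)$, where the factor $x$ accounts for the extra labelled vertex that sits on the root edge but is not the marked pole of the dissection (equivalently, shifting the exponent of $x$ by one relative to $A$), the factor $\tfrac12$ is the orientation correction, and $A(x,w)$ tracks the degree of the marked endpoint of the root edge via the exponent of $w$. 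One must check that the degree of the marked vertex in the outerplanar graph equals the degree of the first vertex of the root edge in the dissection — this holds because the outer cycle edges and the diagonals incident to that vertex are exactly its neighbours in the graph.

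The second step is to substitute the explicit formula for $A(x,w)$ derived just above the lemma,
\[
A(x,w) = \frac{xw^2 (2A(x)+1)}{1 - xw(2A(x)+1)},
\]
which gives the third displayed line. Finally, using $S(x) = 2A(x)+1$ and the closed form \eqref{eq:D} for $A(x)$ one could rewrite everything in terms of $\sqrt{1-6x+x^2}$, but for the purposes of the lemma the form with $2A(x)+1$ (equivalently with $y(x)$ in \eqref{eq:B_bullet_xwy}, where $y(x) = xS(x) = x(2A(x)+1)$) is what is wanted.

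I do not anticipate a serious obstacle here: the identity $A(x,w) = w(w+A(x,w))xS(x)$ was already justified, and $A(x,1) = A(x)$, $B^\bullet(x,1) = B'(x) = x + \tfrac12 xA(x)$ provide immediate consistency checks against the formulas computed earlier in the section. The only point requiring a little care is the bookkeeping of which vertices are counted by the exponent of $x$ — the dissection $A(x)$ counts $(n+2)$-gons with the two root-edge vertices excluded, whereas in $B^\bullet$ the exponent of $x$ counts all non-root vertices including the second endpoint of the root edge — so one must verify that the single factor of $x$ in $\tfrac12 x A(x,w)$ correctly reconciles the two conventions and that the combinatorial class being enumerated is indeed all rooted $2$-connected outerplanar graphs of size $\ge 2$ with no double-counting. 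Once this is confirmed, the three displayed equalities follow by direct substitution.
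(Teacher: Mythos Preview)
Your proposal is correct and follows essentially the same argument as the paper: the lemma is stated there without a separate proof because it follows directly from the preceding discussion (the bijection between rooted $2$-connected outerplanar graphs and dissections, the factor $\tfrac12$ for orientation, the factor $x$ for the second root-edge vertex, the special term $xw$ for the single edge), together with the explicit formula for $A(x,w)$ derived just above. Your bookkeeping of which vertices are counted by $x$ and the identification of the root's degree with that of the first root-edge vertex in the dissection are exactly the points that make the derivation work.
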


Next we study double rooted 2-connected outerplanar graphs. As
before, we start with dissections. Let $A_1(x,w,t)$ denote the
generating function of dissections, where the exponents of $w$ and
$t$ count the degree of the first and the second vertex of the root
edge (in counterclockwise order). Similarly, we define $A_2(x,w,t)$
as the generating function of dissections $D$ with an additional
root vertex $v$ not in the root edge, where the exponent of $w$ and
$t$ count the degree of the first vertex of the root edge and the
degree of $v$, respectively. We also introduce the generating
function $S_2(x,w,t)$ for series of dissections with an additional
root $v$ not in the poles, where the exponents of $w$ and $t$ count
the degree of the first pole and the degree of $v$. Then we have the
following relations:
\begin{align*}
A_1(x,w,t) &= (w+A(x,w))wxt S(x,t)\\
%&=(w+A(x,w))wxt(t+A(x,t))(1+xS(x)),\\
A_2(x,w,t) &= (wt+A_1(x,w,t))wxS(x,t)\\
&+ A_2(x,w,t)xw S(x) \\
&+ (w+A(x,w))xw S_2(x,1,t),\\
%S_1(x,w,t) &= (xwt + A_1(x,w,t))(1 + xS(x,t))\\
S_2(x,w,t) &= A_2(x,w,t)(1+xS(x)) \\
&+ (wt+A_1(x,w,t))x S(x,t)\\
&+ (w+A(x,w))x S_2(x,1,t).
\end{align*}
The three summands for $A_2$ and $S_2$ correspond to the three
places where the additional root $v$ can be placed: inside the first
dissection, at the articulation vertex, or inside the series of
dissections. Summing up, this yields
\begin{align*}
A(x,w) &=  \frac{xw^2 (2A + 1)}{1-xw(2A + 1)},\\
S(x,w) &= \frac{w(1+x(2A+1))}{1-xw(2A + 1)},\\
A_1(x,w,t) &=  \frac{xw^2t^2(1+x(2A+1))}{(1-xw(2A + 1))(1-xt(2A + 1))} \\
A_2(x,1,t) &= \frac{xt^2(1+x(2A+1))}{(1-xt(2A+1))^2(1-x(4A+3))}\\
S_2(x,1,t) &= 2\frac{xt^2(1+x(2A+1))}{(1-xt(2A+1))^2(1-x(4A+3))}\\
A_2(x,w,t) &= \frac{xw^2t^2 (1+x(2A+1))( P_1   + x(wt-w-t)P_2)}{(1-xw(2A+1))^2(1-xt(2A+1))^2(1-x(4A+3))},
\end{align*}
where
\begin{align*}
P_1 &= 1-x(4A+1), \qquad  P_2 = 1-2A +x(2A+1).
\end{align*}

\begin{lemma}\label{Leop22}
Let $B^{\bullet\bullet}(x,w,t)$ denote the generating function of
double rooted labelled $2$-connected outerplanar graphs, where the
exponent of $x$ counts the number of non-root vertices, and the
exponents of $w$ and $t$ count, respectively, the degree of the
first and second root. Then we have
\begin{align*}
B^{\bullet\bullet}(x,w,t) &=
\sum_{n,k,\ell}
b_n^{\bullet\bullet}\, d_{n+2,k,\ell}\, \frac{x^n}{n!}\, w^k\, t^\ell\\
&= wt + \frac 12 A_1(x,w,t) + \frac 12 A_2(x,w,t)\\
&= wt + \frac{x(1+x(2A+1)t^2w^2}{2(1-xw(2A + 1))(1-xt(2A + 1))} \\
&+ \frac{xw^2t^2 (1+x(2A+1))( P_1   + x(wt-w-t)P_2)}{2(1-xw(2A+1))^2(1-xt(2A+1))^2(1-x(4A+3))},
\end{align*}
where $b_n^{\bullet\bullet} = b_{n+2}$, and $d_{n+2,k,\ell}$ denotes
the probability that two randomly selected vertices of a labelled
2-connected outerplanar graph of size $n+2$ have degrees $k$
and~$\ell$.
\end{lemma}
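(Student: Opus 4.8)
The plan is to mimic the decomposition already used for $B^\bullet(x,w)$ in Lemma~\ref{Leop21}, now keeping track of two marked vertices instead of one. Recall that a rooted $2$-connected outerplanar graph on $n+1$ vertices is, after removing the root and cutting the outer cycle at the root, essentially a dissection of an $(n+1)$-gon whose root edge carries the two neighbours of the (removed) root vertex; this is the content of the identity $B^\bullet(x,w) = xw + \tfrac12 xA(x,w)$, the factor $\tfrac12$ accounting for the two orientations of the outer cycle and the isolated term $xw$ for the single edge $K_2$. For the double rooted case I would distinguish, as the text already indicates, two situations according to where the second root $v$ lies relative to the root edge of the associated dissection: either $v$ is one of the two endpoints of the root edge, or $v$ is an interior vertex of the dissection. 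These two cases are enumerated precisely by $A_1(x,w,t)$ and $A_2(x,w,t)$ respectively, so that
\[
B^{\bullet\bullet}(x,w,t) = wt + \tfrac12 A_1(x,w,t) + \tfrac12 A_2(x,w,t),
\]
where the term $wt$ is the contribution of the two-vertex graph $K_2$ (both roots have degree $1$), and the global factor $\tfrac12$ again kills the orientation of the outer cycle. Plugging in the explicit rational expressions for $A_1$ and $A_2$ listed just above the lemma statement yields the displayed formula.

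The substantive work is therefore to justify the three functional equations for $A_1$, $A_2$, $S_2$ and to solve them. For $A_1$ one argues directly: a dissection with both endpoints of the root edge marked decomposes, reading counterclockwise from the first marked vertex, as an edge or a dissection hanging off the first vertex (generating function $w + A(x,w)$, the extra $w$ recording the root edge at that vertex), followed by the root edge itself (a factor $wxt$, contributing one to each root degree and one vertex), followed by a series of dissections and edges attached along the outer path back to the second vertex (generating function $S(x,t)$, with $t$ recording the root edge at the second vertex). For $A_2$ and $S_2$ one uses the standard series--parallel / chain decomposition of a dissection as a sequence of ``bricks'', and splits according to the location of the auxiliary root $v$: inside the first brick, at the articulation vertex joining the first brick to the rest, or inside the remaining chain — these are exactly the three summands written for $A_2$ and $S_2$. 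This produces a linear system in $A_2(x,w,t)$ and $S_2(x,1,t)$ (note $v$-degree bookkeeping forces the middle summand of $A_2$ to use $S(x)$ and $S_2(x,1,t)$ rather than their full $w$-refinements), which is solved by elementary elimination using the already known $A(x,w)$, $S(x,w)$, $A_1$; the denominators $1-xw(2A+1)$, $1-xt(2A+1)$ and $1-x(4A+3)$ that appear are precisely the determinants arising in this elimination, and the polynomials $P_1,P_2$ collect the residual numerator.

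The main obstacle is purely bookkeeping: getting the auxiliary-root placement in the chain decomposition exactly right, in particular ensuring that the degree of $v$ is counted once and only once when $v$ sits at an articulation vertex shared by two consecutive bricks, and that the ``first brick'' versus ``rest of the chain'' split does not double count. Once the system is set up correctly, deriving the closed forms for $A_2(x,1,t)$ and $S_2(x,1,t)$ — and then for the general $A_2(x,w,t)$ — is a routine (if lengthy) algebraic manipulation, best carried out with a computer algebra system; I would present only the resulting expressions, as the statement does. The final substitution into $B^{\bullet\bullet} = wt + \tfrac12 A_1 + \tfrac12 A_2$ is then immediate.
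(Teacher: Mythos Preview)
Your proposal is correct and follows exactly the approach the paper takes: the paper does not give a separate proof of this lemma, but rather sets up the recursive equations for $A_1$, $A_2$, $S_2$ from the chain decomposition of a dissection (with the one-line justification that the three summands correspond to $v$ lying in the first brick, at the articulation vertex, or in the remaining chain), solves the resulting linear system explicitly, and then reads off $B^{\bullet\bullet}=wt+\tfrac12 A_1+\tfrac12 A_2$. Your write-up is essentially a more detailed narrative of that same derivation; the only cosmetic slip is the phrase ``after removing the root and cutting the outer cycle'' --- the root is not removed but becomes the first root-edge endpoint of the dissection --- but this does not affect the argument.
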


%=============================================
\subsection{Connected outerplanar graphs}
%=============================================

We recall that in many classes of graphs, including outerplanar,
series-parallel and planar graphs, a recursive relation holds
between the generating functions $B(x)$ and $C(x)$ of $2$-connected
and connected graphs, namely:
\begin{equation} \label{eqCpx}
C'(x) = e^{B'(xC'(x))}.
\end{equation}
This follows from the block decomposition of a connected graph; see,
for instance,~\cite{BGKN-series-parallel}. This relation can be
extended to the following one.

\begin{lemma}\label{lem:CpandCpp-outerplanar-combinatorics}
Let $C^\bullet(x,w)$ and $C^{\bullet\bullet}(x,w,t)$ be the generating
functions of rooted and double rooted connected outerplanar graphs,
where the exponents of $w$ and $t$ count the degree of the first and
second root. Then, we have
\[
C^\bullet(x,w) = e^{ B^\bullet(x C'(x),w)}
\]
and
\begin{align*}
C^{\bullet\bullet}(x,w,t) &=
 \frac{x}{(xC'(x))'} \frac{\partial}{\partial x}C^\bullet(x,w)
\frac{\partial}{\partial x} C^\bullet(x,t) \\
& +  B^{\bullet\bullet}(x C'(x),w,t)   C^\bullet(x,w)C^\bullet(x,t).
\end{align*}
\end{lemma}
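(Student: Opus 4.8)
The plan is to derive both formulas from the block decomposition of a connected graph, tracking the degree of the root(s) through the composition. Recall that a rooted connected graph decomposes as a set of $2$-connected blocks attached at the root, where each block is itself rooted at that vertex and its other vertices carry pendant rooted connected graphs; this is exactly the combinatorial content of $C'(x) = e^{B'(xC'(x))}$. The key observation is that the degree of the root in the connected graph equals the \emph{sum} of its degrees inside the individual blocks hanging off it, and since $w$ is an exponential-type variable marking degree, a set construction turns into exponentiation while the marking variable $w$ is simply carried along unchanged. Concretely, a single rooted block with its pendant trees is enumerated by $B^\bullet(xC'(x),w)$ (the substitution $x\mapsto xC'(x)$ accounts for the rooted connected graphs hanging at the non-root block vertices, as in the classical identity), and taking the exponential over the set of such blocks at the root yields $C^\bullet(x,w) = e^{B^\bullet(xC'(x),w)}$. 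One should check the base cases and that $C^\bullet(x,1) = e^{B^\bullet(xC'(x),1)} = e^{B'(xC'(x))} = C'(x)$, which is consistent.

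For the double-rooted formula, the plan is to split according to whether the two roots $v_1, v_2$ lie in a common block or not. If they lie in a common block $D$, then removing $D$ leaves pendant rooted connected graphs at every vertex of $D$ (including possibly at $v_1$ and $v_2$), plus the rest of the connected graph grows from $v_1$ and $v_2$ exactly as in the single-rooted case; this contributes $B^{\bullet\bullet}(xC'(x),w,t)\, C^\bullet(x,w)\, C^\bullet(x,t)$, where the two $C^\bullet$ factors encode the blocks hanging off $v_1$ and off $v_2$ outside $D$, and $B^{\bullet\bullet}$ already incorporates the pendant connected graphs at the internal vertices of $D$ via the substitution. If $v_1$ and $v_2$ lie in different blocks, then there is a unique path of blocks joining them, and the standard way to handle this is to observe that $v_2$ sits somewhere in the connected graph grown from $v_1$: differentiating $C^\bullet(x,w)$ with respect to $x$ marks (and un-weights) a non-root vertex, and one wants that marked vertex to be the second root carrying variable $t$. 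The factor $\frac{x}{(xC'(x))'}$ is the standard correction that appears when one points at a vertex that is reached through the substitution $x \mapsto xC'(x)$ — it is the reciprocal of the derivative of the substitution, times $x$, converting a $\partial_x$ acting on the composed function into a genuine vertex-pointing operator at the right level; the product $\partial_x C^\bullet(x,w)\,\partial_x C^\bullet(x,t)$ then corresponds to marking $v_1$ with degree variable $w$, following the block path, and marking $v_2$ with degree variable $t$.

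The cleanest way to carry this out rigorously is to first establish the relation for the ordinary generating functions — i.e., prove or quote the analogue $C''(x) = \frac{x}{(xC'(x))'}(C'(x))'^2 \cdot 1 + B''(xC'(x))(C'(x))^2$ type identity for double-rooted connected graphs, which is the specialization at $w=t=1$ — and then verify that attaching the degree variables $w$ and $t$ to the appropriate roots is compatible with every operation used (substitution, product, exponential, $\partial_x$). Since $w$ and $t$ mark degrees and degree is additive over blocks at a shared vertex but localized to a single block for a non-shared vertex, each of the two terms picks up exactly the marking structure claimed. One should double-check the specialization: setting $w=t=1$ must return the known formula for $C^{\bullet\bullet}(x,1,1) = C''(x)$, which serves as a sanity check on the coefficient $\frac{x}{(xC'(x))'}$ and on the absence of extra cross terms.

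The main obstacle is the different-blocks case and in particular justifying the factor $\frac{x}{(xC'(x))'}$ cleanly: one has to be careful that differentiating $\partial_x C^\bullet(x,t) = \partial_x e^{B^\bullet(xC'(x),t)}$ pulls down a factor $(xC'(x))'$ from the chain rule acting on the inner argument, and this is precisely what must be divided out so that the pointed vertex is a vertex of the \emph{block-substituted} structure rather than an artifact of the substitution; the remaining $x$ restores the proper exponential-generating-function weighting of that vertex. A careful bijective or generating-function argument — ideally phrased in terms of pointing operators $x\,\partial_x$ and the substitution lemma for compositions of exponential generating functions — will make this transparent, and the same bookkeeping will appear verbatim in the series-parallel case later in the paper.
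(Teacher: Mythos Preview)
Your overall strategy—block decomposition for $C^\bullet$, then a case split on whether the two roots share a block for $C^{\bullet\bullet}$—is exactly what the paper does, and your treatment of the shared-block case is correct.

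The different-blocks case, which you flag as the main obstacle, is where your argument is genuinely incomplete. You try to justify the factor $\tfrac{x}{(xC'(x))'}$ directly as a ``correction'' to $\partial_x C^\bullet(x,w)\,\partial_x C^\bullet(x,t)$, invoking a path of blocks between the two roots; this is the wrong direction and is why it feels hard. The paper's missing idea is much more local: among the blocks incident to the first root, exactly one is distinguished (the unique block on the way to the second root), and inside that block exactly one non-root vertex is distinguished (the articulation vertex leading onward). A rooted block with one additional marked non-root vertex is encoded by $x\,\tfrac{\partial}{\partial x} B^\bullet(x,w)$; substitute $x\mapsto xC'(x)$ at every non-marked vertex, and at the marked articulation vertex attach an $x\,\tfrac{\partial}{\partial x} C^\bullet(x,t)$ (a rooted connected graph with a further marked vertex, the second root). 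Together with the remaining set of blocks at the first root, this gives the different-blocks contribution
\[
e^{B^\bullet(xC'(x),w)}\;\Bigl[x\,\tfrac{\partial B^\bullet}{\partial x}(xC'(x),w)\Bigr]\;\tfrac{\partial}{\partial x} C^\bullet(x,t).
\]
Only \emph{after} obtaining this combinatorially transparent expression does one apply the chain rule $\tfrac{\partial}{\partial x}C^\bullet(x,w)=C^\bullet(x,w)\,\tfrac{\partial B^\bullet}{\partial x}(xC'(x),w)\,(xC'(x))'$ to rewrite it as $\tfrac{x}{(xC'(x))'}\,\partial_x C^\bullet(x,w)\,\partial_x C^\bullet(x,t)$. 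So the factor you were struggling to interpret is an artefact of algebraic simplification, not something to be argued for directly; deriving the unsimplified form first removes the obstacle entirely.
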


\begin{proof}

The equation for $C^\bullet(x,w)$ is the natural extension of Equation
(\ref{eqCpx}), since the degree of the root vertex is the sum of the
degrees of the root vertices of the blocks incident to it.

\begin{figure}[htp] \centering
\includegraphics[width=0.8\hsize]{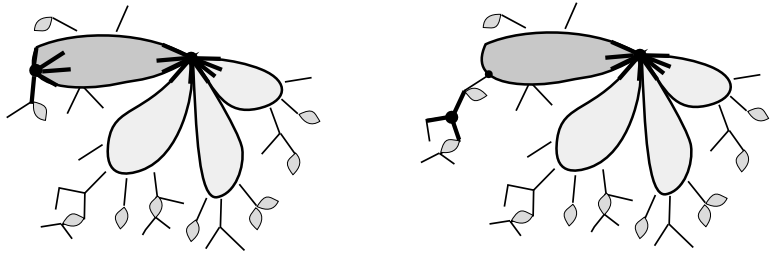}
\caption{Block decomposition of a connected graph with two roots, either
sharing a common block (left), or not (right).}
          \label{f:tworoots-connected}
\end{figure}

Next we consider double rooted graphs enumerated by $C^{\bullet
  \bullet}(x,w,t)$. Here we distinguish two situations, depending on
whether the two roots are in the same block or not (see
Figure~\ref{f:tworoots-connected}). In the former case, we have a
block with two roots, so a term $B^{\bullet \bullet}(x,w,t)$
appears. Each non-root vertex of this block is replaced by a rooted
connected graph in $xC'(x)$, while the second root is replaced by
$C^\bullet(x,t)$, since this replacement contributes to the degree of
the second root. Thus, the generating function of double rooted
connected graphs where the two roots are in the same block is
\[ e^{ B^\bullet(x C'(x),w)}B^{\bullet\bullet}(x C'(x),w,t) C^\bullet(x,t). \]

If the roots are in different blocks, it is still true that one of the
blocks incident to the first root is distinguished (the unique block
leading to the second root), and so is one of its vertices (the
articulation vertex leading to the second root). So this time the
decomposition has a rooted block with an additional distinguished
vertex, that is, a term $x\frac{\partial}{\partial x}
B^\bullet(x,w)$. Every vertex is replaced by $xC'(x)$, except for the
distinguished one, which is replaced by $x\frac{\partial}{\partial
  x}C^\bullet(x,t)$, that is, a rooted connected graph with an
additional root.

Hence, the generating function of $C^{\bullet \bullet}(x,w,t)$ is given by
\begin{align*}
C^{\bullet\bullet}(x,w,t) &=  e^{ B^\bullet(x C'(x),w)}
x \frac{\partial}{\partial x} B^\bullet(xC'(x),w)
\frac{\partial}{\partial x} C^\bullet(x,t) \\
& +  e^{ B^\bullet(x C'(x),w)}B^{\bullet\bullet}(x C'(x),w,t) C^\bullet(x,t)\\
 &=
 \frac{x}{(xC'(x))'} \frac{\partial}{\partial x}C^\bullet(x,w)
\frac{\partial}{\partial x} C^\bullet(x,t) \\
& +  B^{\bullet\bullet}(x C'(x),w,t)   C^\bullet(x,w)C^\bullet(x,t).
\end{align*}

\end{proof}

%In order to obtain information on $d_{n,k}$ and $d_{n,k,\ell}$ (now
%for connected outerplanar graphs we have to discuss the functions
%\[
%C_k(x) = [w^k] C^\bullet(x,w)
%\]
%and
%\[
%C_{k,\ell}(x) = [w^kt^\ell] C^{\bullet\bullet}(x,w,t)
%\]
%Since all involved functions are explicit, it is not too difficult
%to obtain asymptotics. We start with $C_k(x)$ and observe that

Note that in the outerplanar case the
functions $C^\bullet(x,w)$ and $C^{\bullet\bullet}(x,w,t)$ are
explicit in terms of $C'(x)$. For example, we have
\begin{equation}\label{eqCxw}
C^\bullet(x,w) = e^{xC'(x)w + a(x)w^2/(1-wb(x))},
\end{equation}
where
\begin{align*}
a(x) &= \frac 12 x^2 C'(x)^2(1 + 2A(xC'(x))),\\
b(x) &= xC'(x)(1 + 2A(xC'(x))).
\end{align*}
This is due to the fact that we have an explicit expression for
$B^\bullet(x,w)$, which is not true in general.

%=============================================
%=============================================
\section{Outerplanar graphs: asymptotics}\label{secOutAsymp}
%=============================================
%=============================================

%The aim of this section is to describe several situations
%where the assumptions of Theorem~\ref{Th1} can be directly
%checked by observing special {\it shapes} of multivariate
%generating functions.

In this section we analyze the singularities of the multivariate
generating functions derived in the previous section. By studying
the ``shape'' of these functions when $x,w$ and $t$ get close to the
relevant singularities we derive asymptotic estimates for the number
$d_{n,k}$ and $d_{n,k,\ell}$, as $n,k,\ell \to \infty$ in a suitable
range. By applying the Master Theorem discussed in the Introduction
we obtain a precise estimate for the maximum degree of outerplanar
graphs.

%=============================================
\subsection{$2$-Connected Outerplanar Graphs}
%=============================================

In Propositions \ref{Pro1} and \ref{Pro2} we obtain asymptotic
estimates for the numbers $d_{n,k}$ and $d_{n,k,\ell}$.

\begin{prop}\label{Pro1}
Let $d_{n,k}$ denote the probability that a randomly selected vertex
in a $2$-connected outerplanar graph with $n$ vertices has degree
$k$. Then we have uniformly for $k\le C \log n$
\begin{equation}\label{eqPro11}
d_{n,k} = 2 k(\sqrt 2 - 1)^k
\left(1 + O\left( \frac 1k \right) \right).
\end{equation}
Furthermore, we have uniformly for all $n,k\ge 1$
\begin{equation}\label{eqPro12}
d_{n,k}= O(\overline q^ k)
\end{equation}
for some real $\overline q$ with $0< \overline q < 1$.
\end{prop}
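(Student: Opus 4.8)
The plan is to extract the coefficients $d_{n,k}$ from the explicit generating function
\[
B^\bullet(x,w) = xw + \frac{xw^2}{2}\,\frac{y(x)}{1-y(x)w},
\qquad y(x) = x(2A(x)+1),
\]
via a double Cauchy integral, taking advantage of the fact that $y(x)$ is a power series of the square-root type with a known singularity. Concretely, $d_{n,k} = [x^{n-1}w^k]\,B^\bullet(x,w)\big/[x^{n-1}]\,B'(x)$, so first I would compute the coefficient of $w^k$ exactly: for $k\ge 2$ it is $\tfrac12\,x\,y(x)^{k-1}$ (plus the $xw$ contribution only at $k=1$). Thus for $k\ge 2$,
\[
[x^{n-1}w^k]B^\bullet(x,w) = \tfrac12\,[x^{n-2}]\,y(x)^{k-1}.
\]
The singularity of $y$ is at $x_0 = 3-2\sqrt2$ (the smaller root of $1-6x+x^2$), and from \eqref{eq:D} one reads $g(x_0) = y(x_0) = x_0(2A(x_0)+1) = \sqrt2 - 1$. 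So $1/y(x_0) = \sqrt2+1$, and the factor $(\sqrt2-1)^k$ in \eqref{eqPro11} is exactly $y(x_0)^k$; the linear factor $k$ and the constant $2$ will come out of the saddle-point/Hankel analysis. The denominator $[x^{n-1}]B'(x) = [x^{n-1}]\tfrac{1+5x-\sqrt{1-6x+x^2}}{8}$ has the standard square-root asymptotics $\sim C\,x_0^{-n} n^{-3/2}$ by singularity analysis, which I would record once.

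For the main asymptotic statement \eqref{eqPro11}, the key step is to estimate $[x^{n-2}]\,y(x)^{k-1}$ uniformly for $k\le C\log n$. Since $y(x) = g(x) - h(x)\sqrt{1-x/x_0}$, raising to the power $k-1$ and expanding gives $y(x)^{k-1} = g(x)^{k-1}\bigl(1 - \tfrac{h(x)}{g(x)}\sqrt{1-x/x_0}\bigr)^{k-1}$. The dominant contribution to the $n$-th coefficient comes from near $x=x_0$; writing $X = \sqrt{1-x/x_0}$ and applying Flajolet--Odlyzko transfer theorems along a Hankel contour, the term $-\,(k-1)\,\tfrac{h(x_0)}{g(x_0)}\,X$ contributes the $n^{-3/2}$ singularity, and the overall power $g(x_0)^{k-1}$ gives the geometric factor. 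Care is needed because $k$ grows with $n$: one must check that $g(x)^{k-1}$ does not move the relevant saddle away from $x_0$ — this is fine since $g$ is analytic and nonzero in a full neighborhood of $x_0$, so $g(x)^{k-1}$ is bounded above and below by $g(x_0)^{k-1}$ times factors $1+O(|x-x_0|)$ on the contour, and the Hankel contour localizes at distance $O(1/n)$ from $x_0$. Collecting the pieces, $[x^{n-2}]y(x)^{k-1} \sim (k-1)\,\tfrac{h(x_0)}{g(x_0)}\cdot(\text{const})\cdot g(x_0)^{k-1} x_0^{-n} n^{-3/2}$, and dividing by $[x^{n-1}]B'(x)$ the $x_0^{-n}n^{-3/2}$ cancels, leaving $d_{n,k} \sim (\text{const})\cdot k\,(\sqrt2-1)^{k}$. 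Matching constants — here one uses $h(x_0)/g(x_0)$ and the explicit leading coefficient of $\sqrt{1-6x+x^2}$ in $B'$ — produces the factor $2$, and the error is uniformly $O(1/k)$ because the next term in the binomial expansion of $(1-\tfrac hg X)^{k-1}$ is down by a factor $k\cdot X \sim k/\sqrt n$, which after coefficient extraction is $O(1/n)$ relative, hence in particular $O(1/k)$ on the range $k\le C\log n$.

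The bound \eqref{eqPro12} is the easy part: I want $d_{n,k} = O(\overline q^{\,k})$ uniformly in $n$. From the exact formula, $d_{n,k} = \tfrac12\,[x^{n-2}]y(x)^{k-1}\big/[x^{n-1}]B'(x)$ for $k\ge 2$. Since $y(x)$ has nonnegative coefficients and radius of convergence $x_0$, for any fixed $\rho$ with $0<\rho<x_0$ we have $[x^{n-2}]y(x)^{k-1} \le \rho^{-(n-2)}\,y(\rho)^{k-1}$ by the trivial coefficient bound. Meanwhile $[x^{n-1}]B'(x) \ge c_0\,x_0^{-n}n^{-3/2}$ for a fixed $c_0>0$ and all large $n$ (and is positive for all $n\ge 1$), so $d_{n,k} \le C_1\,(x_0/\rho)^n\,n^{3/2}\,y(\rho)^{k-1}$. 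This is not yet of the form $O(\overline q^k)$ because of the $n$-dependence, so the right move is instead to choose $\rho$ depending on nothing and bound more carefully: take $\rho = x_0$ is not allowed, but we can note that $y(\rho)<y(x_0)\cdot(1+o(1))$ and, crucially, that since the full series $B^\bullet(x,w)$ has a genuine singularity in $w$ only at $w=1/y(x)\ge 1/y(x_0) = \sqrt2+1$, the coefficients $[w^k]$ decay at rate at least $(y(x_0)+\epsilon)^k = (\sqrt2-1+\epsilon)^{-1\cdot(-1)}$... cleanest is: for any $w_1$ with $y(x_0)<1/w_1$, i.e. $w_1<\sqrt2+1$, and any $x$ with $|x|\le x_0$, the function $B^\bullet(x,w_1)$ is analytic, and since probabilities satisfy $\sum_k d_{n,k}w_1^k\le$ (ratio of coefficients of an analytic function), we get $d_{n,k}w_1^k = O(1)$ uniformly in $n$, hence $d_{n,k}=O(\overline q^{\,k})$ with $\overline q = 1/w_1 = \sqrt2-1+\epsilon$. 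I expect the main obstacle to be the uniformity in $k$ in step two — verifying that the Hankel-contour estimates and the binomial tail bounds hold with constants independent of $k$ throughout the range $k\le C\log n$ — which is presumably exactly what the technical lemmas of Appendix~B (Lemmas~\ref{Le3}, \ref{Le4}) are designed to supply, so I would invoke them at the appropriate point rather than redo the contour integral by hand.
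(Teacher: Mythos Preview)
Your approach is correct and closely related to the paper's, but with a worthwhile shortcut. The paper simply verifies that $B^\bullet(x,w)$ has the form $G(x,X,w)/(1-y(x)w)$ of Lemma~\ref{Le3} and reads off all the constants ($x_0=3-2\sqrt2$, $g(x_0)=\sqrt2-1$, $h(x_0)$, $G(x_0,0,1/g(x_0))$, etc.) to plug into the conclusions of that lemma. You instead exploit the fact that the $w$-dependence is a pure geometric series, so $[w^k]B^\bullet(x,w)=\tfrac12\,x\,y(x)^{k-1}$ exactly for $k\ge2$, reducing the bivariate coefficient extraction to the univariate ``large powers'' problem $[x^{n}]\,y(x)^{k}$ for a square-root function $y$. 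This is a genuine simplification here and is essentially what the double Hankel contour in the proof of Lemma~\ref{Le3} does internally once the $w$-integral is evaluated; your route avoids the $w$-contour altogether. The trade-off is that the shortcut is specific to this proposition: for the later cases (connected outerplanar, series-parallel) the $w$-dependence is not a simple geometric series, which is why the paper packages everything into the general Lemmas~\ref{Le3}--\ref{Le8}.

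One correction to your argument for \eqref{eqPro12}: for fixed $w_1<\sqrt2+1$ the function $B^\bullet(x,w_1)$ is \emph{not} analytic at $x=x_0$ --- it still carries the square-root singularity of $y(x)$. The right statement is that, since $1-y(x)w_1$ stays bounded away from $0$ for $|x|\le x_0$, the function $B^\bullet(x,w_1)$ has the \emph{same} square-root singularity at $x_0$ as $B'(x)=B^\bullet(x,1)$; hence $[x^{n-1}]B^\bullet(x,w_1)$ and $[x^{n-1}]B'(x)$ are both $\asymp x_0^{-n}n^{-3/2}$ with comparable constants, giving $\sum_k d_{n,k}w_1^{\,k}=O(1)$ uniformly in $n$ and thus $d_{n,k}=O(w_1^{-k})$. (Equivalently, this is exactly the content of \eqref{eqLe32} in Lemma~\ref{Le3}.) Also, your error-term bookkeeping is slightly off --- the $j=2$ term of the binomial expansion of $(1-\tfrac{h}{g}X)^{k-1}$ is an even power of $X$ and hence analytic, so the next singular correction is the $j=3$ term, of relative size $O(k^2/n)$ on the range $k\le C\log n$; the $O(1/k)$ in \eqref{eqPro11} then comes simply from replacing $k-1$ by $k$ in the leading factor.
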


In order to prove the above result we need to perform singularity
analysis on the generating function $B^\bullet(x,w)$ given in
Lemma~\ref{Leop21}. The precise technical result we need is the
following.

\begin{lemma}\label{Le3}
Let $f(x,w) = \sum_{n,k} f_{n,k} x^n w^k$ be a bivariate generating
function of non-negative numbers $f_{n,k}$, and suppose that
$f(x,w)$ can be represented as
\begin{equation}\label{eqLe31}
f(x,w) = \frac{G(x,X,w)}{1-y(x)w},
\end{equation}
where $X=\sqrt{1-x/x_0}$, $y(x)$ is a power series with non-negative
coefficients of the square-root type as in (\ref{eqyx}),
\[ y(x) = g(x) - h(x)\sqrt{1-x/x_0}, \]
%
%above
%which has an analytic continuation ot $D(x_0,\varepsilon)$ for some $\varepsilon>0$ and  with
%a local representation of the form (\ref{eqyx})
%with analytic function $g(x)$, $h(x)$ with
%$g(x_0)\ne 0$, $h(x_0)\ne 0$ ,
and the function $G(x,v,w)$ is analytic in the region
\[
D' = \{(x,v,w)\in \mathbb{C}^3 : |x|< x_0 + \eta, |v| < \eta, |w| < 1/g(x_0) + \eta\}
\]
for some $\eta > 0$, and satisfies $G(x_0,0,1/g(x_0))\ne 0$.

Then we have uniformly for $k \le C \log n$ (with an arbitrary constant $C > 0$)
\begin{equation}\label{eqLe31-asmp-exp}
f_{n,k} =  \frac{G(x_0,0,1/g(x_0)) h(x_0)}{2 \sqrt{\pi}} \,
g(x_0)^{k-1} x_0^{-n} k\, n^{-\frac 32} \left( 1 + O\left( \frac 1k \right) \right).
\end{equation}
Moreover, for every $\varepsilon > 0$ we have uniformly for all $n,k\ge 0$
\begin{equation}\label{eqLe32}
f_{n,k} = O\left( (g(x_0)+\varepsilon)^{k}   x_0^{-n} n^{-\frac 32} \right).
\end{equation}

If $g(x_0) < 1$, then $f_n = \sum_k f_{n,k}$ is given asymptotically
by \begin{equation}\label{eqLe33} f_n = \frac 1{2\sqrt\pi} \left(
\frac{h(x_0)G(x_0,0,1)}{(1-g(x_0))^2}  -
\frac{G_v(x_0,0,1)}{1-g(x_0)} \right) x_0^{-n} n^{-\frac 32} \left(
1 + O\left( \frac 1n \right) \right),
\end{equation}
and for every $k$ the limit
\[
\overline d_k = \lim_{n\to\infty} \frac{f_{n,k}}{f_n} \]
exists.
\end{lemma}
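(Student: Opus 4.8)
The plan is to extract the coefficients $f_{n,k}$ from the representation \eqref{eqLe31} by a two-step process: first treat $f(x,w)$ as a function of $x$ for fixed $w$, extract the $x^n$ coefficient by singularity analysis, and then analyze the resulting sequence in $k$. For fixed $w$ with $|w| < 1/g(x_0)$, the function $x \mapsto f(x,w)$ has its dominant singularity at $x_0$, coming both from the square-root singularity of $y(x)$ and from the vanishing of $1-y(x)w$; but since $|w| < 1/g(x_0) = 1/y(x_0)$, the denominator does not vanish at $x_0$, so the only singularity at $|x| = x_0$ is the square-root one. Substituting $y(x) = g(x) - h(x)X$ with $X = \sqrt{1-x/x_0}$ into \eqref{eqLe31} and expanding in powers of $X$, one gets a local expansion of $f(x,w)$ of square-root type whose singular part has a coefficient that depends on $w$; applying the Flajolet--Odlyzko transfer theorem~\cite{FO} gives
\[
[x^n] f(x,w) = \frac{h(x_0)}{2\sqrt{\pi}}\,\frac{G(x_0,0,w)}{(1-g(x_0)w)^2}\, x_0^{-n} n^{-3/2}\,(1 + O(1/n)),
\]
uniformly for $w$ in a fixed compact subset of the disk $|w| < 1/g(x_0)$ (one has to be slightly careful with the $w$-dependence of the error term, but it is uniform on compacts).

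\emph{Then I would} extract the $w^k$ coefficient of the leading term, i.e.\ of $G(x_0,0,w)/(1-g(x_0)w)^2$. This is a function of $w$ analytic in $|w| < 1/g(x_0) + \eta$ except for the double pole at $w = 1/g(x_0)$; its $k$-th Taylor coefficient is therefore governed by that double pole, giving
\[
[w^k]\,\frac{G(x_0,0,w)}{(1-g(x_0)w)^2} = G(x_0,0,1/g(x_0))\,(k+1)\,g(x_0)^k\,(1 + O((g(x_0)/(g(x_0)+\eta))^k)),
\]
so that up to the claimed relative error the $k$-dependence is $k\,g(x_0)^{k-1}$. Combining the two extractions — and justifying that one may interchange them, i.e.\ that the $O(1/n)$ error from the transfer theorem, once one also takes $[w^k]$, still contributes only a relative $O(1/k)$ (respectively $O(1/n)$) term — yields \eqref{eqLe31-asmp-exp}. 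The bound \eqref{eqLe32} follows more cheaply: for each fixed $n$ one integrates $f(x,w)$ in $w$ along a circle of radius $1/(g(x_0)+\varepsilon)$ (on which $|y(x)w| < 1$ uniformly for $x$ near $x_0$) to get $|f_{n,k}| \le (g(x_0)+\varepsilon)^k \sup_{|w| = 1/(g(x_0)+\varepsilon)} |[x^n] f(x,w)|$, and the sup is $O(x_0^{-n} n^{-3/2})$ by the same transfer-theorem argument, uniformly in $w$ on that circle.

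For the last part, assuming $g(x_0) < 1$, I would sum \eqref{eqLe31} over $k$, which amounts to setting $w = 1$: since $g(x_0) < 1$ the point $w = 1$ lies inside the disk of analyticity away from the pole $w = 1/g(x_0) > 1$, so $f(x,1) = G(x,X,1)/(1 - y(x))$ is again of square-root type in $x$ at $x_0$. Expanding $1/(1-y(x)) = 1/(1 - g(x_0) + h(x_0)X + \cdots)$ to first order in $X$ and multiplying by $G(x,X,1) = G(x_0,0,1) + O(X)$, the coefficient of $X$ in $f(x,1)$ is exactly $h(x_0)G(x_0,0,1)/(1-g(x_0))^2 - G_v(x_0,0,1)/(1-g(x_0))$, and the transfer theorem gives \eqref{eqLe33}. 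Existence of $\overline d_k = \lim_n f_{n,k}/f_n$ is then immediate: both $f_{n,k}$ and $f_n$ have asymptotics of the form $(\text{const})\,x_0^{-n} n^{-3/2}(1+o(1))$, so the ratio tends to the ratio of the constants, with $\overline d_k = h(x_0)G(x_0,0,1/g(x_0))\,k\,g(x_0)^{k-1}\big/\big(h(x_0)G(x_0,0,1)/(1-g(x_0))^2 - G_v(x_0,0,1)/(1-g(x_0))\big)$.

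\emph{The main obstacle} will be making the double coefficient extraction rigorous \emph{with uniform error terms in the stated range $k \le C\log n$}: the transfer theorem for $[x^n]$ must be applied with constants that do not degrade as $w$ ranges over a compact set, and then when one takes $[w^k]$ of an asymptotic expansion in $n$ one must control how the $n$-error interacts with the $k$-extraction — in particular that the Hankel-contour estimates for the $x$-integral hold uniformly for $w$ on a circle of radius slightly below $1/g(x_0)$, and that the remainder in the geometric/pole expansion in $w$ is genuinely exponentially small compared to $g(x_0)^k$. This is the kind of two-variable Cauchy-integral bookkeeping that the paper defers to Appendix~B, and it is where all the real care lies; the algebraic identification of the leading constant is routine once the analytic framework is in place.
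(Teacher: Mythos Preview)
Your overall strategy---bivariate singularity analysis via Cauchy integrals, identifying the critical point $(x_0,w_0)$ with $w_0=1/g(x_0)$, and reading off the leading constants from the local expansion of $1/(1-y(x)w)$---is exactly what the paper does. The treatment of $f_n$ via $w=1$ and of the uniform bound \eqref{eqLe32} via a circle $|w|=1/(g(x_0)+\varepsilon)$ is also the same.

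There is one point where your write-up is not yet rigorous and differs from the paper's execution. You extract $[x^n]f(x,w)$ by the transfer theorem \emph{uniformly on a fixed compact subset of $|w|<w_0$}, and then take $[w^k]$ of the resulting leading term $L(w)$. But uniform convergence of $F_n(w)/(x_0^{-n}n^{-3/2})$ to $L(w)$ on compacta only gives $[w^k]F_n(w)\sim[w^k]L(w)\cdot x_0^{-n}n^{-3/2}$ for each \emph{fixed} $k$; it does not control $k\to\infty$. Indeed, the next terms in the $X$-expansion of $f(x,w)$ carry poles of increasing order at $w_0$, so the $O(1/n)$ remainder in your transfer step hides a function whose $[w^k]$ grows like $k^3 g(x_0)^k$, and the bookkeeping $k^2/n\to 0$ for $k\le C\log n$ must be made explicit. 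The paper handles this by \emph{not} separating the two extractions: it writes one double Cauchy integral and uses a Hankel contour in $w$ at distance $1/k$ from $w_0$ (paired with the usual Hankel contour in $x$ at distance $1/n$ from $x_0$), so that the interaction of the two small scales is tracked directly. You correctly flag this as ``the main obstacle'', but your proposed fix---Hankel estimates for the $x$-integral uniformly for $w$ on a circle slightly below $w_0$---would again only give bounds, not the asymptotic; the $w$-contour must itself go around $w_0$.

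Two minor algebraic points: your intermediate formula for $[x^n]f(x,w)$ drops a factor of $w$ (the coefficient of $X$ in $1/(1-y(x)w)$ is $-h(x_0)w/(1-g(x_0)w)^2$) and omits the simple-pole term $G_v(x_0,0,w)/(1-g(x_0)w)$ coming from the $X$-expansion of $G$. Neither affects the leading $k$-asymptotics (the missing $w$ evaluates to $w_0=1/g(x_0)$, and the simple pole is $O(1/k)$ relative to the double pole), but the second term is exactly what produces the $G_v$ contribution in \eqref{eqLe33} when you set $w=1$.
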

\noindent The proof of Lemma~\ref{Le3} is given in Appendix~B.

We proceed to prove Proposition~\ref{Pro1}. We remind the reader
that the limit $\overline d_k = \lim_{n\to\infty} d_{n,k}$ exists
for all \emph{fixed} $k$, and that (see \cite{DGN2})
\[
\overline d_k = 2 (k-1)(\sqrt 2 - 1)^k.
\]

\begin{proof}[Proof of Proposition \ref{Pro1}]

Recall that the $d_{n,k}$ are encoded in the function
$B^\bullet(x,w)$ (see Lemma~\ref{Leop21}). Next observe that
$B^\bullet(x,w)$ has precisely the form of $f(x,w)$ in
Lemma~\ref{Le3}. In particular, we have that
\[
y(x) = x(2A(x) + 1) =  \frac{1-x-\sqrt{1-6x+x^2}}{2}
\]
is a power series with non-negative coefficients that admits the local
expansion
\begin{align*}
y(x) &= g(x) - h(x) \sqrt{1-\frac x{x_0}} \\
g(x) &= \frac{1-x}{2}\\
h(x) &= \frac{1}{2} \sqrt{1-x x_0},\\
\end{align*}
with $x_0=3-2\sqrt{2}$, and that $G(x,X,w)$ is given by
\[
 G(x,X,w) = xw+\frac{xw}4\left(xw-w+x\sqrt{1-x x_0}X\right).
\]
Hence, we just need to compute the evaluations
\begin{align*}
g(x_0) &= \sqrt{2}-1\\
h(x_0) &= \sqrt{3\sqrt{2}-4} \\
G(x_0, 0, 1/g(x_0)) &= \frac{3-2\sqrt{2}}{2(\sqrt{2}-1)} \\
G(x_0, 0, 1) &= \frac12(13-9\sqrt{2}) \\
G_v(x_0, 0, 1) &= \frac12(3-2\sqrt{2})\sqrt{3\sqrt{2}-4}
\end{align*}
to obtain the asymptotics for $f_{n,k}$ and $f_n$, in accordance
with Equations~(\ref{eqLe31-asmp-exp}) and~(\ref{eqLe33}). Now it is
just a matter of computation to check that
\[
d_{n,k} = \frac{f_{n,k}}{f_n} =  2 k(\sqrt 2 - 1)^k
\left(1 + O\left( \frac 1k \right) \right),
\]
as required by (\ref{eqPro11}). Also, (\ref{eqPro12}) follows
immediately from (\ref{eqLe32}).
\end{proof}

Next we turn to the case of double rooted graphs.

\begin{prop}\label{Pro2}
Let $d_{n,k,\ell}$ denote the probability that two different (and
ordered) randomly selected vertices in a $2$-connected outerplanar
graph with $n$ vertices have degrees $k$ and $\ell$. Then we have
uniformly for $2\le k,\ell \le C \log n$
\begin{equation}\label{eqPro13}
d_{n,k,\ell} = 4k\ell\,(\sqrt 2 - 1)^{k+\ell}
\left( 1 + O\left( \frac 1k + \frac 1\ell \right) \right).
\end{equation}
Furthermore, we have uniformly for all $n,k\ge 1$
\begin{equation}\label{eqPro14}
d_{n,k,\ell}= O(\overline q^ {k+\ell}),
\end{equation}
for some real number $\overline q$ with $0< \overline q < 1$.
\end{prop}

Again we need a  precise technical result, to be proved in Appendix~B.

\begin{lemma}\label{Le4}
Let $f(x,w,t) = \sum_{n,k,\ell} f_{n,k,\ell} x^n w^k t^\ell$ be a
triple generating function of non-negative numbers $f_{n,k,\ell}$,
and assume that $f(x,w,t)$ can be represented as
\begin{equation}\label{eqLe41}
f(x,w,t) = \frac{G(x,X,w,t)}{X\,(1-y(x)w)^2(1-y(x)t)^2},
\end{equation}
where $X = \sqrt{1-x/x_0}$, $y(x)$ is a power series of the
square-root type as in (\ref{eqyx}), and $G(x,v,w,t)$ is
non-zero and analytic at $(x,0,w,t) = (x_0,0,1/g(x_0),1/g(x_0))$.
%with a local representation of the form
%(\ref{eqyx}).

Then we have uniformly for $k,\ell \le C \log n$ (with an arbitrary constant $C > 0$)
\[
f_{n,k,\ell} =  \frac{G(x_0,0,1/g(x_0),1/g(x_0))}{\sqrt{\pi}} \,
g(x_0)^{k+\ell} x_0^{-n} k\,\ell\, n^{-\frac 12} \left( 1 + O\left( \frac 1k + \frac 1\ell \right) \right).
\]
Moreover, for every $\varepsilon > 0$ we have uniformly for all $n,k\ge 0$
\begin{equation}\label{eqLe34}
f_{n,k,\ell} = O\left( (g(x_0)+\varepsilon)^{k+\ell} x_0^{-n} n^{-\frac 12} \right).
\end{equation}

If $g(x_0)<1$, then $f_n = \sum_{k,\ell} f_{n,k,\ell}$ is given
asymptotically by
\[
f_n = \frac{G(x_0,0,1,1)}{\sqrt{\pi}(1-g(x_0))^4 } x_0^{-n}  n^{-\frac 12} \left( 1 + O\left( \frac 1n \right)\right)
\]
and for every pair $(k,\ell)$ the limit
\[
d_{k,\ell} = \lim_{n\to\infty} \frac{f_{n,k,\ell}}{f_n} \]
exists.
\end{lemma}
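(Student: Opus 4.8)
The plan is to extract the coefficient $f_{n,k,\ell}$ from the representation in~\eqref{eqLe41} by means of a multivariate Cauchy integral, first in the variables $w$ and $t$ (which only contribute double poles at $w=1/y(x)$ and $t=1/y(x)$), and then in $x$ via singularity analysis near the square-root singularity $x_0$. Since $G(x,X,w,t)$ is analytic in a polydisc around $(x_0,0,1/g(x_0),1/g(x_0))$, I would fix $x$ in a small sector of the slit disc $D(x_0,\varepsilon)$, and compute
\[
[w^k t^\ell] f(x,w,t) = \frac{1}{X}\,[w^k t^\ell]\frac{G(x,X,w,t)}{(1-y(x)w)^2(1-y(x)t)^2}.
\]
Expanding $1/(1-y(x)w)^2 = \sum_{j\ge 0}(j+1)y(x)^j w^j$ and likewise in $t$, and writing $G$ as a (convergent) power series in $w$ and $t$ around the critical values, the dominant contribution as $k,\ell\to\infty$ comes from evaluating $G$ at $w=1/y(x)$, $t=1/y(x)$: one gets
\[
[w^k t^\ell] f(x,w,t) = \frac{G(x,X,1/y(x),1/y(x))}{X}\,k\,\ell\, y(x)^{k+\ell}\left(1+O\!\left(\tfrac1k+\tfrac1\ell\right)\right),
\]
the lower-order terms in $k,\ell$ coming from the derivatives of $G$ in $w,t$ and from the $(j+1)$ versus $j$ discrepancy, all of which are uniformly $O(1/k + 1/\ell)$ relative to the main term because $G$ and its partials are bounded on the relevant compact set. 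This reduces the problem to the univariate coefficient asymptotics of $H_{k,\ell}(x) := k\ell\, G(x,X,1/y(x),1/y(x))\, y(x)^{k+\ell}/X$.

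Next I would apply Flajolet--Odlyzko transfer theorems to $H_{k,\ell}(x)$. Using $y(x) = g(x) - h(x)X$ with $X=\sqrt{1-x/x_0}$, we have $y(x)^{k+\ell} = g(x)^{k+\ell}\bigl(1 - (h(x)/g(x))X\bigr)^{k+\ell}$; since $k,\ell \le C\log n$, the exponent $k+\ell$ grows only logarithmically, so $(1-(h/g)X)^{k+\ell} = 1 - (k+\ell)(h(x)/g(x))X + O((k+\ell)^2 X^2)$ is a genuine perturbation on the scale $X \sim n^{-1/2}$, and the correction is absorbed into the $O(1/k+1/\ell)$ (indeed into a smaller error, since $(k+\ell)X = O(\log n\cdot n^{-1/2})$). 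Thus the singular behaviour of $H_{k,\ell}$ at $x_0$ is governed by $g(x_0)^{k+\ell}\,G(x_0,0,1/g(x_0),1/g(x_0))\,k\ell\,/X$, i.e. a singularity of type $(1-x/x_0)^{-1/2}$. The transfer theorem then gives $[x^n]H_{k,\ell}(x) \sim \dfrac{G(x_0,0,1/g(x_0),1/g(x_0))}{\sqrt\pi}\,g(x_0)^{k+\ell}\,k\ell\,x_0^{-n}n^{-1/2}$, which is exactly the claimed formula for $f_{n,k,\ell}$. To make the $w,t$-extraction and the $x$-extraction simultaneously uniform in $k,\ell \le C\log n$, I would carry out a single multivariate Cauchy integral over Hankel-type contours: radius $x_0(1+1/n)$ type contour for $x$ near $x_0$, and circles of radius slightly less than $g(x_0)^{-1}$ for $w$ and $t$, so that the error bounds hold uniformly; this is the standard double-Hankel machinery used for $B^\bullet$ in the paper, adapted to a double pole.

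For the uniform bound~\eqref{eqLe34}, I would choose the $w$- and $t$-contours of radius $(g(x_0)+\varepsilon)^{-1}$ (legitimate since for $x$ in a fixed disc $|y(x)| < g(x_0)+\varepsilon/2 < (g(x_0)+\varepsilon)^{-1}$ is false in general, so more precisely: pick the contour radius $\rho$ with $g(x_0) < 1/\rho$ and $\rho < $ reciprocal of $\sup|y|$ on the $x$-contour, giving a geometric factor $\rho^{-k}\rho^{-\ell} \le (g(x_0)+\varepsilon)^{k+\ell}$), combined with the standard $O(x_0^{-n}n^{-1/2})$ bound from the $X^{-1}$ singularity, and crudely bounding the boundedly-many analytic factors. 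Finally, for the $f_n$ asymptotics, I would sum $f_{n,k,\ell}$ — or, cleaner, set $w=t=1$ directly in~\eqref{eqLe41} and apply singularity analysis to $G(x,X,1,1)/(X(1-y(x))^4)$, which under $g(x_0)<1$ has its dominant singularity at $x_0$ of type $(1-x/x_0)^{-1/2}$ with leading coefficient $G(x_0,0,1,1)/(1-g(x_0))^4$, yielding the stated formula; existence of $d_{k,\ell} = \lim_n f_{n,k,\ell}/f_n$ is then immediate for fixed $k,\ell$ since both numerator and denominator are coefficients of functions with a $(1-x/x_0)^{-1/2}$ singularity, so the ratio converges to the ratio of singular coefficients. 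The main obstacle I expect is organising the two-stage coefficient extraction so that all error terms are genuinely uniform over the full range $k,\ell \le C\log n$ with a single constant $C$ arbitrary — in particular controlling the interaction between the geometric growth $g(x_0)^{-(k+\ell)}$ (which would blow up on the $w,t$-contours if they were taken too large) and the square-root singularity in $x$; this is precisely why the Hankel contour for $x$ must be pushed to distance $\asymp 1/n$ from $x_0$ while the $w,t$-contours sit just inside their critical radii, and verifying that the cross-terms on these contours are negligible is the delicate computation deferred to Appendix~B.
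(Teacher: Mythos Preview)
Your proposal is correct and close in spirit to the paper's proof, but the organization differs in a way worth noting. The paper (following the proof of Lemma~\ref{Le3}) treats all three variables simultaneously via a single Cauchy integral, using Hankel-type contours in $x$, $w$ and $t$ centred at $x_0$ and at the fixed point $w_0 = 1/g(x_0)$; it then writes down a local expansion of $f(x,w,t)$ near $(x_0,w_0,w_0)$ whose leading term is
\[
\frac{G(x_0,0,w_0,w_0)}{X\,(1-w/w_0)^2(1-t/w_0)^2},
\]
and reads off the asymptotics from this term alone, estimating the remaining pieces on the Hankel contours exactly as in Lemma~\ref{Le3}.

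You instead exploit the fact that the $w$- and $t$-singularities are \emph{poles}: you extract $[w^kt^\ell]$ essentially in closed form via $(1-y(x)w)^{-2}=\sum_j (j+1)y(x)^jw^j$, obtaining a univariate function of $x$ with a $(1-x/x_0)^{-1/2}$ singularity, and then apply Flajolet--Odlyzko transfer. This is a legitimate shortcut that works precisely because the $w,t$-singularities are polar; it would not carry over directly to the essential singularity of Lemma~\ref{Le5} or the branch singularity of Lemma~\ref{Le7}, whereas the paper's uniform Hankel-contour machinery does. Conversely, your route makes the separation of the $k\ell$ factor from the $n^{-1/2}$ factor more transparent and avoids having to track cross-terms between the three contours.

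One small point: your discussion of the uniform bound~\eqref{eqLe34} is slightly garbled (the inequality you flag as ``false in general'' is a red herring). The clean statement is: choose $\rho = 1/(g(x_0)+\varepsilon)$ for the $w$- and $t$-circles; on the Hankel contour for $x$ one has $|y(x)| \le g(x_0)+\varepsilon/2 < 1/\rho$, so the denominator $(1-y(x)w)^2(1-y(x)t)^2$ stays bounded away from zero, and the trivial Cauchy bound gives the claimed estimate. This is exactly how the paper handles the corresponding step in Lemma~\ref{Le3}.
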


\begin{proof}[Proof of Proposition \ref{Pro2}]

Recall that $d_{n,k,\ell}$ are encoded in the function
$B^{\bullet\bullet}(x,w,t)$, which is given explicitly in
Lemma~\ref{Leop22}. The result follows from a direct application of
Lemma~\ref{Le4}, since $B^{\bullet\bullet}(x,w,t)$ is exactly of the
form $f(x,w,t)$, with the same $y(x)=x(2A(x)+1)$ and $x_0 =
3-2\sqrt{2}$, as in the proof of Lemma~\ref{Le3}. Indeed, the
factors $(1-xw(2A+1))^2$ and $(1-xt(2A+1))^2$ in the denominator of
$B^{\bullet\bullet}(x,w,t)$ become $(1-y(x)w)^2$ and $(1-y(x)t)^2$
in $f(x,w,t)$, and the factor $(1-x(4A+3))$ transforms into
$\sqrt{1-x x_0}X$ (the term $\sqrt{1-x x_0}$ is analytic and
contributes to $G(x,X,w,t)$).

We just need to compute the evaluations
\begin{align*}
G(x_0, 0, 1/g(x_0), 1/g(x_0)) &= \dfrac{\sqrt2}{2\sqrt{3\sqrt{2}-4}} \\
G(x_0, 0, 1, 1) &= \dfrac{\sqrt2 (\sqrt{2}-1)^4}{2\sqrt{3\sqrt{2}-4}} \\
(1-g(x_0))^4 &= (2-\sqrt{2})^4
\end{align*}
and check that
\[ \dfrac{G(x_0,0,1/g(x_0), 1/g(x_0))}{G(x_0,0,1,1)(1-g(x_0))^4} = 4. \]
Hence,
\begin{align*}
d_{n,k,\ell} &= \frac{f_{n,k,\ell}}{f_n} =  4 k\ell
(\sqrt{2}-1)^{k+\ell} \left(1+O\left( \frac 1k + \frac 1\ell \right)
\right),
\end{align*}
as required by (\ref{eqPro13}). Also, (\ref{eqPro14}) follows
immediately from (\ref{eqLe34}).

%% Similarly (\ref{eqPro13}) and (\ref{eqPro14}) follow
%% from an application of Lemma~\ref{Le4} since the
%% generating function $B^{\bullet\bullet}(x,w,t)$
%% (which is given explicitly in Lemma~\ref{Leop22}) has
%% the requested form. We just note that
%% $1-x(4A+3) = \sqrt{1-6x+x^2}$.

%% Finally observe that
%% $d_{n,k,\ell} \sim d_{n,k}d_{n,\ell}$.
\end{proof}

\begin{theo}\label{Th2COP}
Let $\Delta_n$ denote the maximum degree of a random
labelled $2$-connected outerplanar graph with $n$ vertices.
Then
\[
\frac{\Delta_n}{\log n} \to \frac 1{\log (\sqrt 2 +1)} \qquad\mbox{in probability}
\]
and
\[
\mathbb{E}\, \Delta_n \sim \frac{\log n}{\log (\sqrt 2 +1)}
\qquad (n\to\infty).
\]
\end{theo}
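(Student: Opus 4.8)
The plan is to deduce Theorem~\ref{Th2COP} directly from the Master Theorem (Theorem~\ref{Th1}) by verifying its three hypotheses for the class of labelled $2$-connected outerplanar graphs, with the constant $q = \sqrt2 - 1$ (so that $1/\log(1/q) = 1/\log(\sqrt2+1)$). First I would recall from \cite{DGN2} that the limiting degree distribution exists and equals $\overline d_k = 2(k-1)(\sqrt2-1)^k$; taking logarithms gives $\log \overline d_k = k\log(\sqrt2-1) + \log(k-1) + \log 2 \sim k\log(\sqrt2-1)$ as $k\to\infty$, which is exactly condition~1 of Theorem~\ref{Th1} with $q = \sqrt2-1 \in (0,1)$.

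Next I would check condition~2, the asymptotic independence. This is precisely where Propositions~\ref{Pro1} and~\ref{Pro2} do the work. By Proposition~\ref{Pro1}, uniformly for $k \le C\log n$ we have $d_{n,k} = 2k(\sqrt2-1)^k(1+O(1/k))$, and since $\overline d_k = 2(k-1)(\sqrt2-1)^k = 2k(\sqrt2-1)^k(1-1/k)$, we get $d_{n,k}/\overline d_k \to 1$ uniformly in the required range, giving $d_{n,k}\sim\overline d_k$. Similarly, Proposition~\ref{Pro2} gives $d_{n,k,\ell} = 4k\ell(\sqrt2-1)^{k+\ell}(1+O(1/k+1/\ell))$ uniformly for $2\le k,\ell\le C\log n$, and the product $\overline d_k \overline d_\ell = 4(k-1)(\ell-1)(\sqrt2-1)^{k+\ell}$ is asymptotically $4k\ell(\sqrt2-1)^{k+\ell}$ in the same range, so $d_{n,k,\ell}\sim\overline d_k\overline d_\ell$. (One should note the minor bookkeeping point that condition~2 of Theorem~\ref{Th1} requires the estimate as $k,\ell\to\infty$, which is compatible with the ranges $k\le C\log n$ and $2\le k,\ell\le C\log n$ since $n\to\infty$.) Finally, condition~3 is immediate: equations~(\ref{eqPro12}) and~(\ref{eqPro14}) state that $d_{n,k} = O(\overline q^k)$ and $d_{n,k,\ell} = O(\overline q^{k+\ell})$ uniformly for all $n,k,\ell$, for some $\overline q\in(0,1)$.

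With all three hypotheses verified, Theorem~\ref{Th1} applies and yields directly
\[
\frac{\Delta_n}{\log n} \to \frac 1{\log(1/q)} = \frac 1{\log(\sqrt2+1)}
\quad\text{in probability}
\]
and $\mathbb{E}\,\Delta_n \sim \frac{1}{\log(\sqrt2+1)}\log n$, using that $1/q = 1/(\sqrt2-1) = \sqrt2+1$. This is exactly the statement of Theorem~\ref{Th2COP}.

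There is no serious obstacle remaining at this stage: the analytic heavy lifting has already been carried out in Lemmas~\ref{Le3} and~\ref{Le4} (proved in Appendix~B) and packaged into Propositions~\ref{Pro1} and~\ref{Pro2}, and the probabilistic first/second moment argument is encapsulated in Theorem~\ref{Th1} (proved in Appendix~A). The only thing one must be careful about is matching the hypotheses of Theorem~\ref{Th1} exactly — in particular that the asymptotic estimates for $d_{n,k}$ and $d_{n,k,\ell}$ hold \emph{uniformly} up to $C\log n$ for \emph{every} $C>0$, which is precisely the form in which Propositions~\ref{Pro1} and~\ref{Pro2} are stated, and that the uniform tail bounds of condition~3 are genuinely uniform in $n$. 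So the proof is a short verification that chains together the already-established pieces.
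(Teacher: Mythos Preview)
Your proposal is correct and matches the paper's own proof essentially line for line: the paper also derives Theorem~\ref{Th2COP} by verifying conditions~1--3 of Theorem~\ref{Th1} via \cite{DGN2} and Propositions~\ref{Pro1} and~\ref{Pro2}, noting that $d_{n,k,\ell}\sim\overline d_k\overline d_\ell$ follows by comparing the two asymptotic estimates. Your write-up is in fact slightly more detailed than the paper's in spelling out why $d_{n,k}\sim\overline d_k$ and $d_{n,k,\ell}\sim\overline d_k\overline d_\ell$.
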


\begin{proof}
The proof is an application of Theorem~\ref{Th1} to the class of
2-connected outerplanar graphs. Condition 1 of the theorem is a
direct consequence of either the asymptotics $\overline d_k \sim
ck^{\alpha}q^k $ derived in \cite{DGN2}, or the asymptotics from
$d_{n,k}$ of Proposition~\ref{Pro1}. Conditions 2 and 3 follow from
Proposition~\ref{Pro1} (for the $d_{n,k}$), and
Proposition~\ref{Pro2} (for the $d_{n,k,\ell}$). The condition
$d_{n, k, \ell} \sim \overline d_k \overline d_\ell$ is easily
verified from both asymptotic estimates.
\end{proof}

\begin{rem}
Prior to the proof of Proposition~\ref{Pro1}, we mentioned that
$\overline d_k = \lim_{n\to\infty} d_{n,k} = 2 (k-1)(\sqrt 2 -
1)^k$. This relation can be verified easily by considering the
generating function
\[
\overline p(w) = \sum_{k\ge 2} \overline d_k w^k = \lim_{n\to\infty} \frac{[x^n] B^\bullet(x,w)}{[x^n] B'(x)}.
\]
By setting \[
H(x,w,z) = xw +  {xw^2 \over 2} {4z-3x \over 1 -
  (4z-3x)w}
\]
we have $B^\bullet(x,w) = H(x,w,B'(x))$, and consequently, by
\cite[Lemma 3.1]{DGN2}, \[ \overline p(w) = H_z(3-2\sqrt
2,w,B'(3-2\sqrt 2)) = \frac{2 w^2} {(1+\sqrt{2}-w)^2},
\]
from where it follows the explicit expression for $\overline d_k$ for $k\ge
2$. Similarly we can analyze $B^{\bullet\bullet}(x,w,t)$.  Define
$\overline d_{k,\ell} = \lim_{n\to\infty} d_{n,k,\ell}$ and
\[
\overline p(w,t) = \sum_{k,\ell \ge 2} \overline d_{k,\ell}
w^k t^\ell = \lim_{n\to\infty} \frac{[x^n] B^{\bullet\bullet}(x,w,t)}{[x^n] B''    (x)}.
\]
The analytic situation is a little bit different from the univariate one.
The asymptotic {\it leading term} comes from the factor
$1/(1-x(4A+3))  = 1/\sqrt{1-6x+x^2}$. Hence it follows that
\begin{align*}
\overline p(w,t) & = \left[\dfrac{\dfrac{xw^2t^2(1+x(2A+1))(P_1 + x(wt-w-t)P_2)}
{2(1-xw(2A+1))^2(1-xt(2A+1))^2}}{\dfrac{x(1+x(2A+1))(P_1 -xP_2)}
{2(1-x(2A+1))^2(1-x(2A+1))^2}}\right]_{x = 3-2\sqrt 2, A = 1/\sqrt 2} \\
&= \frac{2 w^2} {(1+\sqrt{2}-w)^2} \frac{2 t^2} {(1+\sqrt{2}-t)^2} = \overline p(w) \overline p(t).
\end{align*}
It particular it follows that $\overline d_{k,\ell} = \overline d_k \overline d_\ell$ for
all $k,\ell \ge 2$. The interpretation of this relation is that
the degrees of the two randomly chosen vertices are independent
in the limit (which is not unexpected).
Furthermore, this relation provides an {\it automatic proof} that
$d_{n,k,\ell} \sim \overline d_k \overline d_\ell$.

%%of $d_{n,k,\ell}$ does not depend on $n$. This would imply
%%$d_{n,k,\ell}\sim \overline d_{k,\ell}$ for $n,k,\ell \to infty$,
%%so that the leading constants necessarily match.
\end{rem}

%=============================================
\subsection{Connected Outerplanar Graphs}
%=============================================

%For the proof we will again apply Theorem~\ref{Th1}.

Again we need asymptotic expansions for $d_{n,k}$ and
$d_{n,k,\ell}$, but in this case it is a little more involved due
to the presence of essential singularities in the associated
generating functions. We recall that $C^\bullet(x,w)$ and
$C^{\bullet\bullet}(x,w,t)$ are explicit in terms of $C'(x)$.

%Thus, we have to study $C'(x)$ first.
%{eqCxw}
%In particular this leads to the following asymptotic relation for
The numbers $c_n$ of vertex labelled outerplanar graphs satisfy
\cite{BGKN-series-parallel}
\begin{align*}
c_n &=c\cdot n^{-\frac 52} \rho^{-n} n! \left( 1+ O\left( \frac
1n\right)\right),
\end{align*}
where $\rho = v_0 e^{-B'(v_0)} \approx 0.136594$ is the radius of
convergence of $C(x)$ and  $v_0  \approx 0.170765$ satisfies the
equation $1 = v_0 B''(v_0)$. Furthermore, $C'(x)$ has a local
representation of the form (\ref{eqyx}).

\begin{prop}\label{Pro3}
Let $d_{n,k}$ denote the probability that a randomly selected
vertex in a connected outerplanar graph with $n$ vertices
has degree $k$. Then we have as $n,k\to\infty$ and
uniformly for $k\le C \log n$
\begin{equation}\label{eqPro31}
d_{n,k} \sim \overline d_k,
\end{equation}
where $\overline d_k$ denotes the asymptotic degree distribution of
connected outerplanar graphs encoded by the generating function
\begin{align*}
\overline p(w) &= \sum_{k\ge 1} \overline d_k w^k \\
&=\rho\frac{v_0^2(2A(v_0) +1)(2A(v_0) +1 +2v_0A'(v_0))w^2}
{2(1- v_0(2A(v_0)+1)w)^2} \\
&\qquad\times \exp\left( v_0 w + \frac{v_0^2(2A(v_0)+1)w^2 }
{2(1-v_0(2A(v_0)+1)w)}  \right)
\end{align*}
and is given asymptotically by
\[
\overline d_k \sim c_1 k^{ 1/4} e^{c_2 \sqrt k} q^k,
\]
where $c_1 \approx 0.667187$, $c_2 \approx 0.947130$.

Furthermore, we have uniformly for all $n,k\ge 1$
\begin{equation}\label{eqPro32}
d_{n,k}= O(\overline q^ k),
\end{equation}
for some real $\overline q$ with $0< \overline q < 1$.
\end{prop}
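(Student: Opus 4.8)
The plan is to extract the coefficients $d_{n,k}$ from the bivariate generating function $C^\bullet(x,w) = e^{B^\bullet(xC'(x),w)}$ of Lemma~\ref{lem:CpandCpp-outerplanar-combinatorics}, using the explicit form~(\ref{eqCxw}). The key structural fact is that $C'(x)$ itself is a power series of the square-root type with dominant singularity at $x=\rho$, so $a(x)$ and $b(x)$ in~(\ref{eqCxw}) inherit square-root local expansions $a(x)=a_g(x)-a_h(x)X$, $b(x)=b_g(x)-b_h(x)X$ with $X=\sqrt{1-x/\rho}$, where $b_g(\rho) = v_0(2A(v_0)+1)$ and $v_0=\rho C'(\rho)$. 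Writing $b_0 = b_g(\rho)$, the inner exponent $B^\bullet(xC'(x),w) = xC'(x)w + a(x)w^2/(1-wb(x))$ has, near $(x,w)=(\rho,1/b_0)$, a simple pole in $w$ coming from $1-wb(x)$. The contribution of $e^{B^\bullet}$ near that pole is therefore an \emph{essential singularity of the form $e^{\text{const}/(1-wb_0)}$} in $w$, which is why Hayman's method is needed (Lemmas~\ref{Le5},~\ref{Le6}).

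The steps I would carry out, in order. First, isolate the $x$-dependence: perform the Cauchy integral in $x$ along a Hankel contour around $x=\rho$; by the transfer theorems of~\cite{FO} applied to the square-root type singularity, $[x^n]$ of a function $F(x,w)$ with local shape $F = F_g(x,w) - F_h(x,w)X$ produces $f_n(w) \sim (2\sqrt\pi)^{-1} F_h(\rho,w)\,\rho^{-n} n^{-3/2}$, uniformly in $w$ on a suitable domain (this uniformity is exactly what Lemma~\ref{Le5} packages). Here $F(x,w)=C^\bullet(x,w)$, and one checks that $F_h(\rho,w)$ is, up to the prefactor, the right-hand side of the claimed formula for $\overline p(w)$ after dividing by the analogous expansion of $C'(x)$ (whose $n^{-5/2}\rho^{-n}$ asymptotics give $c_n/n!$); the $n^{-3/2}$ versus $n^{-5/2}$ discrepancy is accounted for by the extra factor of $n$ absorbed when one differentiates, i.e.\ by the relation $c_n = (n-1)!\,[\cdot]$ etc.\ — this is the routine bookkeeping I would not grind through. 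This yields~(\ref{eqPro31}) with $\overline p(w)$ as stated, and identifies $q = 1/b_0 = 1/(v_0(2A(v_0)+1))$ as the location of the $w$-singularity of $\overline p(w)$.

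Second, extract $[w^k]$ of $\overline p(w)$. Since $\overline p(w)$ has the shape $R(w)\exp(L(w) + M(w))$ with $R$ rational, $L$ linear, and $M(w)=c\,w^2/(1-w/q)$ having a simple pole at $w=q$, the function $w\mapsto \overline p(w)$ is Hayman-admissible near $w=q$; a saddle-point / Hayman analysis of $\exp(M(w))$ at the pole gives the stretched-exponential correction: the saddle $w_k$ solves $M'(w_k)\sim k$, so $q-w_k \asymp k^{-1/2}$, producing $\overline d_k \sim c_1 k^{1/4} e^{c_2\sqrt k} q^k$. The constants $c_1,c_2$ come out of the standard formulas for coefficients of $e^{c/(1-w/q)}$ near its singularity (these are classical, cf.\ the $e^{1/(1-z)}$ example in Hayman's paper); I would just record the evaluations. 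This is precisely Lemma~\ref{Le6}.

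Third, establish~(\ref{eqPro32}): the uniform bound $d_{n,k}=O(\overline q^k)$ for all $n,k$. This follows from the exponential part of Lemma~\ref{Le5}/\ref{Le6} the same way (\ref{eqPro12}) followed from~(\ref{eqLe32}): choosing any $\overline q$ with $q<\overline q<1$, one has $f_{n,k}=O(\overline q^k\,\rho^{-n} n^{-3/2})$ uniformly, hence $d_{n,k}=f_{n,k}/f_n = O(\overline q^k)$ uniformly in $n$. The main obstacle — and the reason this case is genuinely harder than the $2$-connected case — is securing the \emph{uniformity in $w$} of the $x$-transfer step on a region that reaches the coalescing point $(x,w)=(\rho,q)$, where the $X$-singularity and the $w$-pole interact; controlling the error term $O(1/k)$ (uniformly for $k\le C\log n$) requires choosing the Hankel contour radius and the $w$-contour radius coupled as functions of $n$ and $k$, which is the delicate content deferred to Appendix~B. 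Everything else is substitution into formulas already in place from Lemma~\ref{lem:CpandCpp-outerplanar-combinatorics} and the known asymptotics for $c_n$.
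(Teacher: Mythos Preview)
Your approach is essentially the paper's: verify that $C^\bullet(x,w)=e^{B^\bullet(xC'(x),w)}$ has the shape required by Lemma~\ref{Le5} (square-root type $y(x)=b(x)$ with $x_0=\rho$, essential singularity from the pole of the exponent at $w=1/b(\rho)$), then read off (\ref{eqPro31}) and (\ref{eqPro32}) directly from that lemma, with the form of $\overline p(w)$ and the asymptotics $\overline d_k\sim c_1 k^{1/4}e^{c_2\sqrt k}q^k$ already established in~\cite{DGN2}. Two slips to fix: the relevant technical lemma throughout is Lemma~\ref{Le5} (bivariate, single root), not Lemma~\ref{Le6} (trivariate, two roots); and you have inverted $q$ --- the $w$-singularity sits at $w_0=1/b_0$, while the exponential rate is $q=b_0=v_0(2A(v_0)+1)<1$, consistent with your later use of $q<\overline q<1$.
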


The corresponding technical result needed to prove the previous
proposition is the following.

\begin{lemma}\label{Le5}
Let $f(x,w) = \sum_{n,k} f_{n,k} x^n w^k$ be a bivariate generating
function of non-negative numbers $f_{n,k}$, and assume that $f(x,w)$
can be represented as
\begin{equation}\label{eqLe51}
f(x,w) = G(x,X,w) \exp\left(\frac{H(x,X,w)}{1-y(x)w}\right),
\end{equation}
where $X = \sqrt{1-x/x_0}$, $y(x)$ is a power series of the
square-root type as in (\ref{eqyx}), and the functions $G(x,v,w)$ and
$H(x,v,w)$ are non-zero and analytic at $(x,v,w) = (x_0,0,1/g(x_0))$.

Then we have uniformly for $k \le C \log n$ (with an arbitrary constant $C > 0$)
\begin{align*}
f_{n,k} &=  \frac{G\left(x_0,0,\frac 1{g(x_0)}\right) h(x_0) \,
H\left (x_0,0,\frac 1{g(x_0)}\right)^{\frac 14}}{4\pi} \,e^{\frac 12 H(x_0,0,1/g(x_0)) -
\frac 1{g(x_0)} H_w(x_0,0,1/g(x_0))}  \\
&\qquad \times g(x_0)^{k-1} x_0^{-n} k^{\frac 14} e^{2 \sqrt{Hx_0,0,1/g(x_0))\, k}} n^{-\frac 32} \left( 1 + O\left( \frac 1{\sqrt k} \right) \right).
\end{align*}
Moreover, for every $\varepsilon > 0$ we have uniformly for all $n,k\ge 0$
\[
f_{n,k} = O\left( (g(x_0)+\varepsilon)^{k}   x_0^{-n} n^{-\frac 32} \right).
\]

If $g(x_0) < 1$ then, $f_n = \sum_k f_{n,k}$ is given asymptotically
by \begin{align*} f_n &= \exp\left( \frac{H(x_0,0,1)}{1-g(x_0)}
\right) \left(  G(x_0,0,1) \left(
\frac{h(x_0)H(x_0,0,1)}{(1-g(x_0))^2}  -
\frac{H_v(x_0,0,1)}{1-g(x_0)}\right) - G_v(x_0,0,1) \right)\\
&\qquad \times x_0^{-n} n^{-\frac 32} \left( 1 + O\left( \frac 1n
\right) \right),
\end{align*}
and for every $k$ the limit
\[
\overline d_k = \lim_{n\to\infty} \frac{f_{n,k}}{f_n} \]
exists.
\end{lemma}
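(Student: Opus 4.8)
The plan is to extract the coefficient $f_{n,k} = [x^n w^k] f(x,w)$ by a two-step application of the Cauchy integral formula and singularity analysis, treating the $w$-variable first. Fix $x$ in the Flajolet--Odlyzko transfer region $D(x_0,\varepsilon)$ and write
\[
[w^k] f(x,w) = \frac{1}{2\pi i}\oint f(x,w)\,\frac{dw}{w^{k+1}}.
\]
The integrand $G(x,X,w)\exp\bigl(H(x,X,w)/(1-y(x)w)\bigr)$ has an essential singularity at $w=1/y(x)$, so a direct residue is not available; instead I would apply the saddle-point method (Hayman's method, as flagged in the introduction) in $w$. Near $w=1/y(x)$ the dominant factor is $\exp(H/(1-yw))$, and combined with $w^{-k-1}$ the phase is $H(x,X,w)/(1-y(x)w) - (k+1)\log w$. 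The saddle point $w_*=w_*(x,k)$ approaches $1/y(x)$ from below at rate $1-y(x)w_* \sim \sqrt{H(x,X,1/y(x))/k}$, which is the source of the $e^{2\sqrt{Hk}}$ growth and the $k^{1/4}$ polynomial correction; carrying the Gaussian integral through gives
\[
[w^k]f(x,w) \sim \frac{G\,H^{1/4}}{2\sqrt{\pi}\,k^{1/4}}\;y(x)^{-k}\;
\exp\!\Bigl(2\sqrt{H\,k}\Bigr)\exp\!\Bigl(-\tfrac12 H - \tfrac{1}{g}H_w\cdot(\text{lower order})\Bigr),
\]
with all of $G,H,H_w$ evaluated at the appropriate limiting arguments; the $-\tfrac12 H$ and $H_w$ terms in the stated formula come precisely from expanding $H(x,X,w)/(1-y(x)w)$ and $\log w$ to second order around $w_*$. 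One must check Hayman-admissibility of the $w$-section uniformly in $x$ near $x_0$ and in $k\le C\log n$, which is the kind of uniform estimate that makes this step delicate.

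Next I would substitute the resulting expression in $x$ and apply singularity analysis in $x$. Since $y(x)=g(x)-h(x)X$ has a square-root singularity at $x_0$ with $X=\sqrt{1-x/x_0}$, we expand $y(x)^{-k} = g(x)^{-k}(1-(h(x)/g(x))X)^{-k}$; for $k$ of order $\log n$ and $X$ of order $n^{-1/2}$ along the Hankel contour, $(h/g)\,kX$ is small, so $y(x)^{-k}\sim g(x)^{-k}\exp\bigl((h(x)/g(x))kX + \cdots\bigr)$, and similarly the slowly-varying factors $G, H, H^{1/4}$ are replaced by their values at $x=x_0$ up to relative error $O(X)=O(n^{-1/2})$. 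The genuine $x$-singularity is therefore the $\exp(c\,k X)$-type factor times $x_0^{-n}$-growth; its Hankel-contour integral contributes $x_0^{-n}n^{-3/2}$ together with a constant depending on $h(x_0)$ — this is what produces the $h(x_0)$ prefactor and the $n^{-3/2}$ in the final formula. The uniform error term $1+O(1/\sqrt k)$ results from combining the $O(1/\sqrt k)$ from the $w$-saddle expansion with the $O(1/k)$ and $O(n^{-1/2})$ absorbed from the $x$-analysis (and $\sqrt k \le \sqrt{C\log n} = o(\sqrt n)$ keeps the former dominant).

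For the uniform bound $f_{n,k}=O((g(x_0)+\varepsilon)^k x_0^{-n}n^{-3/2})$ over all $n,k$, I would argue more crudely: fix $w=1/(g(x_0)+\varepsilon)$ (so $|y(x)w|<1$ uniformly on $|x|\le x_0+\varepsilon'$, shrinking $\varepsilon'$), whence $[w^k]f(x,w)\le (g(x_0)+\varepsilon)^k\,\max_{|x|\le x_0+\varepsilon'}|f(x,1/(g(x_0)+\varepsilon))|$, and then apply standard singularity analysis in $x$ to the one-variable function $x\mapsto f(x,1/(g(x_0)+\varepsilon))$, which again has only the square-root singularity of $y(x)$ at $x_0$, giving the $x_0^{-n}n^{-3/2}$ factor. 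For the statement about $f_n=\sum_k f_{n,k}$ when $g(x_0)<1$: here $1/y(x_0)=1/g(x_0)>1$, so setting $w=1$ is legitimate and $f(x,1)=G(x,X,1)\exp(H(x,X,1)/(1-y(x)))$ is an ordinary square-root-type singular function of $x$ alone; expanding $1/(1-y(x))$ via $y(x)=g(x)-h(x)X$ and applying the transfer theorem yields the displayed asymptotics with the bracketed constant, where the two terms come from differentiating $\exp(H/(1-y))$ (chain rule producing $H_v$ and, through $y'$, the $h(x_0)H/(1-g)^2$ term) and from the $G_v$ contribution of the $X$-linear part of $G$. Finally, existence of $\overline d_k=\lim_n f_{n,k}/f_n$ for each fixed $k$ follows because for fixed $k$ the function $[w^k]f(x,w)$ is again a square-root-type function of $x$ with singularity at $x_0$ (no saddle point needed — $k$ is bounded), so $f_{n,k}\sim (\text{const}_k)\,x_0^{-n}n^{-3/2}$, and dividing by the asymptotics of $f_n$ gives a finite limit.

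The main obstacle I expect is making the $w$-saddle-point analysis uniform simultaneously in $x$ (ranging over a Hankel contour approaching $x_0$, where $y(x)$, hence the location of the $w$-singularity, moves and $X\to 0$) and in $k\le C\log n$; in particular one must control the interaction between the two coalescing singularities — the square root of $y(x)$ at $x_0$ and the essential singularity at $w=1/y(x)$ — and verify that the error terms from the $w$-integration remain $O(1/\sqrt k)$ after the subsequent $x$-integration rather than degrading. This is exactly the role of the technical lemmas relegated to Appendix~B, and it is where Hayman's method must be invoked carefully rather than as a black box.
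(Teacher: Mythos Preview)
Your overall strategy --- Hayman saddle-point in $w$ to handle the essential singularity $\exp(H/(1-yw))$, combined with square-root singularity analysis in $x$ --- is the right one, and is what the paper uses. But the paper organises the double extraction differently, and that reorganisation is exactly what dissolves the uniformity obstacle you identify at the end.

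Rather than performing the $w$-saddle first for each fixed $x$ and then analysing the resulting function of $x$, the paper expands the \emph{integrand} $f(x,w)$ in powers of $X=\sqrt{1-x/x_0}$ before integrating. Since $y(x)=g(x)-h(x)X$, one has
\[
\frac{1}{1-y(x)w}=\frac{1}{1-w/w_0}+\frac{h(x_0)w_0\,X}{(1-w/w_0)^2}+O\!\left(\frac{X^2}{|w-w_0|^3}\right),
\]
and hence the $X$-linear part of $f(x,w)$ carries an \emph{extra} factor $(1-w/w_0)^{-2}$ multiplying the exponential. The $x$-Hankel integral then reduces to $[x^n]X\sim -\tfrac{1}{2\sqrt\pi}\,x_0^{-n}n^{-3/2}$, and the $w$-integral is Hayman applied to $e^{H/(1-w/w_0)}/(1-w/w_0)^2$, which gives $k^{+1/4}$ directly. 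The two integrations decouple, so no uniform-in-$x$ saddle estimate is needed.

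Your sequential route is workable, but the bookkeeping has slipped. The pure Hayman coefficient of $e^{H/(1-yw)}$ is $\sim \tfrac{H^{1/4}}{2\sqrt\pi}\,k^{-3/4}\,y(x)^{k}\,e^{2\sqrt{Hk}}$ (exponent $-3/4$, not $-1/4$; and $y(x)^{k}$, not $y(x)^{-k}$, since the $w$-radius is $1/y(x)$). The promotion to $k^{+1/4}$ then comes from the $X$-expansion of $y(x)^{k}=g(x)^{k}(1-(h/g)kX+\cdots)$, which contributes a factor $k$ that you do not make explicit --- you write that the Hankel integral of the ``$\exp(ckX)$-type factor'' gives $x_0^{-n}n^{-3/2}$, but in the regime $kX=O(\log n/\sqrt n)\to 0$ it gives $k$ times $[x^n]X$. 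With these corrections your computation matches the paper's. Your treatment of $f_n$ and of the existence of $\overline d_k$ agrees with the paper.
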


\begin{proof}[Proof of Proposition~\ref{Pro3}]

The existence of $\overline d_k$, the probability generating
function $\overline p(w)$ encoding them, and the asymptotic
expression $\overline d_k \sim c_1 k^{1/4} e^{c2 \sqrt{k}} q^k$,
have been derived in \cite{DGN2}. It only remains to show the
asymptotic relations (\ref{eqPro31}) and (\ref{eqPro32}); these will
follow from an application of Lemma~\ref{Le5} to
$f(x,w)=C^\bullet(x,w)$ with $x_0 = \rho$, the radius of convergence
of $C'(x)$.

Indeed, recall from Equation~(\ref{eqCxw}) that $C^\bullet(x,w)$ is of
the form
\[ C^\bullet(x,w) = \exp\(xC'(x)w\) \exp\(\frac{a(x)w^2}{1-wb(x)}\),
\]
with
\begin{align*}
 a(x) &= \frac12 x^2C'(x)^2(1+2A(xC'(x))), \\
 b(x) &= xC'(x)(1+2A(xC'(x))).
\end{align*}
Note that $C'(x)$ is not analytic at $x=\rho$. Thus, we must use a
local representation $C'(x)=\overline g(x)-\overline
h(x)\sqrt{1-x/\rho}$ of the form~(\ref{eqyx}) in order to obtain the
analytic expressions $A(x,v,w)$ and $G(x,v,w)$. In contrast with the
2-connected case, we are evaluating the function $A(x)$ at a point
$\rho C'(\rho)$ smaller that its radius of convergence
$3-2\sqrt{2}$, so that both $A(xC'(x))$ and
%
%The asymptotic relations (\ref{eqPro31}) and (\ref{eqPro32}) follow from
%(\ref{eqCxw}) and Lemma~\ref{Le5} by setting
\[
y(x) = b(x) = xC'(x)\left( 1 + 2 A(xC'(x)) \right),
\]
admit a local representation of the form (\ref{eqyx}).

Hence, we can apply Lemma~\ref{Le5} and deduce (\ref{eqPro31}) and
(\ref{eqPro32}). Note that the asymptotic expansion for the
$\overline d_k$ is derived from two sources, on the one side from
asymptotic estimates on the coefficients of the PGF $\overline
p(w)$, as in \cite{DGN2}, and on the other side from the limit
$f_{n,m}/f_n$ from Lemma~\ref{Le5}. As expected, both asymptotic
expansions coincide.
\end{proof}

The estimates for double rooted graphs come next, together with
the associated technical result.

\begin{prop}\label{Pro4}
Let $d_{n,k,\ell}$ denote the probability that two different
(and ordered) randomly selected
vertices in a connected outerplanar graph with $n$ vertices
have degrees $k$ and $\ell$.
Then we have for $n,k,\ell\to\infty$ and uniformly for
$2\le k,\ell \le C \log n$
\begin{equation}\label{eqPro41}
d_{n,k,\ell} \sim \overline d_k \overline d_\ell,
\end{equation}
where $\overline d_k$ denotes the asymptotic degree distribution of
connected outerplanar graphs.
Furthermore, we have uniformly for all $n,k\ge 1$
\begin{equation}\label{eqPro42}
d_{n,k,\ell}= O(\overline q^ {k+\ell})
\end{equation}
for some real $\overline q$ with $0< \overline q < 1$.
\end{prop}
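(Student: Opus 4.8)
The plan is to prove Proposition~\ref{Pro4} by the same mechanism used for Proposition~\ref{Pro2}, but with an extra layer of bookkeeping caused by the exponential (essential) singularity. The starting point is the expression for $C^{\bullet\bullet}(x,w,t)$ given in Lemma~\ref{lem:CpandCpp-outerplanar-combinatorics}, which has two summands: the ``same block'' term, which is (essentially) $B^{\bullet\bullet}(xC'(x),w,t)$ multiplied by $C^\bullet(x,w)C^\bullet(x,t)$, and the ``different blocks'' term, which is a product of partial derivatives of $C^\bullet(x,w)$ and $C^\bullet(x,t)$ divided by $(xC'(x))'$. In both summands the factor $C^\bullet(x,w)$ carries the exponential singularity $\exp(H/(1-y(x)w))$ analysed in Lemma~\ref{Le5}, so I expect $C^{\bullet\bullet}(x,w,t)$ to be of the form
\[
C^{\bullet\bullet}(x,w,t) = \frac{G(x,X,w,t)}{X^\alpha (1-y(x)w)^{\beta}(1-y(x)t)^{\beta}}
\exp\!\left(\frac{H(x,X,w)}{1-y(x)w} + \frac{H(x,X,t)}{1-y(x)t}\right)
\]
for suitable small exponents $\alpha,\beta$, with $y(x)=b(x)=xC'(x)(1+2A(xC'(x)))$, $x_0=\rho$, and $G,H$ analytic and non-zero at the critical point $(x_0,0,1/g(x_0),1/g(x_0))$. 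First I would verify this by inspecting the two summands separately: the ``same block'' term inherits its $w$-$t$ singular part from $B^{\bullet\bullet}$ (which, by the proof of Proposition~\ref{Pro2}, has denominator $X(1-y w)^2(1-y t)^2$ after the $1-x(4A+3)$ factor is converted to $\sqrt{1-xx_0}\,X$), while the ``different blocks'' term is built from $\partial_x C^\bullet(x,w)$, each of which contributes one more power of $(1-y(x)w)$ in the denominator and one factor of $h(x)/X$ from differentiating $y(x)$; one checks that after collecting everything the two summands can be written over a common denominator of the stated shape, and crucially that the numerator $G$ is non-vanishing at the critical point.

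The next step is to state and invoke the analogue of Lemmas~\ref{Le4} and~\ref{Le5} for this mixed shape — presumably this is Lemma~\ref{Le6} referred to in the introduction, whose proof (via Hayman's method in the $w$ and $t$ directions and a Hankel contour in $x$) is deferred to Appendix~B. Applying that lemma gives
\[
d_{n,k,\ell} = \frac{f_{n,k,\ell}}{f_n} \sim \overline d_k\,\overline d_\ell
\]
provided one checks that the constant prefactor in the asymptotics of $f_{n,k,\ell}$, divided by that of $f_n = [x^n]C''(x)$, factors as the product of the two univariate prefactors from Proposition~\ref{Pro3}. As in the 2-connected case, the cleanest way to see this factorisation is the ``automatic proof'' observation in the Remark after Theorem~\ref{Th2COP}: once one knows $\overline d_{k,\ell}=\lim_n d_{n,k,\ell}$ exists and equals $\overline d_k \overline d_\ell$ (which follows from \cite[Lemma~3.1]{DGN2} applied to $C^{\bullet\bullet}$, exactly mirroring the 2-connected computation), the uniform estimate from the technical lemma upgrades this pointwise limit to the uniform relation~(\ref{eqPro41}) on the range $2\le k,\ell\le C\log n$. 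The bound~(\ref{eqPro42}) follows immediately from the $O\bigl((g(x_0)+\varepsilon)^{k+\ell}x_0^{-n}n^{-1/2}\bigr)$-type tail estimate in the technical lemma, taking $\overline q$ strictly between $g(x_0)=q$ and $1$.

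The main obstacle I anticipate is purely bookkeeping at the level of the singularity structure: one must confirm that combining the two summands of $C^{\bullet\bullet}(x,w,t)$ genuinely produces a function of the form covered by the technical lemma — in particular that no spurious higher-order poles in $(1-y(x)w)$ appear, that the $X$ in the denominator occurs only to a power the lemma can handle, and above all that the numerator $G$ does not vanish at the critical point (so that the leading asymptotic term is the expected one and not a lower-order correction). A secondary subtlety, just as in Proposition~\ref{Pro3}, is that $C'(x)$ is itself only square-root-analytic at $\rho$, so every occurrence of $C'(x)$ and of $A(xC'(x))$ must be expanded in $X=\sqrt{1-x/\rho}$ before the lemma applies; one also uses that $\rho C'(\rho)$ lies strictly inside the disc of convergence of $A$, so that $A(xC'(x))$ and hence $y(x)$ inherit a clean local representation of the form~(\ref{eqyx}). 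None of this is conceptually hard, but it is where the care is needed; the actual extraction of asymptotics is then a mechanical application of the Appendix~B lemma.
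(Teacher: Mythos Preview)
Your approach is essentially the paper's: verify that $C^{\bullet\bullet}(x,w,t)$ has the shape required by Lemma~\ref{Le6} and then read off (\ref{eqPro41}) and (\ref{eqPro42}). There is, however, a concrete mis-identification in your singularity bookkeeping that you should fix before the argument goes through cleanly.

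You write that the ``same block'' term $B^{\bullet\bullet}(xC'(x),w,t)\,C^\bullet(x,w)\,C^\bullet(x,t)$ inherits a factor $1/X$ from $B^{\bullet\bullet}$ via the $1-x(4A+3)$ denominator. This is not correct: after the substitution $x\mapsto xC'(x)$ we are evaluating $B^{\bullet\bullet}$ at $v_0=\rho C'(\rho)$, which lies strictly below the radius of convergence $3-2\sqrt 2$ of $B^{\bullet\bullet}$ in its first variable. Hence $1-xC'(x)(4A(xC'(x))+3)$ is analytic and non-zero at $x=\rho$, and $B^{\bullet\bullet}(xC'(x),w,t)$ contributes no $1/X$. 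The paper states this explicitly. The whole $1/X$ in the representation required by Lemma~\ref{Le6} comes from the ``different blocks'' term: differentiating the square-root function $xC'(x)$ (and hence $a(x)$, $b(x)$) produces functions of the form $(\overline g(x)-\overline h(x)X)/X$, so each factor $\partial_x C^\bullet$ carries a $1/X$, and after dividing by $(xC'(x))'$ (which also behaves like $1/X$) the net power is exactly $X^{-1}$. The same-block term is then absorbed into the analytic numerator $G(x,X,w,t)$ with an extra factor of $X$, so it is subdominant and does not affect the non-vanishing of $G$ at the critical point.

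Once this is corrected, your verification that $C^{\bullet\bullet}$ fits Lemma~\ref{Le6} is the paper's argument, and (\ref{eqPro41})--(\ref{eqPro42}) follow. Your remark about the ``automatic'' factorisation $\overline d_{k,\ell}=\overline d_k\overline d_\ell$ is also in the paper, but note that for the \emph{connected} case the relevant observation is the universal one in the Remark following Theorem~\ref{ThCOP} (the asymptotic leading part of $C^{\bullet\bullet}$ is precisely the different-blocks term $T$, and $T$ factors as $H_z(xC'(x),w)H_z(xC'(x),t)\cdot x(xC'(x))'$), not the $2$-connected computation after Theorem~\ref{Th2COP}.
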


\begin{lemma}\label{Le6}
Let $f(x,w,t) = \sum_{n,k,\ell} f_{n,k,\ell} x^n w^k t^\ell$ be a
triple generating function of non-negative numbers $f_{n,k,\ell}$,
and suppose that $f(x,w,t)$ can be represented as
\begin{equation}\label{eqLe61}
f(x,w,t) = \frac{G(x,X,w,t)}{X }
\frac{\exp\left(\frac{H(x,X,w)}{1-y(x)w}+ \frac{H(x,X,t)}{1-y(x)t}\right)}
{(1-y(x)w)^2(1-y(x)t)^2},
\end{equation}
where $X = \sqrt{1-x/x_0}$, the functions $G(x,v,w,t)$ and
$H(x,v,w)$ are non-zero and analytic at $(x,v,w,t) =
(x_0,0,1/g(x_0),1/g(x_0))$, and $y(x)$ is a power series of the
square-root type as in (\ref{eqyx}).

Then we have uniformly for $k,\ell \le C \log n$ (with an arbitrary constant $C > 0$)
\begin{align*}
f_{n,k,\ell} &=  \frac{G(x_0,0,1/g(x_0),1/g(x_0)}
{4 \pi^{3/2} H(x_0,0,1/g(x_0))^{3/2}} e^{H(x_0,0,1/g(x_0)) - \frac 2{g(x_0)}H_w(x_0,0,1/g(x_0))}\\
&\times g(x_0)^{k+\ell} x_0^{-n} (k\ell)^{\frac 14} e^{2\sqrt{H(x_0,0,1/g(x_0)} (\sqrt k + \sqrt \ell)} n^{-\frac 12} \left( 1 + O\left( \frac 1{\sqrt k} + \frac 1{\sqrt \ell} \right) \right).
\end{align*}
Moreover, for every $\varepsilon > 0$ we have uniformly for all $n,k\ge 0$
\[
f_{n,k,\ell} = O\left( (g(x_0)+\varepsilon)^{k+\ell}  x_0^{-n} n^{-\frac 12} \right).
\]

If $g(x_0)<1$, then $f_n = \sum_{k,\ell} f_{n,k,\ell}$ is given
asymptotically  by
\[
f_n =  \frac{G(x_0,0,1,1)\exp\left( \frac{2 H(x_0,0,1)}{1-g(x_0)} \right)}
{\sqrt{\pi} (1-g(x_0))^4}\,
 x_0^{-n}  n^{-\frac 12} \left( 1 + O\left( \frac 1n \right)\right),
\]
and for every pair $(k,\ell)$ the limit
\[
d_{k,\ell} = \lim_{n\to\infty} \frac{f_{n,k,\ell}}{f_n} \]
exists.
\end{lemma}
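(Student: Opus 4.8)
The plan is to prove Lemma~\ref{Le6} by a multivariate Cauchy coefficient extraction, following the same strategy used for Lemma~\ref{Le4} but now accounting for the essential singularity coming from the two exponential factors, exactly as in the treatment of Lemma~\ref{Le5}. Write
\[
f_{n,k,\ell} = \frac 1{(2\pi i)^3} \oint\oint\oint
\frac{G(x,X,w,t)}{X\,(1-y(x)w)^2(1-y(x)t)^2}
\exp\!\left(\frac{H(x,X,w)}{1-y(x)w}+\frac{H(x,X,t)}{1-y(x)t}\right)
\frac{dx\,dw\,dt}{x^{n+1}w^{k+1}t^{\ell+1}}.
\]
Since the factorization in \eqref{eqLe61} separates the $w$-dependence and the $t$-dependence except through the common factor $1/X$ and through $X$ inside $G$ and $H$, the idea is to extract the $w$- and $t$-coefficients first, for fixed $x$ near $x_0$, and only afterwards perform the $x$-integration along a Hankel contour. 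First I would fix $x$ and analyze the inner double integral
\[
[w^k t^\ell]\ \frac{G(x,X,w,t)}{(1-y(x)w)^2(1-y(x)t)^2}\,
\exp\!\left(\frac{H(x,X,w)}{1-y(x)w}\right)\exp\!\left(\frac{H(x,X,t)}{1-y(x)t}\right).
\]
For $k,\ell$ large this is governed by the poles at $w=1/y(x)$ and $t=1/y(x)$ combined with the essential singularities there; the extraction of $[w^k]$ of a function of the shape $R(w)\exp(H/(1-y w))$ near $w=1/y$ is precisely the saddle-point computation carried out for Lemma~\ref{Le5}, via Hayman's method (\cite{Hayman}), and yields a factor of the form (constant)$\cdot y^{-k} k^{1/4} e^{2\sqrt{H k}}$ up to lower-order corrections, with the constant depending analytically on $x$ through $y(x)$, $H(x,X,1/g(x))$ and its derivatives. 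Applying this in $w$ and in $t$ independently produces a product $y(x)^{-k-\ell}(k\ell)^{1/4} e^{2\sqrt{H}(\sqrt k+\sqrt\ell)}$ times an analytic prefactor $\Phi(x,X)$ that absorbs $G$, the exponential of the regular parts $\tfrac12 H-\tfrac1{g}H_w$ evaluated appropriately, and all the combinatorial constants; crucially the exponent $2\sqrt{H}(\sqrt k+\sqrt\ell)$ and the base $y(x)^{-k-\ell}$ are the only places where the large parameters $k,\ell$ enter in a way that will interact with the $x$-integration.

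Next I would substitute this inner asymptotic back and treat the remaining single integral in $x$ around the square-root singularity $x_0$. The factor $1/X$ together with the prefactor $\Phi$ gives, by singularity analysis / transfer theorems (\cite{FO}), the $x_0^{-n} n^{-1/2}$ behaviour; the extra $k,\ell$-dependence sits in $y(x)^{-k-\ell}$ but since $k,\ell=O(\log n)$ and $y(x)=g(x)-h(x)X$ with $g(x_0)<1$, this only shifts the effective singularity location by $O((k+\ell)/n)=o(1)$ and can be handled by the same uniform Hankel-contour estimates as in Lemmas~\ref{Le3}--\ref{Le5}: one replaces $y(x)^{-k-\ell}$ by its expansion $g(x_0)^{-k-\ell}(1 - (k+\ell)(g(x_0)-g(x))/g(x_0)+\cdots)$ and checks that the correction terms contribute only $O(1/n)$ relative error, while $H(x,X,1/g(x))$ is evaluated at $x_0$ with an error $O(X)=O(1/\sqrt n)$ which is dominated by the $O(1/\sqrt k+1/\sqrt\ell)$ error already present from the Hayman step. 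Collecting the constants from the double saddle-point and the single transfer step gives exactly the stated formula with the $4\pi^{3/2}H^{3/2}$ in the denominator (the $\pi^{3/2}$ being $\pi\cdot\pi^{1/2}$: one $\pi^{1/2}$ from each Hayman extraction is partly cancelled, and one $\pi^{1/2}$ from the $x$-transfer of $1/X$). The uniform bound for all $n,k,\ell$ follows from the same argument by shifting the $w$- and $t$-contours to radius $1/(g(x_0)+\varepsilon)$ and the $x$-contour to radius $x_0$, trivially bounding the integrand; and the $f_n$ asymptotics and existence of $d_{k,\ell}$ follow by setting $w=t=1$ (so the poles become $1-g(x_0)$ and no saddle point is needed) and dividing, exactly as in the earlier lemmas.

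The main obstacle I expect is making the double saddle-point analysis uniform in both $k$ and $\ell$ simultaneously and showing that the two Hayman-type extractions genuinely decouple up to the claimed error. Because $G$ and $H$ depend on $X=\sqrt{1-x/x_0}$, and $X$ is itself the integration variable in the outer step, one must verify that the inner asymptotic expansion is valid uniformly as $x$ ranges over the Hankel contour (where $X$ can be small but also moderately large on the outer arcs), and that the error terms $O(1/\sqrt k)$, $O(1/\sqrt\ell)$ from the two extractions do not degrade when multiplied together or when integrated against the $x^{-n-1}$ kernel. This is the technical heart of the proof and is precisely why the lemma is stated separately and proved in Appendix~B; I would handle it by splitting the $x$-contour into the usual near-$x_0$ piece (where the inner expansion is uniform and the corrections are controlled termwise) and the far piece (where $|y(x)|<g(x_0)$ is bounded away, so $|y(x)|^{-k-\ell}$ is exponentially smaller and the contribution is negligible even after the $(k\ell)^{1/4}e^{2\sqrt H(\sqrt k+\sqrt\ell)}$ factors, since $k,\ell=O(\log n)$).
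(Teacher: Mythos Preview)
Your proposal is correct and follows essentially the same strategy as the paper: the paper's proof of Lemma~\ref{Le6} is a two-sentence sketch stating that it is a direct combination of the methods of Lemmas~\ref{Le3} and~\ref{Le5}, with the asymptotic leading term of $f(x,w,t)$ being
\[
\frac{G(x_0,0,w_0,w_0)}{X}\,
\frac{\exp\!\left( \dfrac{H(x_0,0,w)}{1- w/w_0} + \dfrac{H(x_0,0,t)}{1- t/w_0} \right)}
{\left( 1- w/w_0\right)^2\left( 1- t/w_0\right)^2},
\]
and this is precisely what your argument isolates.

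The only organisational difference is that you propose to perform the $w$- and $t$-extractions first (for general $x$ on the Hankel contour) and then the $x$-extraction, whereas the paper's pattern in Lemmas~\ref{Le3}--\ref{Le5} is rather to expand the integrand locally around $(x_0,0,w_0,w_0)$, peel off the leading term displayed above, and then integrate all three variables against their respective contours (Hankel for~$x$, Hayman saddle-point circles $|w|=w_0(1-\eta_k)$, $|t|=w_0(1-\eta_\ell)$ with $\eta_k\sim c k^{-1/2}$). Your ordering is perfectly valid and amounts to the same computation; the paper's ordering has the slight advantage that the leading term already factors as a product, so the three one-dimensional integrals are literally independent and no uniformity of the inner expansion in $x$ needs to be tracked explicitly---the decoupling concern you flag as ``the main obstacle'' becomes a non-issue once one works with the leading term directly and bounds the remainder globally on the product contour.
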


\begin{proof}[Proof of Proposition~\ref{Pro4}]
As usual, we check that $C^{\bullet\bullet}(x,w,t)$ has the form of
the generating function $f(x,w,t)$ in Lemma~\ref{Le6}. Then
(\ref{eqPro41}) and (\ref{eqPro42}) will follow automatically.

From (\ref{eqCxw}) it follows that
\[
\frac{\partial}{\partial x}C^\bullet(x,w) = e^{xC'(x)w +
a(x)w^2/(1-wb(x))} \left(  (xC'(x))' w + \frac{a'(x)w^2}{1-wb(x)} +
\frac{a(x)b'(x)w^3}{(1-wb(x))^2} \right).
\]
By using the local expansion of $C'(x)$ it follows that
$(xC'(x))'$, $a'(x)$, and $b'(x)$ can be represented as
\[
\frac{\overline g(x) - \overline h(x) \sqrt{1 -x/\rho} } {\sqrt{1
-x/\rho}},
\]
with functions $\overline g(x), \overline h(x)$ that are analytic
and non-zero for $x = \rho$. Furthermore, observe that
$B^{\bullet\bullet}(x,w,t)$ is analytic for $x = v_0 = \rho
C'(\rho)$. Hence it  follows easily that $C^{\bullet\bullet}(x,w,t)$
satisfies the assumptions of Lemma~\ref{Le6}, as claimed.
\end{proof}

\begin{theo}\label{ThCOP}
Let $\Delta_n$ denote the maximum degree of a random
connected outerplanar vertex labelled graph with $n$ vertices.
Then
\[
\frac{\Delta_n}{\log n} \to  c \qquad\mbox{in probability}
\]
and
\[
\mathbb{E}\, \Delta_n \sim c\,\log n
\qquad (n\to\infty),
\]
where $c = \displaystyle\frac1{\log(1/q)}$, and $q$ is given by
\[
q = v_0\left( 1 + 2 A(v_0)\right) \approx 0.380813,
\]
and  $v_0 \approx 0.1707649$ satisfies the equation $1 = v_0
B''(v_0)$. %~\ref{LeC}.
\end{theo}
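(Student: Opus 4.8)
The plan is to apply the Master Theorem, Theorem~\ref{Th1}, to the class of connected outerplanar graphs, just as Theorem~\ref{Th2COP} was obtained in the $2$-connected case. All the analytic work has in fact already been carried out in Propositions~\ref{Pro3} and~\ref{Pro4} (built on the technical Lemmas~\ref{Le5} and~\ref{Le6}); what remains is to match those statements against the three hypotheses of Theorem~\ref{Th1} and to pin down the exponential base $q$.

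First I would identify $q$. By definition it is the reciprocal of the radius of convergence, in the variable $w$, of the probability generating function $\overline p(w) = \sum_{k\ge 1}\overline d_k w^k$ of Proposition~\ref{Pro3}. From the explicit expression for $\overline p(w)$, the only singularity in $w$ is the zero of $1 - v_0(2A(v_0)+1)w$, namely $w = 1/\bigl(v_0(2A(v_0)+1)\bigr)$ (it is an essential singularity of type $e^{\infty}$, since the numerator $\tfrac12 v_0^2(2A(v_0)+1)w^2$ appearing in the exponent is positive there, but only its \emph{location} matters for the growth rate). Hence the exponential growth rate of $\overline d_k$ is
\[
q = v_0\bigl(1 + 2A(v_0)\bigr) \approx 0.380813,
\]
so that $0 < q < 1$; here $v_0 \approx 0.1707649$ is the root of $1 = v_0 B''(v_0)$ quoted before Proposition~\ref{Pro3}. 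This value of $q$ coincides with $b(\rho) = v_0(1+2A(v_0))$, which plays the role of $g(x_0)$ in Lemma~\ref{Le5}; the hypothesis $g(x_0) < 1$ of that lemma is thus satisfied, consistently with the existence of the limits $\overline d_k$.

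It remains to verify the three conditions of Theorem~\ref{Th1}. Condition~1 follows from the asymptotics $\overline d_k \sim c_1 k^{1/4} e^{c_2\sqrt k} q^k$ of Proposition~\ref{Pro3}: on taking logarithms, the terms $c_2\sqrt k$ and $\tfrac14\log k$ are of lower order than $k\log q$, so $\log\overline d_k \sim k\log q$. Condition~2 — the relations $d_{n,k}\sim\overline d_k$ and $d_{n,k,\ell}\sim\overline d_k\overline d_\ell$, uniformly for $k,\ell\le C\log n$ — is exactly what equations~(\ref{eqPro31}) and~(\ref{eqPro41}) of Propositions~\ref{Pro3} and~\ref{Pro4} assert. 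Condition~3 — the uniform bounds $d_{n,k} = O(\overline q^{\,k})$ and $d_{n,k,\ell} = O(\overline q^{\,k+\ell})$ for some $\overline q<1$ — is given by equations~(\ref{eqPro32}) and~(\ref{eqPro42}). Theorem~\ref{Th1} then yields $\Delta_n/\log n \to 1/\log(1/q)$ in probability and $\mathbb{E}\,\Delta_n \sim (\log n)/\log(1/q)$, which is the assertion with $c = 1/\log(1/q)$. The only genuine obstacle here is the bookkeeping of the previous paragraph, namely confirming that the base of the exponential decay of the limiting degree distribution is indeed $q = v_0(1+2A(v_0))$ with $v_0$ characterized by $1 = v_0 B''(v_0)$; everything else is a direct citation of the two propositions.
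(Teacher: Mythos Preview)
Your proposal is correct and follows exactly the route the paper (implicitly) takes: the paper does not spell out a separate proof for Theorem~\ref{ThCOP}, but the intended argument is the direct application of Theorem~\ref{Th1} via Propositions~\ref{Pro3} and~\ref{Pro4}, in complete analogy with the proofs of Theorems~\ref{Th2COP}, \ref{Th2csp} and~\ref{Thcsp}. Your additional identification of $q$ as $b(\rho)=v_0(1+2A(v_0))=g(x_0)$ in the notation of Lemma~\ref{Le5} is accurate and makes the bookkeeping explicit.
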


\begin{rem}
Similarly as in the $2$-connected case, it is possible to check the
relation $\overline d_{k,\ell} = \overline d_k\overline d_\ell$, or
equivalently $\overline p(w,t) = \overline p(w)\overline p(t)$.
However, in the connected case we can prove a more universal
property.  Suppose that we have a class of vertex labelled graphs
whose block decomposition translates into the equation $C'(x) =
e^{B'(xC'(x))}$, and consequently into $C^\bullet(x,w) = e^{
  B^\bullet(x C'(x),w)}$. Furthermore, assume that the radius of
convergence of $B'(x)$ is strictly larger than the evaluation of
$xC'(x)$ at its radius of convergence. Then we automatically have
the property that $C'(x)$ has a square-root singularity at its
dominant singularty $\rho$ (which is also the radius of
convergence).  Setting $H(z,w) = e^{B^\bullet(z,w)}$, we have that
(again by \cite[Lemma
  3.1]{DGN2}) \[ \overline p(w) = \sum_{k\ge 1} \overline d_k w^k = H_z(\rho C'(\rho),w) =
e^{ B^\bullet(\rho C'(\rho),w)} \frac{\partial B^\bullet}{\partial
  x}(\rho C'(\rho),w).
\]
Next observe that the asymptotic leading part of $C^{\bullet\bullet}(x,w,t)$ comes from the term
\[
T:= \frac{x}{(xC'(x))'} \frac{\partial}{\partial x}C^\bullet(x,w) \frac{\partial}{\partial x} C^\bullet(x,t).
\]
Since
\[
\frac{\partial}{\partial x}C^\bullet(x,w) = H_z (xC'(x),w)
(xC'(x))',
\]
we also have
\[
T = H_z(xC'(x),w)H_z(xC'(x),t) x(xC'(x))',
\]
and consequently
\[
\overline p(w,t) = \lim_{n\to\infty} \frac{[x^n]\, T}{[x^n] x(xC'(x))' } = H_z(\rho C'(\rho),w)H_z(\rho C'(\rho),t) = \overline p(w) \overline p(t).
\]
In particular we obtain the relation $\overline d_{k,l}=\overline
d_k \overline d_\ell$ for connected outerplanar graphs, as well as
for connected series-parallel graphs.
\end{rem}

%=============================================
%=============================================
\section{Series-parallel graphs: combinatorics}\label{secSPcomb}
%=============================================
%=============================================

We now turn our attention to the combinatorics of rooted and double
rooted 2-connected and connected series-parallel graphs with respect
to the degree of the roots. In this section we derive equations for
the generating functions of these families of graphs, while their
singularity analysis is postponed to Section~\ref{secSPAsymp}.

%=============================================
\subsection{SP networks}
%=============================================

Recall that a connected series-parallel graph can be seen as the
result of repeated  series-parallel edge extensions applied to a
tree. Thus, the basic element of a series-parallel graph is the
result of series-parallel edge extensions of a single edge. Such
graphs are also called series-parallel
networks\index{series-parallel network}. They have two distinguished
vertices (or roots) that are called {\it poles}\index{poles of
series-parallel networks}. Series-parallel extensions induces a
recursive description of SP networks: they are either a parallel
composition of SP networks, a series composition of SP networks, or
just the smallest network consisting of the two poles and an edge
joining them.

Let $E(x)$ and $S(x)$ be the generating functions for labelled SP
networks and series SP networks, where the exponent of $x$ counts the
number of vertices other than the two poles. They satisfy the
relations
\begin{align*}
E(x) &= 2e^{S(x)} - 1,\\
S(x) &= x(E(x)-S(x))E(x).
\end{align*}
The first equation follows from the fact that a series SP network is a
non-empty set of series SP networks (it is a parallel SP network if
the set contains more than one network, and a series SP network
otherwise), where the factor $2$ stands for choosing whether we have
an edge joining both poles or not (see
Figure~\ref{fig:SP-network}). The second equation stablishes that a
series SP network is always the series composition of two SP networks,
where the first one is taken to be non-series to avoid multiple
counting.

\begin{figure}[htp] \centering
\includegraphics[width=0.8\hsize]{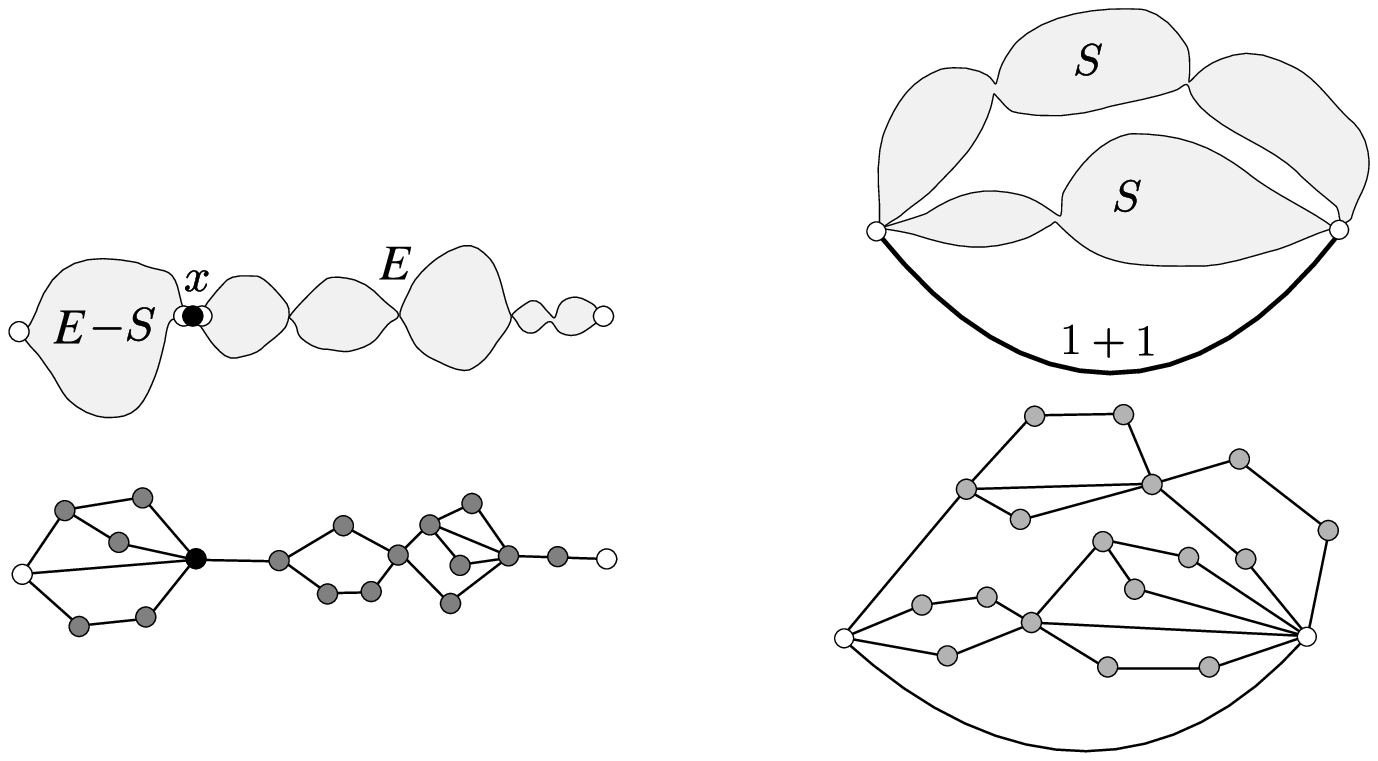}
\caption{A parallel composition (left) and a series composition (right) of SP networks.} \label{fig:SP-network}
\end{figure}

Next let $D(x,w)$ and $S(x,w)$ be the generating functions for SP
and series SP networks, where the exponent of $w$ counts the degree
of the first pole. Note that $E(x)=D(x,1)$ and $S(x)=S(x,1)$. (To be
consistent, we should have chosen $D(x)$ instead of $E(x)$, but we
have opted for $E(x)$ to avoid confusion in future formulas). Here
we have
\begin{align*}
D(x,w) &= (1+w)e^{S(x,w)} - 1,\\
S(x,w) &= x(D(x,w)-S(x,w))E(x).
\end{align*}

We now consider double rooted SP networks: the first root is taken
always as the first pole, while the second root is a vertex other
than the first pole. Let $D_1(x,w,t)$ and $S_1(x,w,t)$ denote the
generating functions for SP and series SP networks where the second
root is the second pole, and $D_2(x,w,t)$ and $S_2(x,w,t)$ denote
the generating functions for SP and series SP networks where the
second root is not the second pole. Here, the exponents of $w$ and
$t$ count the degree of the first and second root, respectively.
%\footnote{I interchanged the index notation for $D_1$ and $D_2$
%and $S_1$ and $S_2$ (from your original manuscript)
%to be consistent with the outerplanar part. But this can be changed, of course.}
The corresponding relations are
\begin{align*}
D_1(x,w,t) &= (1+wt)e^{S_1(x,w,t)} -1,\\
S_1(x,w,t) &= x(D(x,w)-S(x,w))D(x,t),\\
D_2(x,w,t) &= (1+w)e^{S(x,w)}S_2(x,w,t), \\
S_2(x,w,t) &= x(D_2(x,w,t)-S_2(x,w,t))E(x)\\
&+ x(D_1(x,w,t)-S_1(x,w,t))D(x,t)\\
&+ x(D(x,w)-S(x,w))D_2(x,1,t).
\end{align*}

Note that $D(x,1) = D_1(x,1,1) = E(x)$, $S(x,1) = S_1(x,1,1) = S(x)$ and that
$D_2(x,1,1) = xE'(x)$ and $S_2(x,1,1) = xS'(x)$. Also, observe that, by symmetry, $D_1(x,1,t)=D(x,t)$.

These equations can be easily solved. First one uses the implicit equation
\begin{equation}\label{eqEx}
E(x) = 2 \exp\left(\frac{xE(x)^2}{1+xE(x)} \right) - 1
\end{equation}
for $E(x)$ to express
\[
S(x) = \frac {xE(x)^2}{1+xE(x)}.
\]
Secondly, the implicit equation
\begin{equation}\label{eqDxw-imp}
D(x,w) = (1+w) \exp\left(\frac{xD(x,w)E(x)}{1+xE(x)} \right) - 1
\end{equation}
determines $D(x,w)$, and it can be used to obtain
\[
S(x,w) = \frac{xE(x)}{1+xE(x)} D(x,w).
\]
With the help of these representations we get directly
\begin{align*}
D_1(x,w,t) &= (1+wt)\exp\left( \frac{x}{1+xE(x)} D(x,w)D(x,t) \right) -1,\\
S_1(x,w,t) &=  \frac{x}{1+xE(x)} D(x,w)D(x,t).
\end{align*}
Next we set $w=1$ and obtain from the two equations for
$D_2$ and $S_2$ the representations
\begin{align*}
D_2(x,1,t) &= \frac{x(1+E(x))}{1-2xE(x)^2-x^2E(x)^3} D(x,t)^2,\\
S_2(x,1,t) &= \frac{x}{1-2xE(x)^2-x^2E(x)^3} D(x,t)^2.
\end{align*}
Finally this gives
\begin{align*}
D_2(x,w,t) &= \frac{x(1+D(x,w))D(x,t)}{1-xE(x)D(x,w)}
\left( (1+wt)\exp\left( \frac{x}{1+xE(x)} D(x,w)D(x,t) \right) -1 \right)\\
&+ \frac{x^2E(x)(1+xE(x))}{1-2xE(x)^2-x^2E(x)^3}
\frac{(1+D(x,w))D(x,w)D(x,t)^2}{1-xE(x)D(x,w)},\\
S_2(x,w,t) &= \frac{xD(x,t)}{1-xE(x)D(x,w)}
\left( (1+wt)\exp\left( \frac{x}{1+xE(x)} D(x,w)D(x,t) \right) -1 \right)\\
&+ \frac{x^2E(x)(1+xE(x))}{1-2xE(x)^2-x^2E(x)^3}
\frac{D(x,w)D(x,t)^2}{1-xE(x)D(x,w)}.
\end{align*}

%=============================================
\subsection{$2$-connected SP graphs}
%=============================================

It has been shown (see \cite{DGN2}) that the generating function
$B(x)$ of $2$-connected SP graphs can be expressed in terms of
$E(x)$ as
\begin{equation}\label{eqBxEx}
B(x) = \frac12 \log(1 + xE(x)) - \frac{xE(x)(x^2E(x)^2 + xE(x) + 2 -
2x)}{4(1+xE(x))}.
\end{equation}
Next we recall that the generating function $B^\bullet(x,w)$ of
rooted $2$-connected SP graphs, where the exponent of $x$ counts the
number of non-root vertices and the exponent of $w$ the degree of
the root, satisfies
\begin{equation}\label{eqBxw0}
w\frac{\partial}{\partial w} B^\bullet (x,w) =
\sum_{k\ge 1} k B_k(x) w^k = xwe^{S(x,w)}.
\end{equation}
From this relation it is possible to obtain
an explicit representation for $B^\bullet(x,w)$.

\begin{lemma}[From {\cite[Lemma 4.2]{DGN2}}]\label{Lesp21}
Let $B^\bullet(x,w)$ be the generating function of
vertex rooted $2$-connected SP graphs, where the exponent of
$x$ counts the number of vertices and the exponent of $w$ the
degree of the root vertex. Then we have
\[
B^\bullet(x,w) = x\left(D(x,w) - {xE(x) \over 1+xE(x)} D(x,w)
\left(1 + {D(x,w) \over 2} \right)     \right).
\]
\end{lemma}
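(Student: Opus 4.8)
The plan is to derive the explicit formula for $B^\bullet(x,w)$ starting from relation~(\ref{eqBxw0}), which gives $w\frac{\partial}{\partial w}B^\bullet(x,w) = xw\,e^{S(x,w)}$. The strategy is to integrate this identity with respect to $w$, using the explicit representation of $S(x,w)$ in terms of $D(x,w)$ and $E(x)$ derived in the preceding subsection, namely $S(x,w) = \frac{xE(x)}{1+xE(x)}D(x,w)$, together with the implicit equation~(\ref{eqDxw-imp}) for $D(x,w)$.

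First I would rewrite the right-hand side. From $w\frac{\partial}{\partial w}B^\bullet = xw\,e^{S(x,w)}$ we get $\frac{\partial}{\partial w}B^\bullet(x,w) = x\,e^{S(x,w)}$. Now the key observation is that~(\ref{eqDxw-imp}) says $D(x,w)+1 = (1+w)e^{S(x,w)}$, so $e^{S(x,w)} = \frac{1+D(x,w)}{1+w}$, and hence
\[
\frac{\partial}{\partial w}B^\bullet(x,w) = x\,\frac{1+D(x,w)}{1+w}.
\]
So it remains to express the right-hand side as an explicit $w$-derivative. The plan here is to differentiate the implicit equation~(\ref{eqDxw-imp}) with respect to $w$: writing $\beta(x) = \frac{xE(x)}{1+xE(x)}$ so that $S(x,w) = \beta(x)D(x,w)$ and $D = (1+w)e^{\beta D} - 1$, differentiating gives $D_w = e^{\beta D} + (1+w)\beta D_w e^{\beta D} = \frac{1+D}{1+w} + \beta(1+D)D_w$, whence $D_w\bigl(1 - \beta(1+D)\bigr) = \frac{1+D}{1+w}$, i.e.
\[
\frac{1+D(x,w)}{1+w} = \bigl(1-\beta(x)(1+D(x,w))\bigr)\,\frac{\partial D}{\partial w}(x,w).
\]
Multiplying by $x$ and recognizing the right-hand side as $\frac{\partial}{\partial w}\bigl(x D - x\beta(D + \tfrac12 D^2)\bigr)$ — since $\frac{\partial}{\partial w}(D + \tfrac12 D^2) = (1+D)D_w$ — we integrate in $w$ from $0$; using $D(x,0)=0$ (a network with first pole of degree zero is empty... actually $D(x,0) = 0$ follows directly from $D(x,w)|_{w=0}$ in~(\ref{eqDxw-imp}) giving $D = e^{\beta D}-1$, whose only relevant solution with $D(x,0)$ a power series vanishing at $x=0$ is $D\equiv 0$) and $B^\bullet(x,0)=0$, this yields exactly
\[
B^\bullet(x,w) = x\left(D(x,w) - \beta(x) D(x,w)\Bigl(1 + \tfrac12 D(x,w)\Bigr)\right),
\]
which is the claimed formula since $\beta(x) = \frac{xE(x)}{1+xE(x)}$.

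The main obstacle I anticipate is purely bookkeeping: justifying the constant of integration, i.e.\ checking $D(x,0)=0$ and $B^\bullet(x,0)=0$ as formal power series so that the antiderivative is pinned down uniquely, and making sure the algebraic manipulation of the implicit equation is done consistently (it is easy to mishandle the factor $1-\beta(1+D)$). There is no analytic difficulty here — everything is at the level of formal power series identities — and indeed the excerpt attributes the result to~\cite[Lemma 4.2]{DGN2}, so the proof in the paper may simply cite that source rather than repeat the integration; if a proof is given, it will be this short integration argument.
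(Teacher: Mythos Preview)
Your proof is correct. The paper does not prove this lemma at all: it is stated with the attribution ``From \cite[Lemma~4.2]{DGN2}'' and no proof is given, exactly as you anticipated in your final paragraph. Your integration argument is the natural derivation and every step checks out, including the identification of $\tfrac{1+D}{1+w}$ with $(1-\beta(1+D))D_w$ via implicit differentiation of~(\ref{eqDxw-imp}) and the boundary conditions $B^\bullet(x,0)=0$, $D(x,0)=0$.
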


We also obtain an explicit expression for the generating function
of double rooted 2-connected graphs.

\begin{lemma}\label{Lesp22}
Let $B^{\bullet\bullet}(x,w,t)$ denote the generating function of
labelled double rooted $2$-connected SP graphs, where the exponent of
$x$ counts the number of non-root vertices, and the exponents of $w$
and $t$ count the degree of the two roots. Then we have
\begin{align}
& w\frac{\partial}{\partial w} B^{\bullet\bullet}(x,w,t)
= wt e^{S_1(x,w,t)} + we^{S(x,w)} S_2(x,w,t) \label{eqLesp22}\\
&= wt \exp\left(\frac{x}{1+xE(x)} D(x,w)D(x,t)\right) \nonumber \\
& +\frac{xwD(x,t)(D(x,w)+1)}{(1+w)(1-xE(x)D(x,w))} \left(
(1+wt)\exp\left(\frac{xD(x,w)D(x,t)}{1+xE(x)}\right)-1
\right. \nonumber \\
& \left. +\frac{xE(x)(1+xE(x))D(x,w)D(x,t)}{1-2xE(x)^2+x^2E(x)^3}\right) \nonumber
%&+ \frac{xwD(x,t)(1+D(x,w))}{(1+w)(1-xE(x)D(x,w))}
%\left( (1+wt)\exp\left( \frac{x}{1+xE(x)} D(x,w)D(x,t) \right) -1 \right)\nonumber\\
%&+ \frac{x^2E(x)(1+xE(x))}{1-2xE(x)^2-x^2E(x)^3}
%\frac{D(x,w)D(x,t)^2}{1-xE(x)D(x,w)}.\nonumber
\end{align}
\end{lemma}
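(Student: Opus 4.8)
The strategy is to mimic the derivation of $B^\bullet(x,w)$ in Lemma~\ref{Lesp21}, now tracking the second root. The starting point is the combinatorial identity
\[
w\frac{\partial}{\partial w} B^{\bullet\bullet}(x,w,t)
= wt\, e^{S_1(x,w,t)} + w\, e^{S(x,w)} S_2(x,w,t),
\]
which encodes the block decomposition of a double rooted $2$-connected SP graph ``from the point of view of the first root'': every edge incident to the first root is the top pole of an SP network, and exactly one of these networks is distinguished as the one containing the second root. If the second root is the second pole of that distinguished network we get the term $wt\,e^{S_1(x,w,t)}$; otherwise the second root sits strictly inside one of the series components, which yields $w\,e^{S(x,w)}S_2(x,w,t)$ (the factor $e^{S(x,w)}$ being the contribution of the remaining networks at the first pole, all of which are series since $S(x,w)$ enumerates series SP networks with the top pole degree tracked). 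This identity is the analogue of (\ref{eqBxw0}).

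Next I would substitute the explicit expressions for $S_1(x,w,t)$, $S(x,w)$ and $S_2(x,w,t)$ obtained earlier in the SP networks subsection. From $S_1(x,w,t) = \frac{x}{1+xE(x)}D(x,w)D(x,t)$ and $S(x,w) = \frac{xE(x)}{1+xE(x)}D(x,w)$ together with $D(x,w) = (1+w)e^{S(x,w)}-1$, so that $e^{S(x,w)} = (1+D(x,w))/(1+w)$, the term $w\,e^{S(x,w)}S_2(x,w,t)$ becomes
\[
\frac{w(1+D(x,w))}{1+w}\left(\frac{xD(x,t)}{1-xE(x)D(x,w)}\Bigl((1+wt)e^{\frac{xD(x,w)D(x,t)}{1+xE(x)}}-1\Bigr)
+\frac{x^2E(x)(1+xE(x))}{1-2xE(x)^2-x^2E(x)^3}\cdot\frac{D(x,w)D(x,t)^2}{1-xE(x)D(x,w)}\right),
\]
which is exactly the second summand claimed in (\ref{eqLesp22}). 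The first summand $wt\,e^{S_1(x,w,t)}$ is immediate. So the identity (\ref{eqLesp22}) is proved once these substitutions are carried out; this is routine bookkeeping, the only care needed being the sign in the denominator $1-2xE(x)^2+x^2E(x)^3$ versus $1-2xE(x)^2-x^2E(x)^3$, which should be checked against the formulas for $S_2(x,1,t)$.

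The main obstacle — and the reason the lemma is stated as an identity for $w\,\partial_w B^{\bullet\bullet}$ rather than for $B^{\bullet\bullet}$ itself — is that, unlike in the univariate case of Lemma~\ref{Lesp21}, the right-hand side here is \emph{not} obviously the $w$-logarithmic derivative of a manageable closed form. To recover $B^{\bullet\bullet}(x,w,t)$ one integrates $\int_0^w \frac{1}{u}\bigl(ut\,e^{S_1(x,u,t)}+u\,e^{S(x,u)}S_2(x,u,t)\bigr)\,du$; the presence of the factors $1-xE(x)D(x,u)$ in the denominator and the nested exponential make the antiderivative awkward, so in practice I would keep the statement in the differentiated form (\ref{eqLesp22}), which is all that is needed for the subsequent singularity analysis in Section~\ref{secSPAsymp} — there one extracts coefficients of $w^k t^\ell$ from $w\,\partial_w B^{\bullet\bullet}$ and divides by $k$, exactly as was done with (\ref{eqBxw0}). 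Thus the proof reduces to: (i) justify the combinatorial identity for $w\,\partial_w B^{\bullet\bullet}$ by the block decomposition described above, and (ii) mechanically substitute the previously derived network generating functions and simplify using $e^{S(x,w)}=(1+D(x,w))/(1+w)$.
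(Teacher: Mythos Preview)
Your approach is the paper's, and the algebraic substitutions in step (ii) are correct (your flag on the sign in $1-2xE(x)^2\pm x^2E(x)^3$ is well taken; the ``$+$'' in the displayed lemma is a typo for ``$-$'', consistent with the earlier formula for $S_2(x,1,t)$).

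The combinatorial description in step (i), however, is not quite right as written. It is not the case that ``every edge incident to the first root is the top pole of an SP network, and exactly one of these networks is distinguished as the one containing the second root''. The correct picture, exactly parallel to (\ref{eqBxw0}), is: the operator $w\,\partial_w$ distinguishes a \emph{single} edge incident to the first root; deleting that edge turns the whole $2$-connected graph into \emph{one} SP network with non-adjacent poles, the poles being the first root and the other endpoint $v$ of the deleted edge. The two summands of (\ref{eqLesp22}) then arise according to whether $v$ coincides with the second root (yielding $wt\,e^{S_1(x,w,t)}$, the factors $wt$ restoring the degree lost at each root by the deleted edge) or not (yielding $w\,e^{S(x,w)}S_2(x,w,t)$, where the parallel decomposition of the resulting network into series components singles out the one containing the second root as $S_2$ and collects the rest as $e^{S(x,w)}$). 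Once (i) is phrased this way, your proof is complete and coincides with the paper's.
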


\begin{proof}
Equation~(\ref{eqLesp22}) is the natural extension of
Equation~(\ref{eqBxw0}) to double rooted graphs. Both follow from
the fact that we can obtain a SP network with non-adjacent poles
(that is, an object enumerated by $e^{S(x)}$) by distinguishing,
orienting and then deleting any edge of an arbitrary 2-connected SP
graph. Here, we use the partial derivative $\partial/\partial w$ to
distinguish an edge incident to the first root. Finally, we note
that the two summands of Equation~\ref{eqLesp22} correspond,
respectively,  to the case where the second root is the other vertex
of the distinguished edge,  and to the case where it is not.
\end{proof}

%=============================================
\subsection{Connected SP graphs}
%=============================================

As before we denote the corresponding generating functions
for connected SP graphs by
$C'(x)$, $C^\bullet(x,w)$, and $C^{\bullet\bullet}(x,w,t)$.
The function $C'(x)$ satisfies the same equation as
in the outerplanar case,
\[
C'(x) = e^{B'(xC'(x))}.
\]
Indeed, the situation is completely analogous to that of rooted and
double rooted connected outerplanar graphs (compare with
Lemma~\ref{lem:CpandCpp-outerplanar-combinatorics}).
\begin{lemma}\label{lem:CpandCpp-SP-combinatorics}
Let $C^\bullet(x,w)$ and $C^{\bullet\bullet}(x,w,t)$ be the generating
functions of rooted and double rooted connected SP graphs, where the
exponents of $w$ and $t$ count the degrees of the first and second
roots. Then, we have
\[
C^\bullet(x,w) = e^{ B^\bullet(x C'(x),w)}
\]
and
\begin{align*}
C^{\bullet\bullet}(x,w,t) &=
 \frac{x}{(xC'(x))'} \frac{\partial}{\partial x}C^\bullet(x,w)
\frac{\partial}{\partial x} C^\bullet(x,t) \\
& +  B^{\bullet\bullet}(x C'(x),w,t)   C^\bullet(x,w)C^\bullet(x,t).
\end{align*}
\end{lemma}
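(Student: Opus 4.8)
\textbf{Proof proposal for Lemma~\ref{lem:CpandCpp-SP-combinatorics}.}
The plan is to mirror exactly the argument already given for the outerplanar case in Lemma~\ref{lem:CpandCpp-outerplanar-combinatorics}, since the block decomposition of a connected graph into 2-connected components is a purely combinatorial device that does not depend on the ambient class, as long as that class is closed under the relevant operation (substituting a connected member of the class at a vertex). Both outerplanar and series-parallel graphs have this closure property, and the relation $C'(x) = e^{B'(xC'(x))}$ already records the block decomposition in the unrooted setting. So the first step is to recall that, for a connected SP graph rooted at a vertex $v$, the blocks incident to $v$ form a set, each such block being a 2-connected SP graph rooted at $v$, and every non-root vertex of each block is in turn the root of a connected SP graph (counted by $xC'(x)$). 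Since the degree of the root of the whole graph is the sum of the degrees of the roots of these blocks, the set construction passes to the exponential, and we get $C^\bullet(x,w) = e^{B^\bullet(xC'(x),w)}$.

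For the double-rooted function I would split according to whether the two roots $v_1,v_2$ lie in a common block or not, exactly as in Figure~\ref{f:tworoots-connected}. If they share a block, that block is enumerated by $B^{\bullet\bullet}(x,w,t)$ with its non-root vertices blown up by $xC'(x)$ and its second root blown up by $C^\bullet(x,t)$ (this last substitution contributes to $\deg v_2$); the remaining blocks at $v_1$ contribute the factor $e^{B^\bullet(xC'(x),w)}$. This gives the term $e^{B^\bullet(xC'(x),w)}\,B^{\bullet\bullet}(xC'(x),w,t)\,C^\bullet(x,t)$. If $v_1$ and $v_2$ are in different blocks, there is a unique block at $v_1$ on the path to $v_2$ and a unique cut vertex of that block leading on; so we mark a block rooted at $v_1$ (a factor $x\frac{\partial}{\partial x}B^\bullet(x,w)$) together with a distinguished non-root vertex of it, which is replaced by $x\frac{\partial}{\partial x}C^\bullet(x,t)$, while all other non-root vertices are replaced by $xC'(x)$. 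This yields $e^{B^\bullet(xC'(x),w)}\,x\frac{\partial}{\partial x}B^\bullet(xC'(x),w)\,\frac{\partial}{\partial x}C^\bullet(x,t)$. Adding the two cases and using $\frac{\partial}{\partial x}C^\bullet(x,w) = e^{B^\bullet(xC'(x),w)}\,x\frac{\partial}{\partial x}B^\bullet(xC'(x),w)\cdot\frac{(xC'(x))'}{x}$ — wait, more precisely $\frac{\partial}{\partial x}C^\bullet(x,w) = e^{B^\bullet(xC'(x),w)}\,\big(\partial_x B^\bullet\big)(xC'(x),w)\,(xC'(x))'$ by the chain rule — one simplifies the first term to $\frac{x}{(xC'(x))'}\frac{\partial}{\partial x}C^\bullet(x,w)\frac{\partial}{\partial x}C^\bullet(x,t)$ and the second to $B^{\bullet\bullet}(xC'(x),w,t)\,C^\bullet(x,w)C^\bullet(x,t)$, which is the claimed formula.

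The only real point to check — and the main (mild) obstacle — is that the combinatorial substitution identities are literally the same symbolic manipulations as in the outerplanar proof: namely that $e^{B^\bullet(xC'(x),w)} = C^\bullet(x,w)$ (already established), and that the chain-rule rewriting of $\partial_x C^\bullet(x,w)$ is valid as an identity of formal power series. Since none of these steps used any property of outerplanar graphs beyond the block decomposition and the stability of the class under vertex substitution, the proof of Lemma~\ref{lem:CpandCpp-outerplanar-combinatorics} transfers verbatim. I would therefore simply write: ``The proof is identical to that of Lemma~\ref{lem:CpandCpp-outerplanar-combinatorics}, the block decomposition being insensitive to the distinction between outerplanar and series-parallel graphs,'' and, if desired, reproduce the three-line chain-rule computation to make the passage self-contained.
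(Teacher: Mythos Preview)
Your proposal is correct and matches the paper's own treatment exactly: the paper does not give a separate proof of Lemma~\ref{lem:CpandCpp-SP-combinatorics} but simply remarks that the situation is completely analogous to the outerplanar case and refers back to Lemma~\ref{lem:CpandCpp-outerplanar-combinatorics}. Your reproduction of that block-decomposition argument, together with the chain-rule simplification, is precisely what is intended.
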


%=============================================
%=============================================
\section{Series-parallel graphs: asymptotics}\label{secSPAsymp}
%=============================================
%=============================================

We now analyze the singularities of the generating functions derived
in the previous section. In contrast to the case of outerplanar
graphs, the singularities of 2-connected and connected SP graphs are
of the same type (square-roots and powers of square-roots), so that
a single pair of technical lemmas is sufficient to deal with both
cases. As before, we derive asymptotic estimates for the numbers
$d_{n,k}$ and $d_{n,k,\ell}$ as $n,k,\ell \to \infty$ in a suitable
range, and we obtain a precise estimate for the maximum degree of
2-connected and connected SP graphs by applying the Master Theorem
discussed in the Introduction.

%=============================================
\subsection{Series-Parallel Networks}
%=============================================

We first discuss the singular behaviour of the functions $E(x)$ and
$D(x,w)$ associated to SP networks.

\bl[From {\cite[Lemma~2.2]{BGKN-series-parallel}}] \label{Le8.2}
The function $E(x)$ admits a local expansion of the form (\ref{eqyx})
or, equivalently, of the form
\[
E(x) = E_0 + E_1 X + E_2 X^2 + E_3 X^3\cdots,
\]
where $X=\sqrt{1 - x/\rho_1}$ and $\rho_1 \approx 0.1280038$. More
precisely, we have that $E_0$, $\rho_1$ and $E_1$ satisfy the
equations
%$E_0\approx 1.867893331$, $\rho_1\approx 0.1280038$, and $E_1$ are determined by the equations
\begin{align}
E_0+1&= 2 \exp\left( \frac 1{ 1 + 1/{E_0} + \sqrt{1 + 1/{E_0}} } \right),  \label{eqLe8.2}\\
\rho_1  &= \frac{\sqrt{1-1/E_0 } -1 }{E_0 }, \notag \\
E_1  &=  -\sqrt{\frac{2E_0(1+E_0)}{4+3\rho_1 E_0} }, \notag
\end{align}
from where we obtain
$E_0\approx 1.867893$ %E_0 = 1.867893331
%\rho_1 = 0.1280038$,
and $E_1\approx-1.507045$. %E_1 = -1.507044705
\el

\begin{proof}
We start with the implicit equation (\ref{eqEx}).
By \cite[Theorem~2.19]{Drm-randomtrees} the generating function
$E(x)$  has a singular representation of the form
\begin{align*}
E(x) &= g(x) - h(x)\sqrt{1- \frac x{\rho_1}} \\
& = E_0 + E_1 X + E_2  X^2 +  E_3 X^3 + \cdots,
\end{align*}
where $E_0 = E(\rho_1)$ and $\rho_1$ are determined by the
system of equations
\begin{align*}
E_0 &= 2 \exp\left(\frac{\rho_1E_0^2}{1+\rho_1E_0} \right) - 1,\\
1 &= 2 \exp\left(\frac{\rho_1E_0^2}{1+\rho_1E_0} \right)
\frac{\rho_1 E_0( 2 + \rho_1 E_0)}{(1 + \rho_1 E_0)^2}.
\end{align*}
From here we obtain that
\begin{equation}\label{eqrhoE0}
\rho_1 E_0^2\left( 2 + \rho_1 E_0\right) = 1,
\end{equation}
and hence Equation~\ref{eqLe8.2} in  the statement, and the
approximations $\rho_1 \approx 0.128004$ and $E_0 \approx 1.867893$.
Furthermore,
\[
E_1 = -\sqrt{\frac{2E_0(1+E_0)}{4+3\rho_1 E_0} }
\approx -1.507045 \ne 0.
\]
\end{proof}

\bl \label{Le8.2bis}
The function $D(x,w)$ has a local expansion of the form
\begin{equation}\label{eqDrep}
D(x,w) = D_0(x)+ D_1(x) W + D_2(x) W^2 + \cdots,
\end{equation}
where $W=\sqrt{1 - w/w_0(x)}$ and
\begin{align*}
w_0(x) &= \left( 1 + \frac 1{xE(x)} \right)
\exp\left( - \frac 1{1+xE(x)} \right) -1,\\
D_0(x) & = \frac 1{xE(x)},\\
D_1(x) &= -\left( 1 + \frac 1{xE(x)} \right)
\sqrt{\frac{2 w_0(x) }{1+w_0(x)}}, \\
D_2(x) &= -\frac{2}{3}\left(\exp\left(\frac
1{1+xE(x)}\right)-1-\frac 1{xE(x)}\right).
\end{align*}
In particular, the functions $w_0(x)$, $D_0(x)$, $D_1(x)$ and
$D_2(x)$ have a singular expansion in $X$ analogous to $E(x)$. \el

\begin{proof}
Recall that the generating function $D(x,w)$ satisfies the
Equation~(\ref{eqDxw-imp}). We first consider $x$ as a (complex)
parameter and observe, by another application of
\cite[Theorem~2.19]{Drm-randomtrees}, that $D(x,w)$ has a
representation of the form
\begin{align}
D(x,w) &= g(x,w)- h(x,w) \sqrt{1- \frac w{w_0(x)}} \nonumber \\
&= D_0(x) + D_1(x)
W +
D_2(x)   W^2 +
D_3(x) W^2 + \cdots,   \label{eqDxw-exp}
\end{align}
where $W = \sqrt{ 1 - w/{w_0(x)}}$ and
where $w_0(x)$ and $D_0(x)$ are determined by the system
of equations
\begin{align*}
D_0(x) &= (1+w_0(x)) \exp\left(\frac{xD_0(x)E(x)}{1+xE(x)} \right) - 1,\\
1 &= (1+w_0(x)) \exp\left(\frac{xD_0(x)E(x)}{1+xE(x)} \right)
\frac {xE(x)}{1+xE(x)}.
\end{align*}
In particular we obtain the representations claimed for $w_0(x)$,
$D_0(x)$, $D_1(x)$ and $D_2(x)$.
\end{proof}

Note that representations similar to those of Lemma~\ref{Le8.2} and
\ref{Le8.2bis} hold for $S(x) = xE(x)^2/(1+xE(x))$ and $S(x,w) =
xE(x)D(x,w)/(1+xE(x))$, respectively. We also note for future use
that $w_0(x)$ of Lemma~\ref{Le8.2bis} satisfies $w_0(\rho_1) \approx
1.312267 > 1$.  %w_0(rho_1)=1.312266725

Finally, we remark that (\ref{eqDrep}) can be rewritten as
\begin{equation}\label{eqDrep2}
D(x,w) =  G(x,X,w) - H(x,X,w)\sqrt{1- y(x)w},
\end{equation}
where $X = \sqrt{1- x/\rho_1}$, $y(x) = 1/w_0(x)$, and $G(x,v,w)$
and $H(x,v,w)$ are analytic functions that are non-zero
for $(x,v,w) = (\rho_1,0,w_0(\rho_1))$.

%===============================================
\subsection{$2$-Connected Series-Parallel Graphs}
%=============================================
We first recall the asymptotic estimate for the number of
$2$-connected SP graphs. From \cite[Theorem
2.5]{BGKN-series-parallel}, the number of labelled $2$-connected SP
graphs is given asymptotically by
\[
b_n = b\cdot
 n^{-\frac 52} \rho_1^{-n} n! \left( 1+ O\left( \frac 1n\right)\right),
\]
where $\rho_1 \approx 0.1280038$ and $b \approx 0.0010131$.

%\bigskip
%We recall that the $B(x)$ is given by (\ref{eqBxEx}).
%Hence, by using the singular expansion of $E(x)$ we obtain directly a singular expansion for $B(x)$ of the form
%\[
%B(x) = B_0 + B_2 X^2 + B_3 X^3 + \cdots.
%\]
%It is remarkable that $B_1=0$, which means that
%the square root term $X$ cancels. This follows from
%from the following explicit representations of
%\begin{align*}
%B_0 &= \frac 12\log(1+\rho_1 E_0)-\frac{\rho_1 E_0 (\rho_1^2 E_0^2-2 \rho_1+\rho_1 E_0+2}{4(1+\rho_1 E_0)},\\
%B_1 &= -\frac 12 \frac{\rho_1^2 E_1 (2 \rho_1 E_0^2+\rho_1^2 E_0^3-1)}{(1+\rho_1 E_0)^2}.
%\end{align*}
%By (\ref{eqrhoE0}) we have $2 \rho_1 E_0^2+\rho_1^2 E_0^3-1 = 0$ and
%consequently $B_1 = 0$, too.

%From this expansion we obtain directly an asymptotic expansion for
%the number of $2$-connected SP graphs.
%\bigskip

%We remark that we also get an expansion for the generating function
%$B'(x)$ of vertex rooted $2$-connected
%SP graphs,
%\begin{align*}
%B'(x) &= x\left( E(x) - \frac {xE(x)^2}{1+xE(x)}
%\left( 1 + \frac{E(x)}2 \right)\right)\\
%&= B_0' + B_1' X + \cdots
%\end{align*}
%with $B_0' = -B_2/\rho_1$ and $B_1' = - 3B_3/(2\rho_1)$.

Next we derive the asymptotic estimates for $d_{n,k}$ and
$d_{n,k,\ell}$ that we need in order to apply Theorem~\ref{Th1}.

\begin{prop}\label{Pro5}
Let $d_{n,k}$ denote the probability that a randomly selected
vertex in a $2$-connected SP graph with $n$ vertices
has degree $k$. Then we have uniformly for $k\le C \log n$
\begin{equation}%\label{eqPro31}
d_{n,k} \sim \overline d_k,
\end{equation}
where $\overline d_k$ denotes the asymptotic degree distribution of
$2$-connected SP graphs encoded by the generating function
\[
\overline p(w) =  \sum_{k\ge 2} \overline d_k w^k  =
\frac{B_1(w)}{B_1(1)},
\]
with
\begin{align*}
D_0( w) &= (1+w) \exp\left( {\frac {\rho_1 E_0 }{1+\rho_1 E_0 }D_0( w)}\right) - 1,\\
B_1( w) &= \frac{E_1 \rho_1 ^2D_0( w)^2}{2(1+\rho_1 E_0 )^2},
\end{align*}
and $\rho_1$, $E_0$, and $E_1$ are as in Lemma~\ref{Le8.2}. The
$\overline d_k$ are given asymptotically  by
\begin{equation}\label{eqdkSP2c}
\overline d_k = c\, w_0(\rho_1)^{-k} k^{-\frac 32}
\left(1 +  O\left( \frac 1k \right) \right),
\end{equation}
where $c>0$ is some constant.

Furthermore, we have uniformly for all $n,k\ge 1$
\begin{equation}%\label{eqPro12}
d_{n,k}= O(\overline q^ k)
\end{equation}
for some real $\overline q$ with $0< \overline q < 1$.
\end{prop}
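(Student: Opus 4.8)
The plan is to extract the $d_{n,k}$ from the generating function $B^\bullet(x,w)$ of Lemma~\ref{Lesp21} by applying an appropriate technical lemma (the series-parallel analogue Lemma~\ref{Le7}), exactly as was done for the $2$-connected outerplanar case in the proof of Proposition~\ref{Pro1}. First I would record the relevant singular structure: by Lemma~\ref{Le8.2} the function $E(x)$ has a square-root singularity at $\rho_1$, and by Lemma~\ref{Le8.2bis} (especially in the form \eqref{eqDrep2}) the function $D(x,w)$ has a singular expansion $D(x,w) = G(x,X,w) - H(x,X,w)\sqrt{1-y(x)w}$ with $X = \sqrt{1-x/\rho_1}$ and $y(x) = 1/w_0(x)$, where $G$ and $H$ are analytic and non-zero at $(\rho_1,0,w_0(\rho_1))$. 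Substituting this expansion into the formula
\[
B^\bullet(x,w) = x\left(D(x,w) - \frac{xE(x)}{1+xE(x)}\,D(x,w)\left(1 + \frac{D(x,w)}{2}\right)\right)
\]
shows that $B^\bullet(x,w)$ is of the form required by Lemma~\ref{Le7}: the dominant singularity as $w$ approaches $w_0(x)$ is governed by a \emph{power of a square root} $\sqrt{1-y(x)w}$ (coming from the $D(x,w)^2$ term, whose leading singular part is of order $(1-y(x)w)$, combined with the lower-order pieces), and the dominant singularity in $x$ is the square-root singularity at $\rho_1$; these coalesce at $(\rho_1, w_0(\rho_1))$.

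Once the hypotheses of Lemma~\ref{Le7} are checked, the lemma delivers an asymptotic estimate for $f_{n,k}$ of the shape $c\, g(\rho_1)^k\, \rho_1^{-n}\, k^{a}\, n^{b}(1+O(1/k))$ uniformly for $k\le C\log n$, together with an estimate for $f_n = \sum_k f_{n,k}$; dividing the two gives $d_{n,k} = f_{n,k}/f_n \sim \overline d_k$ with $\overline d_k = c\, w_0(\rho_1)^{-k} k^{-3/2}(1+O(1/k))$, since $g(\rho_1) = D_0(\rho_1)^{-1}$ corresponds to $w_0(\rho_1)$ via $1/g = w_0$. For the uniform bound $d_{n,k} = O(\overline q^k)$ over \emph{all} $n,k$, I would invoke the corresponding rough estimate in Lemma~\ref{Le7} (the analogue of \eqref{eqLe32}), picking $\overline q$ slightly larger than $1/w_0(\rho_1)$; since $w_0(\rho_1)\approx 1.312 > 1$ we indeed get $0 < \overline q < 1$. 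The identification of the probability generating function $\overline p(w) = B_1(w)/B_1(1)$ with the claimed explicit form for $B_1(w)$ follows by reading off the coefficient of the leading singular term: at $x = \rho_1$ the singular part of $B^\bullet(x,w)$ in $X$ has leading coefficient proportional to $E_1 \rho_1^2 D_0(w)^2/(2(1+\rho_1 E_0)^2)$ (the quadratic term in $D(x,w)$ dominates), which matches the stated $B_1(w)$; this can also be cross-checked against \cite{DGN2}.

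The main obstacle is verifying in detail that $B^\bullet(x,w)$ genuinely fits the template of Lemma~\ref{Le7}, i.e.\ isolating precisely which power of $\sqrt{1-y(x)w}$ appears and confirming that the coefficient functions remain analytic and non-vanishing at the critical point $(\rho_1, 0, w_0(\rho_1))$. The subtlety is that $B^\bullet(x,w)$ is a polynomial expression in $D(x,w)$, $E(x)$ and $x$, so its singular expansion in $W = \sqrt{1-w/w_0(x)}$ mixes several orders, and one must track the lowest-order term carefully — in particular, the term $-\tfrac{x^2 E(x)}{2(1+xE(x))}D(x,w)^2$ contributes a term linear in $(1-y(x)w)$ whose coefficient $B_1(x)$ is exactly what controls the asymptotics. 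A secondary technical point is checking that the ``outer'' singularity $x=\rho_1$ really dominates, i.e.\ that $w_0(\rho_1)>1$ (established after Lemma~\ref{Le8.2bis}) so that evaluating at $w$ near $1$ stays in the analytic region and the transfer to $\rho_1$ is legitimate. Apart from these bookkeeping issues the argument is entirely parallel to Proposition~\ref{Pro1}, and the remaining computations of $B_1(w)$ and of the constant $c$ are routine.
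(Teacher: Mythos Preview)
Your overall strategy matches the paper's: show that $B^\bullet(x,w)$ fits the template of Lemma~\ref{Le7} and then read off all the asymptotics. However, the crucial step---verifying that the singular exponent in $W=\sqrt{1-y(x)w}$ is $3/2$ rather than $1/2$---is not handled correctly. You write that the $D(x,w)^2$ term has ``leading singular part of order $(1-y(x)w)$,'' but in fact $D^2=(G-HW)^2=G^2+H^2W^2-2GH\,W$ still carries a $W^1$ term. The point is that this $W^1$ term \emph{cancels} against the $W^1$ term from the linear $D(x,w)$ piece in $B^\bullet$. Writing $B^\bullet=\frac{x}{1+xE}\bigl(D-\tfrac{xE}{2}D^2\bigr)$ and expanding $D=D_0+D_1W+O(W^2)$, the coefficient of $W$ is $D_1\bigl(1-xE(x)D_0(x)\bigr)$; since $D_0(x)=1/(xE(x))$ by Lemma~\ref{Le8.2bis}, this vanishes identically. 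This cancellation is precisely why $B^\bullet$ has the form $G+H(1-yw)^{3/2}$ required by Lemma~\ref{Le7}; without it one would naively expect a $(1-yw)^{1/2}$ singularity and the wrong power of $k$ in $\overline d_k$.

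The paper also offers a route that bypasses the explicit cancellation: from $\partial_w B^\bullet(x,w)=xe^{S(x,w)}$ with $S(x,w)=\tfrac{xE}{1+xE}D(x,w)$, one sees directly that $\partial_w B^\bullet$ has a $(1-yw)^{1/2}$ singularity inherited from $D$, and integrating in $w$ raises it to $(1-yw)^{3/2}$. Finally, be careful not to conflate the two expansions: the $B_1(w)$ in the statement is the coefficient of $X=\sqrt{1-x/\rho_1}$ in the expansion of $B^\bullet(x,w)$ around $x=\rho_1$ (with $w$ generic), not a coefficient in the $W$-expansion; your last paragraph mixes the two.
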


The proof of Proposition~\ref{Pro5} makes use of the following
technical lemma, which we prove in Appendix~B.

\begin{lemma}\label{Le7}
Suppose that a generating function $f(x,w)= \sum_{n,k\ge 0}
f_{n,k} x^n w^k$ with non-negative coefficients $f_{n,k}$ has a
local representation of the form
\[
f(x,w) = G(x,X,w) + H(x,X,w) \left( 1 - y(x)w \right)^{\frac 32},
\]
where $X = \sqrt{1-x/x_0}$, the functions $G(x,v,w)$ and $H(x,v,w)$
are non-zero and analytic at $(x,v,w) = (x_0,0,1/g(x_0))$, and
$y(x)$ is a power series of the square-root type as in (\ref{eqyx}).

Then we have uniformly for $k \le C \log n$ (with an arbitrary
constant $C > 0$)
\[
f_{n,k} = \frac{3 h(x_0) H(x_0,0,1/g(x_0))}{8 \pi} g(x_0)^{k-1}
x_0^{-n} k^{-\frac 32} n^{-\frac 32} \left( 1 + O\left( \frac 1k
\right) \right).
\]
Moreover, for every $\varepsilon > 0$ we have uniformly for all
$n,k\ge 0$
\[
f_{n,k} = O\left( (g(x_0)+\varepsilon)^{k}   \rho^{-n} n^{-\frac
32} \right).
\]

If $g(x_0)<1$, then $f_n = \sum_{k} f_{n,k}$ is given asymptotically
by
\begin{align*}
f_n &= \frac{1}{2\sqrt{\pi}}
\left( G_v(x_0,0,1) + (1-g(x_0))^{3/2}\left(H_v(x_0,0,1) - \frac{3 h(x_0)}{2 (1-g(x_0))} H(x_0,0,1) \right) \right) \\
&\qquad \times x_0^{-n}  n^{-\frac 32} \left( 1 + O\left( \frac 1n
\right)\right),
\end{align*}
and for every $k$ the limit
\[
\overline d_k = \lim_{n\to\infty} \frac{f_{n,k}}{f_n} \] exists.
\end{lemma}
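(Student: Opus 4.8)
The plan is to apply the standard double Hankel-contour machinery of Flajolet and Odlyzko to the bivariate generating function $f(x,w)$, treating the $x$-variable and the $w$-variable asymptotically on an equal footing but with the twist that the ``$w$-singularity'' here is a branch point of exponent $3/2$ rather than a pole. Concretely, I would first extract the coefficient in $w$: write
\[
[w^k] f(x,w) = [w^k]\Bigl( G(x,X,w) + H(x,X,w)\,(1-y(x)w)^{3/2} \Bigr),
\]
and observe that $[w^k] G(x,X,w)$ contributes only an exponentially negligible term (since $G$ is analytic in a polydisc of radius $>1/g(x_0)$ in $w$), while $[w^k]\,(1-y(x)w)^{3/2} = \binom{3/2}{k}(-y(x))^k \sim \frac{3}{4\sqrt\pi}\,k^{-5/2}\,y(x)^k$ by the standard expansion of a binomial coefficient with a non-integer top. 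Combining this with the value of the analytic prefactor at $w = 1/y(x)$ — care is needed because $H$ is evaluated along the boundary $w=1/y(x)$, and one uses that $y(x_0)=g(x_0)$ — yields, to leading order,
\[
[w^k] f(x,w) \approx \frac{3}{4\sqrt\pi}\,H\!\left(x,X,\tfrac1{y(x)}\right) y(x)^k\, k^{-5/2}
\]
uniformly for $x$ near $x_0$ (and $k\to\infty$); the subleading terms in the local $w$-expansion of $H(x,X,w)$ around $w=1/y(x)$ produce the $O(1/k)$ relative error.

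Next I would perform singularity analysis in $x$ on $[w^k] f(x,w)$ as a function of $x$. The dominant singularity is the square-root branch point of $y(x)$ (and of $X$) at $x = x_0$; writing $y(x) = g(x) - h(x)X$ and $y(x)^k = g(x)^k \exp\!\bigl(k\log(1 - (h(x)/g(x))X)\bigr)$, the factor $g(x)^k$ is what produces the exponential rate $g(x_0)^k$, while the expansion of the exponential in powers of $X$, truncated after the linear term $-k\,(h(x_0)/g(x_0))X$, is what, after the transfer theorem $[x^n] X \sim \frac{1}{2\sqrt\pi}\,x_0^{-n} n^{-3/2}$, contributes the extra factor proportional to $k$; note $H(x_0,0,1/g(x_0))$ is then the relevant constant. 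One has to be careful that the higher-order terms $k^2 X^2$, etc., remain controlled in the range $k \le C\log n$: since $X\sim n^{-1/2}$ on the Hankel contour, $k^2 X^2 \sim (\log n)^2/n \to 0$, so these are genuinely lower order; this is exactly the kind of uniformity estimate that makes the ``$k\le C\log n$'' hypothesis essential. Putting the two extractions together gives
\[
f_{n,k} \sim \frac{3}{4\sqrt\pi}\cdot\frac{h(x_0)}{2\sqrt\pi}\cdot H\!\left(x_0,0,\tfrac1{g(x_0)}\right) g(x_0)^{k-1} x_0^{-n}\, k^{-3/2} n^{-3/2}
= \frac{3\,h(x_0)H(x_0,0,1/g(x_0))}{8\pi}\,g(x_0)^{k-1} x_0^{-n} k^{-3/2} n^{-3/2},
\]
matching the claimed formula; the $O(1/k)$ error comes from the $w$-extraction and the uniform control of the $x$-extraction. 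The uniform-in-all-$(n,k)$ bound $f_{n,k} = O((g(x_0)+\varepsilon)^k x_0^{-n} n^{-3/2})$ follows by the same contour estimates with a cruder bound: deform the $w$-contour to radius $1/(g(x_0)+\varepsilon)$ (legitimate since $G,H$ are analytic there and $(1-y(x)w)^{3/2}$ is too, as $y(x)w$ stays away from $1$), bound $|H|$ and $|(1-yw)^{3/2}|$ trivially, and apply singularity analysis in $x$ to the remaining $X$-type singularity.

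For the last assertion, when $g(x_0)<1$ we set $w=1$: then $f_n = f(x,1)$ has no $w$-singularity (since $1 < 1/g(x_0) = 1/y(x_0)$, the factor $(1-y(x))^{3/2}$ is analytic and nonzero near $x_0$), so the only singularity of $f(x,1)$ in $x$ is the square-root branch point at $x_0$ inherited from $X$. Expanding $G(x,X,1) + H(x,X,1)(1-y(x))^{3/2}$ in powers of $X$ about $X=0$ and reading off the coefficient of $X$ — here $(1-y(x))^{3/2}$ itself contributes via $\frac{d}{dX}(1-g(x)+h(x)X)^{3/2}\big|_{X=0} = \frac32(1-g(x_0))^{1/2}h(x_0)$, so the coefficient of $X$ is $G_v(x_0,0,1) + (1-g(x_0))^{3/2}H_v(x_0,0,1) + \frac32 (1-g(x_0))^{1/2} h(x_0) H(x_0,0,1)$, i.e.\ exactly the displayed bracket up to a sign — and applying the transfer theorem gives the stated $f_n$; note the sign conventions must be tracked so that the bracket in the statement (with $-\frac{3h(x_0)}{2(1-g(x_0))}$) comes out, using $(1-g(x_0))^{3/2}\cdot\frac{-3h(x_0)}{2(1-g(x_0))} = -\frac32(1-g(x_0))^{1/2}h(x_0)$, which matches up to the overall orientation of $X = +\sqrt{1-x/x_0}$ versus $-\sqrt{1-x/x_0}$ in the local representation (\ref{eqyx}). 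Finally, existence of $\overline d_k = \lim_n f_{n,k}/f_n$ is immediate: both $f_{n,k}$ (for fixed $k$) and $f_n$ have pure square-root singularities at $x_0$, so both behave like $(\text{const})\cdot x_0^{-n} n^{-3/2}$, and the ratio converges to the ratio of the constants. The main obstacle I anticipate is not any single step but the bookkeeping of the uniformity: making the saddle-point/Hankel estimates in $x$ uniform over the whole range $k\le C\log n$ simultaneously, i.e.\ showing the error terms from truncating the $X$-expansion of $y(x)^k$ and from the $w$-extraction are genuinely $O(1/k)$ and $O((\log n)^2/n)$ respectively and do not interact badly — this is precisely why the proof is deferred to Appendix~B and handled by a dedicated lemma rather than quoted off the shelf.
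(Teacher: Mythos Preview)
Your proposal is correct and reaches the right constant with sound reasoning. The organizational difference from the paper's own proof is worth noting: the paper (following the template of the proof of Lemma~\ref{Le3}) performs a \emph{simultaneous} double Hankel-contour integral in $(x,w)$, first rewriting
\[
(1-y(x)w)^{3/2} = \Bigl(1-\tfrac{w}{w_0}\Bigr)^{3/2}\!\left(1 + \frac{(3/2)\,h(x_0)w_0\,X}{1-w/w_0} + O\!\Bigl(\frac{X^2}{|w-w_0|}\Bigr)\right)
\]
and then isolating the cross term $\tfrac{3}{2}h(x_0)w_0\, H(x_0,0,w_0)\,(1-w/w_0)^{1/2}\,X$ as the joint leading singular part, to which the two univariate transfers are applied as a product. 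You instead proceed \emph{sequentially}: first extract $[w^k]$ (picking up $y(x)^k k^{-5/2}$ from the binomial), then expand $y(x)^k$ in $X$ and observe that differentiating $y(x)^k$ once in $X$ supplies a compensating factor of $k$, turning $k^{-5/2}$ into $k^{-3/2}$. Both routes are standard and give the same constant $\tfrac{3}{8\pi}h(x_0)H(x_0,0,1/g(x_0))$; the paper's simultaneous contour has the advantage that uniformity in $k\le C\log n$ is built into the choice of the double contour (exactly as in Lemma~\ref{Le3}), whereas in your sequential version one must verify that the univariate $w$-transfer holds uniformly for $x$ on the $x$-Hankel contour---you correctly flag this as the main bookkeeping issue. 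Your treatment of the uniform bound, of $f_n$ via the $X$-expansion at $w=1$, and of the existence of $\overline d_k$ matches the paper's (and your observation about the sign in front of the $\tfrac{3}{2}(1-g(x_0))^{1/2}h(x_0)H(x_0,0,1)$ term is well taken).
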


\begin{proof}[Proof of Proposition \ref{Pro5}]
It is not difficult to check that $B^\bullet(x,w)$ fits precisely
into the assumptions of Lemma~\ref{Le7}. By using
Lemma~\ref{Lesp22} and Equation~(\ref{eqDxw-exp}) it follows that
$B^\bullet(x,w)$ has a singular representation of the form
\begin{equation}\label{eqLe101}
B^\bullet(x,w) = B_0^\bullet(x) + B_2^\bullet(x) W^2 +
B_3^\bullet(x) W^3  + \cdots,
\end{equation}
where $W = \sqrt{1- w/{w_0(x)}}$ and the coefficients
$B_j^\bullet(x)$ are analytic functions in $x$ and $E(x)$. For
example, we have
\begin{align*}
B_0^\bullet(x) &= \frac 1{2E(x)(1+xE(x))}, \\
B_2^\bullet(x) &= -\frac {x^2E(x)D_1(x)^2}{2(1+xE(x))}, \\
B_3^\bullet(x) &= -\frac {x^2E(x)D_1(x)D_2(x)}{1+xE(x)}.
\end{align*}
Clearly, this representation can be rewritten as
\begin{equation}\label{eqBbulletrep}
B^\bullet(x,w) = G(x,X,w) + H(x,X,w)\left( 1 - y(x)
w\right)^{3/2},
\end{equation}
where $X = \sqrt{1 - x/\rho_1}$ and $y(x) = 1/w_0(x)$.

There is an alternative way to derive the same result without making
use of the explicit representation for $B^\bullet(x,w)$.  Start from
(\ref{eqDrep2}) to derive a corresponding representation for
\[
\frac{\partial}{\partial w} B^\bullet (x,w) = \sum_{k\ge 1} k B_k(x)
w^k = x e^{S(x,w)} = \widetilde G(x,X,w) - \widetilde H(x,X,w)\left(
1 - y(x) w\right)^{1/2}.
\]
By expanding the functions $\widetilde G(x,X,w)$ and $\widetilde
H(x,X,w)$ in $W = \sqrt{1-y(x)w}$ this leads to a local
representation of the form
\[
\frac{\partial}{\partial w} B^\bullet (x,w) = G_0(x,X) + G_1(x,X)W
+ G_2(x,X)W^2 + \cdots.
\]
Finally, since
\[
\int W^\ell\, dw = - \frac 2{(\ell+2) y(x)} W^{\ell+1} + C,
\]
we obtain a representation for $B^\bullet (x,w)$ of the form
\begin{equation}\label{eqBbulletrep2}
B^\bullet (x,w) = \widetilde G_0(x,X) +  \widetilde G_2(x,X)W^2 +
\widetilde G_3(x,X)W^3 + \cdots,
\end{equation}
where
\[
\widetilde G_0(x,X) = \int_0^{1/y(x)} \frac{\partial}{\partial w}
B^\bullet (x,w)\, dw
\]
and
\[
\widetilde G_j(x,X) = - \frac{2G_{j-2}(x,X)}{jy(x)}
\]
for $j\ge 2$. Of course, (\ref{eqBbulletrep2}) rewrites to
(\ref{eqBbulletrep}).

Anyway, this shows that $B^\bullet (x,w)$ has precisely the form of
$f(x,w)$ in Lemma~\ref{Le7}, and that $w(\rho_1)>1$ implies that
$y(\rho_1)=g(\rho_1)<1$. Hence, all the properties claimed follow.
The representation for $\overline p(w)$ and the asymptotic expansion
of $\overline d_k$ can be also found in \cite{DGN2}.
\end{proof}

\begin{prop}\label{Pro6}
Let $d_{n,k,\ell}$ denote the probability that two different (and
ordered) randomly selected vertices in a $2$-connected
SP graph with $n$ vertices have degrees $k$ and
$\ell$. Then we have uniformly for $2\le k,\ell \le C \log n$
\begin{equation}%\label{eqPro13}
d_{n,k,\ell} \sim \overline d_k \overline d_\ell.
\end{equation}
Furthermore, we have uniformly for all $n,k\ge 1$
\begin{equation}%\label{eqPro14}
d_{n,k,\ell}= O(\overline q^ {k+\ell})
\end{equation}
for some real number $\overline q$ with $0< \overline q < 1$.
\end{prop}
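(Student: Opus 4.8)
The plan is to mirror the structure of the proof of Proposition~\ref{Pro5}, now using the double-rooted generating function $B^{\bullet\bullet}(x,w,t)$ from Lemma~\ref{Lesp22} and the new technical lemma for the triple-indexed case (the analogue of Lemma~\ref{Le7} for three variables, which will play here the role that Lemma~\ref{Le4} played for 2-connected outerplanar graphs). First I would feed the closed-form expression from Lemma~\ref{Lesp22} into the local expansion~(\ref{eqDxw-exp}) of $D(x,w)$ (and the corresponding expansions of $D(x,t)$), checking that $B^{\bullet\bullet}(x,w,t)$ has a singular representation whose dominant singular part, as $w\to w_0(x)$ and $t\to w_0(x)$, factors as a product of a square-root singularity in $w$ and one in $t$, sitting above the square-root singularity of $E(x)$ at $x=\rho_1$. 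Concretely, each occurrence of $(1-xE(x)D(x,w))$ in the denominator of Lemma~\ref{Lesp22} becomes (up to analytic nonvanishing factors) a power of $\sqrt{1-y(x)w}$ with $y(x)=1/w_0(x)$, and similarly for $t$; collecting the branches one obtains a representation
\[
B^{\bullet\bullet}(x,w,t) = \frac{G(x,X,w,t)}{(1-y(x)w)^{a}(1-y(x)t)^{a}}
\]
or the appropriate variant with a $G$ plus powers of $(1-y(x)w)^{3/2}(1-y(x)t)^{3/2}$, with $X=\sqrt{1-x/\rho_1}$ and $G$ analytic and non-zero at the critical point $(\rho_1,0,w_0(\rho_1),w_0(\rho_1))$.

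Next I would invoke the relevant technical lemma from Appendix~B to extract $f_{n,k,\ell}=[x^nw^kt^\ell]B^{\bullet\bullet}(x,w,t)$ and $f_n=\sum_{k,\ell}f_{n,k,\ell}$ uniformly for $k,\ell\le C\log n$, exactly as in the proof of Proposition~\ref{Pro2}. Because the singular structure in $w$ is the \emph{same} as in the univariate case treated in Proposition~\ref{Pro5}, and likewise in $t$, the ratio $d_{n,k,\ell}=f_{n,k,\ell}/f_n$ will come out as $w_0(\rho_1)^{-(k+\ell)}$ times a polynomial factor in $k,\ell$ that factors as (a function of $k$)$\times$(a function of $\ell$); comparing with the formula~(\ref{eqdkSP2c}) for $\overline d_k$ then gives $d_{n,k,\ell}\sim\overline d_k\overline d_\ell$. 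In fact the cleanest route is the one used in the Remark after Theorem~\ref{Th2COP}: identify the asymptotic leading term of $B^{\bullet\bullet}(x,w,t)$, divide by the leading term of $B''(x)$, pass to the limit via \cite[Lemma~3.1]{DGN2}, and observe directly that $\overline p(w,t)=\overline p(w)\overline p(t)$, which both yields $\overline d_{k,\ell}=\overline d_k\overline d_\ell$ and provides the ``automatic'' proof of $d_{n,k,\ell}\sim\overline d_k\overline d_\ell$. The uniform bound $d_{n,k,\ell}=O(\overline q^{k+\ell})$ follows from the corresponding $O$-estimate in the technical lemma (with $\overline q=g(\rho_1)+\varepsilon=1/w_0(\rho_1)+\varepsilon<1$, since $w_0(\rho_1)\approx1.312267>1$), exactly as~(\ref{eqPro14}) followed from~(\ref{eqLe34}).

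The main obstacle I anticipate is purely bookkeeping: verifying that the rather involved expression for $B^{\bullet\bullet}(x,w,t)$ in Lemma~\ref{Lesp22} really does fit the hypotheses of the Appendix~B lemma, i.e.\ that after substituting the expansions of $D(x,w)$, $D(x,t)$, $E(x)$ one genuinely gets an analytic, non-vanishing $G$ at the critical point and that no spurious extra singularity (e.g.\ from the factors $1+w$ in the denominators, or from $1-2xE(x)^2-x^2E(x)^3$) interferes with the dominant one. Here the inequality $w_0(\rho_1)>1$ is what guarantees that the singularity $w=w_0(x)$ in the $w$-variable is reached strictly before $w=1$ causes trouble, just as $g(\rho_1)<1$ was the key point in Proposition~\ref{Pro5}; and the factor $1-2xE(x)^2-x^2E(x)^3$, being the derivative-type denominator already seen in $D_2(x,1,t)$, is analytic and non-zero at $x=\rho_1$ by~(\ref{eqrhoE0}) and so is harmless. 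Once these checks are in place, the conclusion is immediate, and I would simply write ``the proof is completely analogous to that of Proposition~\ref{Pro5}, using $B^{\bullet\bullet}(x,w,t)$ in place of $B^\bullet(x,w)$ and the triple-variable technical lemma of Appendix~B in place of Lemma~\ref{Le7}.''
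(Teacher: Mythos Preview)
Your overall strategy is the paper's, but there is one genuine error that would derail the argument. You assert that the factor $1-2xE(x)^2-x^2E(x)^3$ is ``analytic and non-zero at $x=\rho_1$ by~(\ref{eqrhoE0}) and so is harmless.'' Equation~(\ref{eqrhoE0}) says exactly the opposite: $\rho_1E_0^2(2+\rho_1E_0)=1$ means $1-2\rho_1E_0^2-\rho_1^2E_0^3=0$, so this denominator \emph{vanishes} at $x=\rho_1$ and contributes a simple pole in $X=\sqrt{1-x/\rho_1}$. That $1/X$ pole is not a nuisance; it is precisely what puts $B^{\bullet\bullet}(x,w,t)$ into the shape demanded by Lemma~\ref{Le8}, namely
\[
\frac{1}{X}\Bigl(G_1+G_2(1-y(x)w)^{1/2}+G_3(1-y(x)t)^{1/2}+G_4(1-y(x)w)^{1/2}(1-y(x)t)^{1/2}\Bigr),
\]
which is neither of the two forms you guessed. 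Without the $1/X$ you would land in the wrong singularity class and the transfer lemma would not apply.

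A second, smaller gap: Lemma~\ref{Lesp22} does not give $B^{\bullet\bullet}(x,w,t)$ explicitly, only $w\,\partial_w B^{\bullet\bullet}(x,w,t)$. The paper therefore follows the ``second idea'' already used in the proof of Proposition~\ref{Pro5}: one first shows that $\partial_w B^{\bullet\bullet}$ has a local representation of the form $\frac{1}{XW}(H_1+H_2W+H_3T+H_4WT)$ (here the $1/W$ comes from the $1/(1-xE(x)D(x,w))$ factor, as you correctly noted, and the $1/X$ from the factor you misidentified), and then integrates term by term in $W$ to recover the representation of $B^{\bullet\bullet}$ required by Lemma~\ref{Le8}. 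You should make this integration step explicit rather than speaking of a ``closed-form expression'' for $B^{\bullet\bullet}$.
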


The proof of the proposition makes use of the following lemma.

\begin{lemma}\label{Le8}
Let $f(x,w,t) = \sum_{n,k,\ell} f_{n,k,\ell} x^n w^k t^\ell$ be a
generating function of non-negative numbers $f_{n,k,\ell}$, and
suppose that $f(x,w,t)$ can be represented as
\begin{align}\label{eqLe81}
&f(x,w,t) = \frac 1X \Bigl( G_1(x,X,w,t) + G_2(x,X,w,t) \left( 1- y(x)w \right)^{1/2} \\
&\qquad+ G_3(x,X,w,t) \left( 1- y(x)t \right)^{1/2} +
G_4(x,X,w,t)\left( 1- y(x)w \right)^{1/2} \left( 1- y(x)t
\right)^{1/2} \Bigr), \nonumber
\end{align}
where $X = \sqrt{1-x/x_0}$, the functions $G_j(x,v,w)$ are analytic
for at $(x,v,w,t) = (x_0,0,1/g(x_0),1/g(x_0))$ for $j= 1,2,3,4$, and
non-zero for $j=4$, and $y(x)$ is a power series of the square-root
type as in (\ref{eqyx}).

Then, we have uniformly for $k,\ell \le C \log n$ (with an
arbitrary constant $C > 0$)
\[
f_{n,k,\ell} =  \frac{G_4\left(x_0,0,\frac 1{g(x_0)}, \frac
1{g(x_0)}\right)}{4 \pi^{3/2}} g(x_0)^{k+\ell} x_0^{-n}
(k\ell)^{-\frac 32} n^{-\frac 12} \left( 1 + O\left( \frac 1{\sqrt
k} + \frac 1{\sqrt \ell} \right) \right).
\]
Moreover, for every $\varepsilon > 0$ we have uniformly for all
$n,k\ge 0$
\[
f_{n,k,\ell} = O\left( (g(x_0)+\varepsilon)^{k+\ell}  x_0^{-n}
n^{-\frac 12} \right).
\]

If $g(x_0)<1$, then $f_n = \sum_{k,\ell} f_{n,k,\ell}$ is given
asymptotically  by
\begin{align*}
f_n &= \frac 1{\sqrt{\pi}} \left( G_1 + \left( G_2+G_3 \right)
\sqrt{1-g(x_0)}
+ G_4 (1-g(x_0))\right) \\
&\qquad \times x_0^{-n}  n^{-\frac 12} \left( 1 + O\left( \frac 1n
\right)\right),
\end{align*}
in which $G_j$ ($j= 1,2,3,4$) has to be evaluated at $(x,v,w,t) =
\left(x_0,1,\frac 1{g(x_0)},\frac 1{g(x_0)}\right)$, and for every
pair $(k,\ell)$ the limit
\[
d_{k,\ell} = \lim_{n\to\infty} \frac{f_{n,k,\ell}}{f_n} \] exists.
\end{lemma}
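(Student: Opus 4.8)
The plan is to extract the coefficient $f_{n,k,\ell}$ from the representation (\ref{eqLe81}) by triple Cauchy integration, reusing the machinery that proves Lemmas~\ref{Le3}--\ref{Le7}. First I would fix $w$ and $t$ and extract the coefficient of $x^n$. Because $f(x,w,t)$ has a factor $1/X = (1-x/x_0)^{-1/2}$ together with terms of the form $(1-y(x)w)^{1/2}$ and $(1-y(x)t)^{1/2}$ (and $y(x)$ itself of square-root type), the singularity in $x$ at $x_0$ is governed, after expanding $y(x)=g(x)-h(x)X$ and the $G_j$ in powers of $X$, by half-integer powers of $X$; standard singularity analysis (Flajolet--Odlyzko transfer, \cite{FO}) then gives the $x_0^{-n} n^{-1/2}$ behaviour, with the constant coming from the coefficient of the $X^{-1}$ term, i.e.\ from the $G_1$ piece evaluated at $X=0$ — but crucially the $w,t$-dependence of that constant still carries the $(1-g(x_0)w)$ and $(1-g(x_0)t)$ denominators hidden inside the $G_j$'s, since on the diagonal $w,t$ close to $1/g(x_0)$ these are exactly the sources producing the $(k\ell)^{-3/2}$ growth in $k,\ell$.

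Next I would do the $w$- and $t$-extractions. After the $x$-integration we are left (to leading order) with a function of $w$ and $t$ that is a sum of products of factors $(1-g(x_0)w)^a$, $(1-g(x_0)t)^b$ with $a,b\in\{0,1/2\}$, multiplied by functions analytic at the critical values. The only term that is singular at $w=1/g(x_0)$ \emph{and} at $t=1/g(x_0)$ is the $G_4$ term, which contributes $(1-g(x_0)w)^{1/2}(1-g(x_0)t)^{1/2}$; extracting $[w^k][t^\ell]$ of a single square-root singularity $(1-g(x_0)w)^{1/2}$ gives, by the standard formula, $g(x_0)^k k^{-3/2}$ up to constants and a $1+O(1/\sqrt k)$ error. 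The $G_1,G_2,G_3$ terms are either analytic in $w$ or in $t$ (or both), hence contribute exponentially smaller coefficients in at least one of $k,\ell$, so they are absorbed into the error term for the main asymptotics. This is exactly why the main constant is $G_4(x_0,0,1/g(x_0),1/g(x_0))$ divided by $4\pi^{3/2}$: the $1/\sqrt\pi$ comes from the $n^{-1/2}$ transfer, and each $(k\ell)^{-3/2}$ square-root transfer supplies a $1/(2\sqrt\pi)$, giving $1/(4\pi^{3/2})$ overall. For the "moreover" bound one replaces $g(x_0)$ by $g(x_0)+\varepsilon$ to dominate all of $y(x)$ in a slightly larger disc and trivially bounds the $w,t$-coefficients, and for $f_n=\sum_{k,\ell}f_{n,k,\ell}$ one simply sets $w=t=1$ before the $x$-extraction, noting $g(x_0)<1$ guarantees $1/g(x_0)>1$ so all four terms contribute (with $(1-g(x_0))^{0,1/2,1/2,1}$ factors) and none blows up.

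To make this rigorous and uniform for $k,\ell\le C\log n$ I would, as in the earlier lemmas, not take literal Hankel contours to infinity but couple the $x$-contour to $k,\ell$: integrate $x$ along a Hankel-type contour at distance $\sim 1/n$ from $x_0$, and $w,t$ along contours at distance $\sim 1/k$, $1/\ell$ from $1/g(x_0)$, controlling the tails by the size of $G_j$ on the enlarged polydisc $D'$ where they are analytic. The interplay is that on the $w$-contour $|g(x_0)w|$ stays bounded away from $1$ except near the critical point, so the $(1-y(x)w)^{1/2}$ factors stay $O(1)$ and the dominant contribution localizes; this is precisely the argument behind Lemmas~\ref{Le4} and \ref{Le6}, adapted to exponent $3/2$ instead of $2$ or the exponential case.

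The main obstacle I expect is \emph{not} any single estimate but the bookkeeping of which of the sixteen-odd cross terms (four $G_j$'s times the $X$-expansion times the $W$-expansions in $w$ and in $t$) actually contribute to the stated leading order, and verifying that everything else is genuinely of smaller order \emph{uniformly} in the range $k,\ell\le C\log n$. In particular one must check that the subleading terms in the $X$-expansion of $G_4$ — which multiply lower powers of $X$ and hence give $n^{-3/2}$ or smaller in $n$ but could a priori carry larger $w,t$-singularities — do not spoil the $(k\ell)^{-3/2}$ main term; and one must confirm that the error terms from the $x$-integration ($O(1/n)$) and from the two $w,t$-integrations ($O(1/\sqrt k)$, $O(1/\sqrt\ell)$) combine into the claimed $1+O(1/\sqrt k+1/\sqrt\ell)$ rather than something worse. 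Since all of this is a routine (if tedious) variation on the proofs already carried out in Appendix~B for Lemmas~\ref{Le3}--\ref{Le7}, I would present the proof of Lemma~\ref{Le8} there as well, emphasizing only the points where the exponent $3/2$ and the presence of the overall $1/X$ factor change the computation.
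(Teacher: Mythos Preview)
Your approach is essentially the same as the paper's, which also identifies the asymptotic leading term of $f(x,w,t)$ as $\dfrac{G_4(x_0,0,w_0,w_0)}{X}\left(1-\frac{w}{w_0}\right)^{1/2}\left(1-\frac{t}{w_0}\right)^{1/2}$ and treats everything else as a routine combination of the arguments for Lemmas~\ref{Le4} and~\ref{Le7}. A couple of verbal slips in your sketch (calling the square-root factors ``denominators'', and the phrase ``from the $G_1$ piece'' where in fact all four $G_j$ contribute to the coefficient of $X^{-1}$) do not affect the substance.
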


\begin{proof}[Proof of Proposition \ref{Pro6}]
Since we do not have an explicit expression for
$B^{\bullet\bullet}(x,w,t)$, we proceed using the second idea in the
proof of Proposition~\ref{Pro5}.  We start with the explicit
expression for $\frac{\partial}{\partial w}B^{\bullet \bullet}(x,w)$
from Lemma~\ref{Lesp22}. By using the local singular expansion
(\ref{eqDrep2}) for $D(x,w)$,  it follows directly that
\[
D(x,w)D(x,t)
\left( (1+wt)\exp\left( \frac{x}{1+xE(x)} D(x,w)D(x,t) \right) -1 \right)\\
\]
and that $D(x,w)^2D(x,t)^2$ can be represented as
\[
G_1(x,X,w,t) + G_2(x,X,w,t)W + G_3(x,X,w,t)T + G_4(x,X,w,t)WT,
\]
where $T = \sqrt{1-t/w_0(x)}$, and $w$ and $t$ vary in a
$\Delta$-domain corresponding to the common singularity $w_0(x)
=1/y(x)$. Furthermore, since $D_0(x) = 1/(xE(x))$ we have
\[
\frac 1{1-xE(x)D(x,w)} = -\frac{D_0(x)}{D_1(x) W}
\left( 1- \frac{D_2(x)}{D_1(x)}W + O(W^2) \right).
\]
Finally, since $\rho_1 E_0^2\left( 2 + \rho_1 E_0\right) = 1$
(see (\ref{eqrhoE0})) we have the expansion
\[
\frac{1}{1-2xE(x)^2-x^2E(x)^3} =
\frac{F_{-1}}X + F_0 + F_1 X + O(X^2),
\]
where $X = \sqrt{1 - x/\rho_1}$.

Consequently, the function $\frac {\partial}{\partial w}
B^{\bullet\bullet}(x,w,t)$ can be represented as
\[
\frac 1{XW} \left( H_1(x,X,w,t) + H_2(x,X,w,t)W + H_3(x,X,w,t)T +
H_4(x,X,w,t)WT \right),
\]
with analytic functions $H_j(x,v,w,t)$. Hence, by rewriting this
representation as a series in $W$, integration with respect to
$w$ leads, as in the proof of Proposition~\ref{Pro5}, to a representation of
$B^{\bullet\bullet}(x,w,t)$ of the form
\begin{align*}
&B^{\bullet\bullet}(x,w,t) \\
&= \frac 1X \left(\widetilde H_1(x,X,w,t) + \widetilde H_2(x,X,w,t)W
+ \widetilde H_3(x,X,w,t)T + \widetilde H_4(x,X,w,t)WT   \right).
\end{align*}
Hence, we can apply Lemma~\ref{Le8} to obtain all the claimed
properties.
\end{proof}

We are ready to prove the main result in this section.

\begin{theo}\label{Th2csp}
Let $\Delta_n$ denote the maximum degree of a random labelled
$2$-connected SP graph with $n$ vertices.
Then
\[
\frac{\Delta_n}{\log n} \to c \qquad\mbox{in probability}
\]
and
\[
\mathbb{E}\, \Delta_n \sim  c \log n
\qquad (n\to\infty),
\]
where $c = \displaystyle\frac 1{\log (1/q)}\approx 3.679772$, and
$q\approx 0.7620402$ is given by
\[ q = \left( \left(1+\frac 1{\rho_1E_0})\right) \exp\left(-\frac{1}{1+\rho_1E_0}\right)-1 \right)^{-1}, \]
with $\rho_1$ and $E_0$ as in Lemma~\ref{Le8.2}.
\end{theo}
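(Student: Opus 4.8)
The plan is to obtain this result as a direct application of the Master Theorem~\ref{Th1} to the class of $2$-connected series-parallel graphs, the three hypotheses having essentially been assembled already in Propositions~\ref{Pro5} and~\ref{Pro6}. Concretely, one takes $q = 1/w_0(\rho_1)$, so that the asserted constant is $c = 1/\log(1/q) = 1/\log w_0(\rho_1)$.

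First I would check Condition~1 of Theorem~\ref{Th1}. By the estimate~(\ref{eqdkSP2c}) from Proposition~\ref{Pro5} we have $\overline d_k = c\, w_0(\rho_1)^{-k} k^{-3/2}(1+O(1/k))$, hence $\log \overline d_k \sim -k\log w_0(\rho_1) = k\log q$ with $q = 1/w_0(\rho_1)$. The requirement $0<q<1$ is precisely the inequality $w_0(\rho_1)>1$, which was recorded (numerically, $w_0(\rho_1)\approx 1.312267$) just after Lemma~\ref{Le8.2bis}. Condition~2 is exactly the content of Propositions~\ref{Pro5} and~\ref{Pro6}: the former gives $d_{n,k}\sim\overline d_k$ uniformly for $k\le C\log n$, the latter gives $d_{n,k,\ell}\sim\overline d_k\overline d_\ell$ uniformly for $2\le k,\ell\le C\log n$, and the remaining boundary cases $k=1$ or $\ell=1$ are harmless since $\overline d_1=0$ for $2$-connected graphs while the uniform bound $d_{n,k,\ell}=O(\overline q^{k+\ell})$ still holds there. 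Condition~3 is the pair of estimates $d_{n,k}=O(\overline q^k)$ and $d_{n,k,\ell}=O(\overline q^{k+\ell})$ stated in the same two propositions.

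Once the three conditions are verified, Theorem~\ref{Th1} yields $\Delta_n/\log n\to 1/\log(1/q)$ in probability and $\mathbb{E}\,\Delta_n\sim(\log n)/\log(1/q)$. It then remains only to make $q$ explicit. In the representation~(\ref{eqDrep2}) the relevant singularity in $w$ is $w=1/y(x)$ with $y(x)=1/w_0(x)$, and by Lemma~\ref{Le8.2bis} we have $w_0(x)=(1+1/(xE(x)))\exp(-1/(1+xE(x)))-1$. Evaluating at $x=\rho_1$, where $E(\rho_1)=E_0$, gives
\[
q = \frac{1}{w_0(\rho_1)} = \left(\left(1+\frac{1}{\rho_1 E_0}\right)\exp\left(-\frac{1}{1+\rho_1 E_0}\right)-1\right)^{-1},
\]
which is the formula in the statement; substituting the numerical values of $\rho_1$ and $E_0$ from Lemma~\ref{Le8.2} produces $q\approx 0.7620402$ and $c=1/\log(1/q)\approx 3.679772$.

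Since all the genuinely analytic work has been carried out in the preceding sections, there is no real obstacle remaining. The only point needing a moment's attention is to confirm that $g(\rho_1) = y(\rho_1) = 1/w_0(\rho_1) < 1$, which is again the inequality $w_0(\rho_1)>1$; this is also what guarantees that the ``$f_n$'' parts of Lemmas~\ref{Le7} and~\ref{Le8} apply, so that the limits $\overline d_k$ and $\overline d_{k,\ell}$ underlying Conditions~1 and~2 indeed exist.
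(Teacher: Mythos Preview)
Your proposal is correct and follows exactly the paper's approach: the paper's proof is the single sentence ``a direct application of Propositions~\ref{Pro5}, \ref{Pro6} and Theorem~\ref{Th1}.'' You have simply spelled out, in more detail than the paper does, how the three conditions of Theorem~\ref{Th1} are verified and how the explicit formula for $q$ drops out of Lemma~\ref{Le8.2bis}.
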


\begin{proof}[Proof of Theorem~\ref{Th2csp}]
The proof is a direct application of Propositions~\ref{Pro5},
\ref{Pro6} and Theorem~\ref{Th1}.
\end{proof}

\begin{rem}
It is also possible to prove in this case that $\overline p(w,t) =
\overline p(w) \overline p(t)$.  However, it is much more technical
than in the case of $2$-connected outerplanar graphs.  For the sake
of conciseness we skip the details.
\end{rem}

%=============================================
\subsection{Connected Series-Parallel Graphs}
%=============================================

We first analyze the equation for $C'(x)$. From \cite[Theorem
3.7]{BGKN-series-parallel}, the number $c_n$ of labelled connected
SP graphs is given asymptotically by
\[
c_n = c\cdot
 n^{-\frac 52} \rho_2^{-n} n! \left( 1+ O\left( \frac 1n\right)\right),
\]
where $\rho_2 \approx 0.11021$ and $c \approx 0.0067912$.

%\begin{proof}
%The function
%$v(x) = xC'(x)$ satisfies the functional equation
%$v(x) = xe^{B'(v(x))}$. It turns out that we can apply
%\cite[Theorem~2.19]{Drm-randomtrees}. The system
%of equations $v_0 = \rho_2 e^{B'(v_0)}$ and
%$1 = \rho_2 e^{B'(v_0)} B''(v_0)$ has a solution
%$\rho_2 \approx 0.11021$ and $v_0 \approx 0.1279695 < \rho_1$
%so that

The function
$v(x) = xC'(x)$ has a local representation of the form
\begin{equation}\label{eqxCdashx}
x C'(x) = g(x) - h(x) \sqrt{1- \frac x{\rho_2}}.
\end{equation}
Thus, from an analytic point of view, we are in the same situation as
in the outerplanar case. Recall that
\[ C^\bullet(x,w) = e^{ B^\bullet(x C'(x),w)}.\]
The singularity $\rho_1$ of $B'(x)$ has no influence on the singular
behavior of $C'(x)$, since it is not hard to check that
$v(\rho_2)=\rho_2C'(\rho_2)<\rho_1$.
Consequently we obtain
corresponding representations for
\[
C(x) = g_2(x) + h_2(x) \left( 1- \frac x{\rho_2}\right)^{\frac 32}
\]
and the asymptotic expansion for $c_n$.

The final step is to prove the following properties.
\begin{prop}\label{Pro7}
Let $d_{n,k}$ denote the probability that a randomly selected
vertex in a connected SP graph with $n$ vertices
has degree $k$. Then we have uniformly for $k\le C \log n$
\begin{equation}%\label{eqPro11}
d_{n,k} \sim \overline d_k,
\end{equation}
where $\overline d_k$ denotes the asymptotic degree distribution of
connected SP graphs encoded by the generating function
\[
\overline p(w) =  \sum_{k\ge 2} \overline d_k w^k  = \rho_2 e^{ B^\bullet(v_0,w)}
\frac{\partial}{\partial x} B^\bullet(v_0,w).
\]
where $v_0=v(\rho_2)$, and is given asymptotically  by
\begin{equation}\label{eqcpwSPc}
\overline d_k = c'\cdot k^{-\frac 32} w_0(v_0)^{-k}
\left( 1 + O\left( \frac 1k \right) \right),
\end{equation}
where $c' \approx 3.5952391$.

Furthermore, we have uniformly for all $n,k\ge 1$
\begin{equation}%\label{eqPro12}
d_{n,k}= O(\overline q^ k)
\end{equation}
for some real $\overline q$ with $0< \overline q < 1$.
\end{prop}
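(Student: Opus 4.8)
The plan is to show that the generating function $C^{\bullet}(x,w)$ of rooted connected SP graphs has exactly the structure required by Lemma~\ref{Le7}, and then to extract all three assertions of the proposition from the conclusions of that lemma, together with \cite[Lemma~3.1]{DGN2} for the closed form of $\overline p(w)$. This mirrors the treatment of connected outerplanar graphs in Proposition~\ref{Pro3}, but is conceptually simpler: for $2$-connected outerplanar graphs $B^{\bullet}$ has a pole in $w$, so $C^{\bullet}=e^{B^{\bullet}(xC'(x),w)}$ acquires an essential singularity and one must invoke Lemma~\ref{Le5}; here, by Proposition~\ref{Pro5} and~(\ref{eqBbulletrep}), $B^{\bullet}$ for $2$-connected SP graphs has the milder form $G+H(1-y(z)w)^{3/2}$, which stays bounded at the critical value $w=1/y(z)$, so exponentiating it does not produce an essential singularity and Lemma~\ref{Le7} still applies.

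The main step is to identify the shape of $C^{\bullet}$. By~(\ref{eqxCdashx}) the function $v(x)=xC'(x)$ has a square-root singularity at $\rho_2$, and $v_0:=v(\rho_2)=\rho_2C'(\rho_2)<\rho_1$; hence $B^{\bullet}(z,w)$ is analytic in $z$ near $z=v_0$ and, by~(\ref{eqBbulletrep}), has there a local representation
\[
B^{\bullet}(z,w)=\widehat G(z,w)+\widehat H(z,w)\,\bigl(1-w/w_0(z)\bigr)^{3/2},
\]
with $\widehat G,\widehat H$ analytic and $\widehat H$ non-zero near $\bigl(v_0,w_0(v_0)\bigr)$ (the non-vanishing holds for the same reason as in the $2$-connected case, namely $B_3^{\bullet}(v_0)\ne0$, equivalently $D_1(v_0)D_2(v_0)\ne0$). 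Substituting $z=v(x)$ makes $\widehat G,\widehat H$ analytic functions of $(x,X,w)$ with $X=\sqrt{1-x/\rho_2}$, and turns $y(x):=1/w_0(v(x))$ into a power series of the square-root type with $y(\rho_2)=1/w_0(v_0)$. Writing $W=\sqrt{1-y(x)w}$ one gets
\[
C^{\bullet}(x,w)=e^{\widehat G(v(x),w)}\,\exp\bigl(\widehat H(v(x),w)\,W^3\bigr),
\]
and since $\exp(cW^3)=P(W^6)+W^3Q(W^6)$ with $P,Q$ analytic, $P(0)=1$, $Q(0)=c$, this is of the form $G^{*}(x,X,w)+H^{*}(x,X,w)(1-y(x)w)^{3/2}$ with $G^{*},H^{*}$ analytic and $H^{*}=e^{\widehat G}Q$ non-zero at $(\rho_2,0,w_0(v_0))$. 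This is precisely the hypothesis of Lemma~\ref{Le7} with $x_0=\rho_2$; moreover $w_0(v_0)>1$ (e.g.\ $w_0(v_0)>w_0(\rho_1)\approx1.312$ since $w_0$ is monotone decreasing and $v_0<\rho_1$), so $g(x_0)=y(\rho_2)=1/w_0(v_0)<1$ and the last part of Lemma~\ref{Le7} is available.

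Taking $f=C^{\bullet}$ we have $f_{n,k}/f_n=d_{n+1,k}$, since $[x^nw^k]C^{\bullet}/[x^n]C'=d_{n+1,k}$. Lemma~\ref{Le7} then yields: the limit $\overline d_k=\lim_n f_{n,k}/f_n$ exists; dividing the asymptotics of $f_{n,k}$ (uniform for $k\le C\log n$) by those of $f_n$ gives $d_{n,k}\sim\overline d_k$ uniformly for $k\le C\log n$ (modulo a trivial index shift); and the uniform bound $f_{n,k}=O\bigl((g(x_0)+\varepsilon)^kx_0^{-n}n^{-3/2}\bigr)$ together with the $f_n$-asymptotics from the same lemma gives $d_{n,k}=O(\overline q^k)$ uniformly in $n,k$, for any $\overline q\in(1/w_0(v_0),1)$. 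For the closed form of $\overline p(w)$ I would use $\overline p(w)=\lim_n[x^n]C^{\bullet}(x,w)/[x^n]C'(x)$ --- the interchange with $\sum_k$ justified by the $O(\overline q^k)$ bound --- and apply \cite[Lemma~3.1]{DGN2} to $C^{\bullet}(x,w)=e^{B^{\bullet}(v(x),w)}$ and $C'(x)=e^{B^{\bullet}(v(x),1)}$; the ratio of the resulting singular coefficients equals $\rho_2 e^{B^{\bullet}(v_0,w)}\partial_x B^{\bullet}(v_0,w)$, the prefactor $\rho_2$ arising from $e^{B'(v_0)}B''(v_0)=C'(\rho_2)B''(v_0)=1/\rho_2$ (with $v_0B''(v_0)=1$). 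Finally $\overline d_k\sim c'k^{-3/2}w_0(v_0)^{-k}$ follows by singularity analysis of $\overline p(w)$ at $w=w_0(v_0)$: there $\partial_x B^{\bullet}(v_0,w)$ contributes a $\bigl(1-w/w_0(v_0)\bigr)^{1/2}$-term (differentiation in $x$ lowers the $3/2$-power to $1/2$), whose $k$-th coefficient is of order $k^{-3/2}w_0(v_0)^{-k}$; the same expansion also falls out of the $f_{n,k}$-asymptotics of Lemma~\ref{Le7}, and the numerical value $c'\approx3.5952391$ is the one already recorded in \cite{DGN2}.

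I expect the main obstacle to be the bookkeeping of the second paragraph: confirming that after composing the $3/2$-singular expansion of $B^{\bullet}$ with $v(x)$ and exponentiating, the function really separates as $G^{*}+H^{*}(1-y(x)w)^{3/2}$ with $y$ of the square-root type and $H^{*}$ non-vanishing at $(\rho_2,0,w_0(v_0))$ --- in particular verifying $B_3^{\bullet}(v_0)\ne0$ and $w_0(v_0)>1$, and that the expansion in $X$ and the expansion in $W$ do not interfere, which is guaranteed precisely by $v_0<\rho_1$. Everything else is either quoted --- the local expansion~(\ref{eqxCdashx}) of $xC'(x)$, the inequality $v_0<\rho_1$, Lemma~\ref{Le7}, \cite[Lemma~3.1]{DGN2} --- or a routine computation.
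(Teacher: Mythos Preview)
Your proposal is correct and follows essentially the same approach as the paper: show that $C^{\bullet}(x,w)=e^{B^{\bullet}(xC'(x),w)}$ inherits from the local expansion~(\ref{eqLe101}) of $B^{\bullet}$ a representation of the form $G+H(1-\overline y(x)w)^{3/2}$ with $\overline y(x)=1/w_0(xC'(x))$ of square-root type at $\rho_2$, and then invoke Lemma~\ref{Le7}. Your write-up is in fact more detailed than the paper's --- you spell out the even/odd splitting of $\exp(\widehat H W^3)$, verify $w_0(v_0)>1$ via monotonicity, and sketch the derivation of $\overline p(w)$ and of the $k^{-3/2}$ asymptotics --- whereas the paper simply records the $W$-expansion $C_0^{\bullet}+C_2^{\bullet}\overline W^2+C_3^{\bullet}\overline W^3+\cdots$ and says ``we are exactly in the same situation as in the $2$-connected case''.
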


\begin{proof}
By using (\ref{eqLe101}) we derive the singular representation
\[
e^{B^\bullet(x,w)} = e^{B_0^\bullet(x)}
\left( 1 + B_2^\bullet(x) W^2 + B_3^\bullet(x) W^3 + O(W^4) \right).
\]
By using this local expansion and (\ref{eqxCdashx}) we get
\begin{align}
C^\bullet(x,w) &= e^{B^\bullet(xC'(x),w)} \nonumber \\
&= C_0^\bullet(x) + C_2^\bullet(x) \overline W^2 + C_3^\bullet(x)
\overline W^3 + \cdots,
\label{eqCbulletexpSP} \\
&= G(x,X_2,w) + H(x,X_2,w)\left( 1- \overline y(x) w \right)^{3/2},
\nonumber
\end{align}
where all functions $C_j^\bullet(x)$ have a square-root singularity
of the form (\ref{eqyx}) with $x_0 = \rho_2$, $X_2 =
\sqrt{1-x/\rho_2}$, and with $\overline y(x) = 1/w_0(xC'(x))$. We
have $\overline W = \sqrt{1- \overline y(x) w}$, where the function
$\overline y(x) = 1/w_0(xC'(x)) = \overline g(x) - \overline h(x)
X_2$ has also a square-root singularity of the form (\ref{eqyx})
with $x_0 = \rho_2$.

Hence we are exactly in the same situation as in the $2$-connected
case and we can apply Lemma~\ref{Le7}.
\end{proof}

\begin{prop}\label{Pro8}
Let $d_{n,k,\ell}$ denote the probability that two different
(and ordered) randomly selected
vertices in a connected SP graph with $n$ vertices
have degrees $k$ and $\ell$.
Then we have uniformly for $2\le k,\ell \le C \log n$
\begin{equation}%\label{eqPro13}
d_{n,k,\ell} \sim \overline d_k \overline d_\ell.
\end{equation}
Furthermore, we have uniformly for all $n,k\ge 1$
\begin{equation}%\label{eqPro14}
d_{n,k,\ell}= O(\overline q^ {k+\ell})
\end{equation}
for some real number $\overline q$ with $0< \overline q < 1$.
\end{prop}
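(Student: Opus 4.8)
The plan is to follow the proof of Proposition~\ref{Pro6}, now starting from the block decomposition of \emph{connected} SP graphs. The point of departure is the formula of Lemma~\ref{lem:CpandCpp-SP-combinatorics},
\[
C^{\bullet\bullet}(x,w,t) = \frac{x}{(xC'(x))'}\,\frac{\partial}{\partial x}C^\bullet(x,w)\,\frac{\partial}{\partial x}C^\bullet(x,t) + B^{\bullet\bullet}(xC'(x),w,t)\,C^\bullet(x,w)\,C^\bullet(x,t),
\]
and the goal is to check that the right-hand side has the shape (\ref{eqLe81}) demanded by Lemma~\ref{Le8}, with $x_0=\rho_2$, $y(x) = \overline y(x) = 1/w_0(xC'(x))$ and $g(x_0) = 1/w_0(v_0)$, where $v_0 = \rho_2 C'(\rho_2)$; the inequality $g(x_0)<1$, i.e.\ $w_0(v_0)>1$, is exactly what is used when Lemma~\ref{Le7} is applied in Proposition~\ref{Pro7} (and follows from $v_0<\rho_1$ together with $w_0(\rho_1)>1$, since $w_0$ is decreasing on $(0,\rho_1]$). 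Once this is verified, Lemma~\ref{Le8} directly yields the uniform asymptotics of $f_{n,k,\ell}=[x^nw^kt^\ell]C^{\bullet\bullet}(x,w,t)$ and of $f_n$, the claimed $O(\overline q^{k+\ell})$ bound (from the uniform-in-$n,k,\ell$ estimate, with $\overline q = 1/w_0(v_0)+\varepsilon$), and the existence of $d_{k,\ell}=\lim_n f_{n,k,\ell}/f_n$.

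For the singular bookkeeping I would use the expansion (\ref{eqCbulletexpSP}) from the proof of Proposition~\ref{Pro7}: with $X_2=\sqrt{1-x/\rho_2}$ and $\overline W=\sqrt{1-\overline y(x)w}$, one has $C^\bullet(x,w)=G(x,X_2,w)+H(x,X_2,w)\bigl(1-\overline y(x)w\bigr)^{3/2}$ with $G,H$ analytic and $H$ nonzero at the critical point $(x,X_2,w)=(\rho_2,0,w_0(v_0))$ (the key structural fact, inherited from (\ref{eqLe101}), is that there is no $\overline W^{1}$-term, because $B^\bullet$ is an antiderivative in $w$ of $x\,e^{S(x,w)}$, which raises every power of $W$ by two). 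Differentiating in $x$ and using $\partial_xX_2=-1/(2\rho_2X_2)$ together with the fact that $\overline y(x)$ has a genuine square-root singularity at $\rho_2$, one finds
\[
\frac{\partial}{\partial x}C^\bullet(x,w)=\frac{1}{X_2}\bigl(\alpha(X_2,w)+\beta(X_2,w)\,\overline W\bigr)
\]
with $\alpha,\beta$ analytic: no spurious pole $1/(1-\overline y(x)w)$ appears, precisely because the $W^{-1}$ produced by differentiating $(1-\overline y(x)w)^{3/2}$ is multiplied by $(1-\overline y(x)w)$. One checks that $\beta(0,w_0(v_0))\ne0$, which reduces to $H(\rho_2,0,w_0(v_0))\ne0$ and the nonvanishing leading coefficient of $\overline y$. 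Likewise, by (\ref{eqxCdashx}) the function $(xC'(x))'$ has a $1/X_2$-singularity, so $x/(xC'(x))'$ is $X_2$ times an analytic function nonzero at $X_2=0$. Multiplying these factors and the analogue in $t$, the first summand of $C^{\bullet\bullet}$ becomes $\frac1{X_2}(G_1+G_2\overline W+G_3\overline T+G_4\overline W\overline T)$ with $\overline T=\sqrt{1-\overline y(x)t}$ and $G_4$ proportional to $\beta(\cdot,w)\beta(\cdot,t)$, hence nonzero at $(X_2,w,t)=(0,w_0(v_0),w_0(v_0))$.

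Next I would show the second summand is subdominant. By the proof of Proposition~\ref{Pro6}, $B^{\bullet\bullet}(x,w,t)=\frac1X(\widetilde H_1+\widetilde H_2W+\widetilde H_3T+\widetilde H_4WT)$ with $X=\sqrt{1-x/\rho_1}$; since $v_0=\rho_2C'(\rho_2)<\rho_1$, substituting $x\mapsto xC'(x)$ turns $1/X$ into a function analytic and nonzero at $x=\rho_2$, so $B^{\bullet\bullet}(xC'(x),w,t)$, and likewise $C^\bullet(x,w)C^\bullet(x,t)$, is analytic in $X_2$ near $X_2=0$, with branch points only in $\overline W,\overline T$. Hence the second summand contributes only a term analytic in $X_2$, which perturbs the $G_j$ by $X_2$ times analytic functions and affects neither the leading $1/X_2$-behaviour nor $G_4(0,\cdot,\cdot)\ne0$; this is the precise content of the statement, recorded in the remark after Theorem~\ref{ThCOP}, that the asymptotic leading part of $C^{\bullet\bullet}$ comes from the first summand. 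Therefore $C^{\bullet\bullet}(x,w,t)$ is of the form (\ref{eqLe81}) and Lemma~\ref{Le8} applies.

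To conclude, I would identify $d_{k,\ell}$ using the relation $\overline p(w,t)=\overline p(w)\overline p(t)$ proved for connected SP graphs in the same remark after Theorem~\ref{ThCOP} (alternatively, by comparing the explicit constant coming out of Lemma~\ref{Le8} with the square of the one-vertex asymptotics $\overline d_k=c'k^{-3/2}w_0(v_0)^{-k}$ of Proposition~\ref{Pro7}): either route gives $d_{k,\ell}=\overline d_k\overline d_\ell$. Since Lemma~\ref{Le8} moreover gives $d_{n,k,\ell}=(\mathrm{const})\,w_0(v_0)^{-(k+\ell)}(k\ell)^{-3/2}\bigl(1+O(1/\sqrt k+1/\sqrt\ell)\bigr)$ uniformly for $k,\ell\le C\log n$ --- a form matching $\overline d_k\overline d_\ell$ as $k,\ell\to\infty$ --- we obtain $d_{n,k,\ell}\sim\overline d_k\overline d_\ell$ in that range, which is the first assertion, and the $O(\overline q^{k+\ell})$ estimate follows from the uniform bound. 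I expect the only delicate point to be the computation in the second paragraph: confirming that $\partial_x C^\bullet(x,w)$ is genuinely of the ``power of square root'' type $\frac1{X_2}(\alpha+\beta\overline W)$, with no extra pole and with $G_4$ nonvanishing at the critical point, so that the singularity structure of $C^{\bullet\bullet}$ matches the template of Lemma~\ref{Le8} rather than, say, that of Lemma~\ref{Le6}; the application of Lemma~\ref{Le8} itself and the final comparison are routine.
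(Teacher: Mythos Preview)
Your proposal is correct and follows essentially the same route as the paper: verify that $C^{\bullet\bullet}(x,w,t)$ has the shape required by Lemma~\ref{Le8} by differentiating the expansion (\ref{eqCbulletexpSP}) to get $\partial_x C^\bullet(x,w)=\frac1{X_2}(\alpha+\beta\,\overline W)$, combine with $x/(xC'(x))'=X_2\cdot(\text{analytic})$ so that the first summand is $\frac1{X_2}(H_1+H_2\overline W+H_3\overline T+H_4\overline W\,\overline T)$, and observe that since $v_0<\rho_1$ the second summand $B^{\bullet\bullet}(xC'(x),w,t)\,C^\bullet(x,w)\,C^\bullet(x,t)$ is analytic in $X_2$ and only shifts the $H_j$ by $X_2 J_j$. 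The paper's own proof is terser (it does not spell out $G_4\ne 0$ or the final identification $d_{k,\ell}=\overline d_k\overline d_\ell$), but the skeleton is identical.
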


\begin{proof}
We consider the function $C^{\bullet\bullet}(x,w,t)$.
We focus first on the term
\[
 \frac{x}{(xC'(x))'} \frac{\partial}{\partial x}C^\bullet(x,w)
\frac{\partial}{\partial x} C^\bullet(x,t).
\]
Since
\begin{align*}
\frac{\partial}{\partial x} X_2 &= -\frac 1{2x_2 X_2},\\
\frac{\partial}{\partial x} \overline y(x) &= \frac 1{X_2}\left(
\frac 1{2x_2}\overline h(x) - \overline h'(x)X_2^2 + \overline g'(x)
X_2 \right),
\end{align*}
it follows from (\ref{eqCbulletexpSP}) that
$\frac{\partial}{\partial x}C^\bullet(x,w)$ can be represented
as
\[
\frac{\partial}{\partial x}C^\bullet(x,w) = \frac 1{X_2}\left(
\overline G(x,X_2,w) - \overline H(x,X_2,w) \overline W \right).
\]
Hence, we obtain
\begin{align*}
& \frac{x}{(xC'(x))'} \frac{\partial}{\partial x}C^\bullet(x,w)
\frac{\partial}{\partial x} C^\bullet(x,t) \\
&= \frac 1{X_2} \left( H_1(x,X_2,w,t) + H_2(x,X_2,w,t)\overline W +
H_3(x,X_2,w,t)\overline T + H_4(x,X_2,w,t)\overline W\overline T
\right),
\end{align*}
for certain analytic functions $H_j$.

Since $v_0 = \rho_2 C'(\rho_2) < \rho_1$, the function $X(xC'(x)) =
\sqrt{1-xC'(x)/\rho_1}$ is analytic at $x = \rho_2$. Consequently,
the second term
\[
B^{\bullet\bullet}(x C'(x),w,t)   C^\bullet(x,w)C^\bullet(x,t)
\]
can be
represented as
\[
J_1(x,X_2,w,t) + J_2(x,X_2,w,t)\overline W + J_3(x,X_2,w,t)\overline
T + J_4(x,X_2,w,t)\overline W\,\overline T,
\]
with analytic functions $J_j$. Hence we obtain
\begin{align*}
&C^{\bullet\bullet}(x,w,t) \\
&= \frac 1{X_2} \left(\widetilde H_1(x,X_2,w,t) + \widetilde
H_2(x,X_2,w,t)\overline W + \widetilde H_3(x,X_2,w,t)\overline T +
\widetilde H_4(x,X_2,w,t) \overline W\, \overline T   \right),
\end{align*}
with $\widetilde H_j = H_j + X_2 J_j$.

Thus, we can apply Lemma~\ref{Le8} and obtain (as in the 2-connected
case) all the properties claimed.
\end{proof}

\begin{theo}\label{Thcsp}
Let $\Delta_n$ denote the maximum degree of a random
labelled connected SP graph with $n$ vertices.
Then
\[
\frac{\Delta_n}{\log n} \to c \qquad\mbox{in probability}
\]
and
\[
\mathbb{E}\, \Delta_n \sim  c \log n
\qquad (n\to\infty),
\]
where $c = \displaystyle\frac 1{\log(1/q)}\approx 3.482774$, and
$q\approx 0.750416$ is given by
\[ q = \left( \left(1+\frac 1{\tau E(\tau)}\right) \exp\left(-\frac{1}{\tau E(\tau)}\right)-1 \right)^{-1}, \]
with $\tau = \rho_1C'(\rho_1)$.
\end{theo}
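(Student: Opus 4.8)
The plan is to obtain Theorem~\ref{Thcsp} as a direct application of the Master Theorem (Theorem~\ref{Th1}) to the class of labelled connected series-parallel graphs, in exactly the way that Theorem~\ref{Th2csp} was obtained for $2$-connected SP graphs. All three hypotheses of Theorem~\ref{Th1} have, in effect, already been established in Propositions~\ref{Pro7} and~\ref{Pro8}, so the proof reduces to collecting those facts and to rewriting the analytic constant $q$ in the form displayed in the statement.

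First I would check Condition~1. Proposition~\ref{Pro7} asserts the existence of the limiting degree distribution $\overline d_k$ together with the estimate $\overline d_k = c'\cdot k^{-\frac 32}\, w_0(v_0)^{-k}\,(1+O(1/k))$, where $v_0 = \rho_2 C'(\rho_2)$ and $w_0$ is the function of Lemma~\ref{Le8.2bis}. Taking logarithms gives $\log \overline d_k \sim k\log(1/w_0(v_0))$, so Condition~1 holds with $q = 1/w_0(v_0)$, and $0 < q < 1$ because $w_0(v_0) > 1$ --- this is the subcriticality (the hypothesis $g(x_0) < 1$ of Lemma~\ref{Le7}) already invoked in the proof of Proposition~\ref{Pro7}, and the analogue of the inequality $w_0(\rho_1) > 1$ recorded after Lemma~\ref{Le8.2bis}. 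Conditions~2 and~3 then follow at once: the asymptotic independence $d_{n,k} \sim \overline d_k$ and $d_{n,k,\ell} \sim \overline d_k \overline d_\ell$, uniformly for $k,\ell \le C\log n$, is the main conclusion of Propositions~\ref{Pro7} and~\ref{Pro8} respectively, and the uniform bounds $d_{n,k} = O(\overline q^k)$ and $d_{n,k,\ell} = O(\overline q^{k+\ell})$ are stated there as well. Theorem~\ref{Th1} then yields $\Delta_n/\log n \to 1/\log(1/q)$ in probability and $\mathbb{E}\,\Delta_n \sim (\log n)/\log(1/q)$, which is the claim with $c = 1/\log(1/q)$.

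It remains to put $q = 1/w_0(v_0)$ into closed form. Substituting $x = v_0 = \rho_2 C'(\rho_2)$ into
\[
w_0(x) = \left(1 + \frac{1}{xE(x)}\right)\exp\left(-\frac{1}{1+xE(x)}\right) - 1
\]
from Lemma~\ref{Le8.2bis} and simplifying gives the closed form for $q$ displayed in the statement; the numerical value $q \approx 0.750416$, hence $c \approx 3.482774$, then follows by solving numerically the coupled system for $\rho_2$, $C'$ and $E$, together with $C'(x) = e^{B'(xC'(x))}$ and the defining equations of Lemma~\ref{Le8.2}.

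Since the substantive work --- the singularity analysis underlying Condition~2 --- has already been carried out in the proofs of Propositions~\ref{Pro7} and~\ref{Pro8} via Lemmas~\ref{Le7} and~\ref{Le8}, I do not expect a genuine obstacle. The one point deserving care is the internal consistency of $q$ across its three incarnations: as the exponential base of the coefficients of the probability generating function $\overline p(w) = \rho_2\, e^{B^\bullet(v_0,w)}\, \frac{\partial}{\partial x} B^\bullet(v_0,w)$, as the base appearing in the limit $f_{n,k}/f_n$ produced by Lemma~\ref{Le7}, and as the reciprocal $1/w_0(v_0)$ of the subcritical singularity of $D(x,w)$; checking that all three coincide (they do) is the last routine verification.
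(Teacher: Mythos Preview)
Your approach is correct and essentially identical to the paper's: the paper's proof is a one-line appeal to Propositions~\ref{Pro7}, \ref{Pro8} and Theorem~\ref{Th1}, and you have simply spelled out in detail how each of the three hypotheses of Theorem~\ref{Th1} is supplied by those propositions. One caveat on the closed form: the statement as printed has $\tau = \rho_1 C'(\rho_1)$ and $\exp(-1/(\tau E(\tau)))$, whereas the derivation you outline (and Proposition~\ref{Pro7} together with Lemma~\ref{Le8.2bis}) actually produces $\tau = v_0 = \rho_2 C'(\rho_2)$ and $\exp(-1/(1+\tau E(\tau)))$; these appear to be typographical slips in the paper rather than a gap in your argument.
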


\begin{proof}
Again, the proof is a direct consequence of Proposition~\ref{Pro7},
\ref{Pro8} and Theorem~\ref{Th1}.
\end{proof}

\begin{rem}
We recall that the radius of convergence of $C'(x)$ is smaller than
that of $B'(x)$. Consequently,  for the class of connected SP graphs
we obtain automatically that $\overline p(w,t) = \overline p(w)
\overline p(t)$.
\end{rem}

\newpage
%=============================================
%=============================================
\section*{Appendix A}
%=============================================
%=============================================

The proof of Theorem~\ref{Th1} is based on the so-called
{\it first and second moment methods}.

\bl\label{Le1}
Let $X$ be a discrete random variable on non-negative integers
with finite first moment.
Then
\[
\mathbb{P}\{ X > 0\} \le \min\{ 1, \mathbb{E}\, X \} .
\]
Furthermore, if $X$ is a non-negative random variable which is not
identically zero and has finite second moment then
\[
\mathbb{P}\{ X > 0\} \ge \frac{(\mathbb{E}\, X)^2}{\mathbb{E}\, (X^2)}.
\]
\el

\bpf
For the first inequality we only have to observe that
\[
 \mathbb{E}\, X = \sum_{k\ge 0} k\, \mathbb{P}\{X = k\} \ge \sum_{k\ge 1} \mathbb{P}\{X = k\} = \mathbb{P}\{X > 0\}.
\]
The second inequality follows
from an application of the Cauchy-Schwarz inequality:
\[
\mathbb{E}\, X = \mathbb{E}\, \left(X \cdot {\bf 1}_{[X> 0]}\right) \le
\sqrt{\mathbb{E}\, (X^2)}\sqrt{ \mathbb{E}\, ({\bf 1}_{[X> 0]}^2  ) } =
\sqrt{\mathbb{E} (X^2)}\sqrt{ \mathbb{P}\{ X > 0\}  }.
\]
\epf

As indicated in the Introduction, we apply this principle for the
random variable $Y_{n,k}$ that counts the number of vertices of
degree $>k$ in a random graph with $n$ vertices. This random
variable is closely related to the maximum degree $\Delta_n$ by the
relation
\[
Y_{n,k} > 0 \quad \Longleftrightarrow \quad \Delta_n> k.
\]
One of our aims is to get bounds for the expected maximum degree
$\mathbb{E}\, \Delta_n $. Due to the relation
\begin{align*}
\mathbb{E}\, \Delta_n &= \sum_{k \ge 0} \mathbb{P}\{\Delta_n > k\} \\
&=\sum_{k \ge 0} \mathbb{P}\{Y_{n,k} > 0\},
\end{align*}
we are actually led to estimate the probabilities
$\mathbb{P}\{Y_{n,k} > 0\}$, which can be handled via the first and
second moment methods by estimating the first two moments
\[
\mathbb{E}\, Y_{n,k} \quad\mbox{and}\quad \mathbb{E}\, Y_{n,k}^2.
\]

Actually, we compute asymptotics for the
probabilities $d_{n,k}$. Observe that
the number $X_{n,k}$ of vertices of degree $k$ is given by
\begin{equation}\label{eqXnkrel}
X_{n,k} = \sum_{v\in V(G_n)} {\bf 1}_{[d(v)= k]},
\end{equation}
and consequently we have
\[
\mathbb{E}\, X_{n,k} = n d_{n,k}.
\]
Since $Y_{n,k} = \sum_{\ell > k} X_{n,\ell}$, we have
\begin{equation}\label{eqEYnkrep}
\mathbb{E}\, Y_{n,k} = n \sum_{\ell > k} d_{n,\ell}.
\end{equation}

The second moment is a bit more involved. From (\ref{eqXnkrel})
we get
\begin{align*}
X_{n,k}^2 &= \sum_{v,w\in V(G_n)} {\bf 1}_{[d(v)= k]}  {\bf 1}_{[d(w)= k]}  \\
&= \sum_{v\in V(G_n)} {\bf 1}_{[d(v)= k]} + \sum_{v,w\in V(G_n),
v\ne w } {\bf 1}_{[d(v)= k \, \wedge\, d(w)= k]},
\end{align*}
and consequently
\[
\mathbb{E}\, X_{n,k}^2 = n d_{n,k} + n(n-1) d_{n,k,k},
\]
where $d_{n,k,k}$ denotes the probability that two different
randomly selected vertices have degree $k$. Similarly,  for $k\ne
\ell$ we have
\begin{align*}
X_{n,k}X_{n,\ell} &=
\sum_{v,w\in V(G_n)} {\bf 1}_{[d(v)= k]}  {\bf 1}_{[d(w)= \ell]}  \\
&= \sum_{v,w\in V(G_n), v\ne w } {\bf 1}_{[d(v)= k \, \wedge\, d(w)= \ell]}
\end{align*}
and
\[
\mathbb{E}\, X_{n,k}X_{n,\ell} =  n(n-1) d_{n,k,\ell},
\]
where $d_{n,k,\ell}$ denotes the probability that two different randomly
selected vertices have degrees $k$ and $\ell$.

This also shows that
\begin{align*}
\mathbb{E}\, Y_{n,k}^2 &= \sum_{\ell_1,\ell_2 > k} \mathbb{E}\,
X_{n,\ell_1}X_{n,\ell_2} \\
&= \sum_{\ell > k} \mathbb{E}\, X_{n,\ell}^2 +
2 \sum_{\ell_1 > \ell_2 > k} \mathbb{E}\,X_{n,\ell_1}X_{n,\ell_2}  \\
&= n \sum_{\ell >k} d_{n,\ell} + n(n-1) \sum_{\ell > k} d_{n,\ell,\ell} \\
&+ 2 n(n-1) \sum_{\ell_1 > \ell_2 > k} d_{n,\ell_1,\ell_1} \\
&=  n \sum_{\ell >k} d_{n,\ell} + n(n-1) \sum_{\ell_1,\ell_2 > k}d_{n,\ell_1,\ell_1}.
\end{align*}

Summing up we have the following estimates. \bl\label{Le2} Suppose
that $d_{n,k}$ denotes the probability that a randomly selected
vertex in a graph of size $n$ from  a certain class of random planar
graphs has degree $k$, and that $d_{n,k,\ell}$ denotes the
probability that two randomly selected (ordered) vertices have
degrees $k$ and $\ell$. Furthermore let $\Delta_n$ denote the
maximum degree of a random planar graph (in this class) of size $n$.

Then the probability $\mathbb{P}\{ \Delta_n > k\}$ is bounded by
\begin{equation}\label{eqL20}
\frac{ n^2 \left( \sum_{\ell > k} d_{n,\ell} \right)^2 }
{ n \sum_{\ell >k} d_{n,\ell} + n(n-1) \sum_{\ell_1,\ell_2 > k}d_{n,\ell_1,\ell_1}}
\le \mathbb{P}\{ \Delta_n > k\}
\le \min\left\{ 1,
 n \sum_{\ell > k} d_{n,\ell} \right\}.
\end{equation}
Consequently the expected value satisfies
\begin{equation}\label{eqL21}
\mathbb{E}\,\Delta_n \le \sum_{k\ge 0} \min\left\{ 1,
 n \sum_{\ell > k} d_{n,\ell} \right\}
\end{equation}
and
\begin{equation}\label{eqL22}
\mathbb{E}\,\Delta_n \ge \sum_{k\ge 0}
\frac{ n^2 \left( \sum_{\ell > k} d_{n,\ell} \right)^2 }
{ n \sum_{\ell >k} d_{n,\ell} + n(n-1) \sum_{\ell_1,\ell_2 > k}d_{n,\ell_1,\ell_1}}.
\end{equation}
\el

With the help of Lemma~\ref{eqL20} is it easy to
prove Theorem~\ref{Th1}.

\begin{proof}[Proof of Theorem~\ref{Th1}]
We start with the proof of \eqref{eqDeltanresult}. Suppose that the
constant $C$ of condition (2) satisfies $C > 2\max\left\{ (\log
(1/q))^{-1}, (\log (1/\overline q))^{-1} \right\}$, and define
$k_0(n)$ as
\[
k_0(n) = \min\left\{ k\ge 0: n \sum_{\ell > k} \overline d_\ell \le 1 \right\}.
\]
By assumptions (\ref{eqass1}) and (\ref{eqass3})  it follows that
\[
k_0(n) \sim \frac{\log n}{\log (1/q)} \qquad (n\to \infty).
\]
In particular we obtain from (\ref{eqL21})  that
\begin{align*}
\mathbb{E}\, \Delta_n &\le  \sum_{k\ge 0} \min\left\{ 1,
 n \sum_{\ell > k} d_{n,\ell} \right\} \\
&\le k_0(n) + 1 + n \sum_{\ell > k} d_{\ell} \, (1+o(1))    \\
&\sim  \frac{\log n}{\log q^{-1}}.
\end{align*}

Next define $k_1(n)$ by
\[
k_1(n) = \max\left\{ k\ge 0: n \sum_{\ell > k} \overline d_\ell \ge
\log n \right\},
\]
which also satisfies
\[
k_1(n) \sim \frac{\log n}{\log q^{-1}} \qquad (n\to \infty).
\]
By assumptions (\ref{eqass1})--(\ref{eqass3}) it  follows that,
uniformly for $0\le k \le k_1(n)$,
\[
n(n-1) \sum_{\ell_1,\ell_2 > k}d_{n,\ell_1,\ell_1}\sim
 n^2 \left( \sum_{\ell > k} d_{n,\ell} \right)^2
\]
and
\[
 n \sum_{\ell >k} d_{n,\ell} = o\left(
 n^2 \left( \sum_{\ell > k} d_{n,\ell} \right)^2\right).
\]
Consequently
\[
 \frac{ n \sum_{\ell >k} d_{n,\ell} + n(n-1) \sum_{\ell_1,\ell_2 > k}d_{n,\ell_1,\ell_1}-
 n^2 \left( \sum_{\ell > k} d_{n,\ell} \right)^2 }
{ n \sum_{\ell >k} d_{n,\ell} + n(n-1) \sum_{\ell_1,\ell_2 >
k}d_{n,\ell_1,\ell_1}} \to 0,
\]
uniformly for $0\le k \le k_1(n)$ as $n\to\infty$.

In order to obtain an upper bound for $\mathbb{E}\, \Delta_n$ we use
(\ref{eqL22}) and get, as $n\to\infty$,
\begin{align*}
\mathbb{E}\,\Delta_n &\ge \sum_{0\le k\le k_1(n)}
\frac{ n^2 \left( \sum_{\ell > k} d_{n,\ell} \right)^2 }
{ n \sum_{\ell >k} d_{n,\ell} + n(n-1) \sum_{\ell_1,\ell_2 > k}d_{n,\ell_1,\ell_1}} \\
&= \sum_{0\le k\le k_1(n)} 1 \\
& -\sum_{0\le k\le k_1(n)}
 \frac{ n \sum_{\ell >k} d_{n,\ell} + n(n-1) \sum_{\ell_1,\ell_2 > k}d_{n,\ell_1,\ell_1}-
 n^2 \left( \sum_{\ell > k} d_{n,\ell} \right)^2 }
{ n \sum_{\ell >k} d_{n,\ell} + n(n-1) \sum_{\ell_1,\ell_2 > k}d_{n,\ell_1,\ell_1}} \\
&\sim \frac{\log n}{\log q^{-1}}.
\end{align*}

Finally, it follows directly from assumptions \eqref{eqass1}--\eqref{eqass3}
and the estimate \eqref{eqL20} that for every $\varepsilon> 0$
\[
\mathbb{P}\left\{ \left| \frac{\Delta_n}{\log n} - \frac 1{\log q^{-1}} \right|
\ge \varepsilon \right\} \to 0
\]
as $n\to \infty$. This implies \eqref{eqDeltanresult0} and completes
the proof of Theorem~\ref{Th1}
\end{proof}

\begin{rem}
It is clear that asymptotic relations with error terms
(that are more precise than (\ref{eqass1})--(\ref{eqass3}))
imply an error term for the expected maximum degree
$\mathbb{E}\, \Delta_n$.
\end{rem}

%=============================================
%=============================================
\section*{Appendix B}
%=============================================
%=============================================

\begin{proof}[Proof of Lemma \ref{Le3}]
We use Cauchy's formula
\[
f_{n,k} = \frac{1}{(2\pi i)^2} \int_{\gamma} \int_{\Gamma}
\frac{ f(x,w) } {x^{n+1} w^{k+1}}\, dx\, dw
\]
with the following contours of integration.

For the integration with respect to $x$ we use
$\gamma = \gamma_1 \cup
\gamma_2 \cup \gamma_3 \cup \gamma_4$, where
\begin{eqnarray*}
\gamma_1 &=& \left\{ x = x_0\left(1 + \frac {-i+(\log n)^2  -t}n\right)
 : 0\le t \le (\log n)^2
\right\},\\
\gamma_2 &=& \left\{ x = x_0\left(1 - \frac 1n e^{-i\phi}\right):
-\frac \pi 2 \le \phi \le \frac \pi 2 \right\}, \\
\gamma_3 &=& \left\{ x = x_0\left(1 + \frac {i+t}n \right)
 : 0\le t \le (\log n)^2 \right\},
\end{eqnarray*}
and $\gamma_4$ is a circular arc centered at the origin and making
$\gamma$ a closed curve.

Similarly, for the integration with respect to $w$ we use
$\Gamma = \Gamma_1 \cup
\Gamma_2 \cup \Gamma_3 \cup \Gamma_4$, where
\begin{eqnarray*}
\Gamma_1 &=& \left\{ w = w_0\left( 1 + \frac {-i+(\log k)^2  -s}k
\right): 0\le s \le (\log k)^2
\right\},\\
\Gamma_2 &=& \left\{ w = w_0\left(1 - \frac 1k e^{-i\psi}\right):
-\frac \pi 2 \le \psi \le \frac \pi 2 \right\}, \\
\Gamma_3 &=& \left\{ w = w_0\left(1 + \frac {i+s}w \right)
: 0\le s \le (\log k)^2 \right\},
\end{eqnarray*}
where $w_0 = 1/g(x_0)$ and $\Gamma_4$ is a circular arc centered
at the origin and making $\Gamma$ a closed curve.

We recall that we assume that $k\le C\log n$ (for some constant
$C>0$). The following calculations will show that the Cauchy
integral is always well defined, in particular we have $1 - y(x)w
\ne 0$ for $x\in \gamma$ and $w\in \Gamma$. (Recall that $y(x)$ has
non-negative coefficients.) The most important part of the integral
comes from the $x\in \gamma_1 \cup \gamma_2 \cup \gamma_3$ and $w\in
\Gamma_1 \cup \Gamma_2 \cup \Gamma_3$.  If we use the substitutions
$x = x_0\left(1 + \frac tn\right)$ and $w = w_0\left(1 + \frac sk
\right)$, then $t$ and $s$ vary in a corresponding curve $H_1\cup
H_2 \cup H_3$ that can be considered as a finite part of a so-called
Hankel contour  $H$ (see Figure~\ref{f:hankelcontour}). In
particular we note that $|X| \le (\log n)/\sqrt n$ and $|w-w_0| \ge
w_0/k \ge w_0/(C \log n)$ on these parts of the integration.

\begin{figure}[htp] \centering
\includegraphics[width=0.5\hsize]{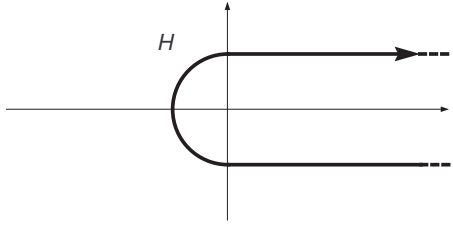}
\caption{Hankel contour of integration}
          \label{f:hankelcontour}
\end{figure}

By using the substitution $y(x) = g(x_0) - h(x_0)X + O(X^2)$ and the
relations $w_0g(x_0)=1$, we have
\[
1-y(x)w = g(x_0)(w_0-w) + h(x_0)w_0 X + O(X^2).
\]
Hence, if $n$ is large enough we definitely have $1-y(x)w\ne 0$, in
particular the term $h(x_0)w_0 X + O(X^2)$ is of minor order of
magnitude on these parts of the integration. Consequently we can
rewrite the reciprocal of $1-y(x)w$ as
\begin{align*}
\frac 1{1-y(x)w} &= \frac 1{g(x_0)(w_0-w) + h(x_0)w_0 X + O(X^2)} \\
&= \frac 1{g(x_0)(w_0-w)}
\left( 1 - \frac{h(x_0)w_0 X}{g(x_0)(w_0-w)} +
O\left( \frac {|X|^2}{ |w_0-w|^2} \right) \right) \\
&= \frac 1{1- \frac w{w_0}} - \frac{h(x_0)w_0 X}{\left( 1- \frac w{w_0}\right)^2}
+ O\left( \frac{|X|^2}{|w-w_0|^3} \right).
\end{align*}
Next we set $x = x_0(1-X^2)$ and rewrite $G(x,X,w)$ locally as
\[
G(x,X,w) = G_0(w) + G_1(w)X + O(X^2).
\]
Hence, if $x$ and $w$ are in this range then $f(x,w)$ can be represented as
\begin{align*}
f(x,w) &= \left( G_0(w) + G_1(w)X + O(X^2) \right)
\left( \frac 1{1- \frac w{w_0}} - \frac{h(x_0)w_0 X}{\left( 1- \frac w{w_0}\right)^2}
+ O\left( \frac{|X|^2}{|w-w_0|^3} \right) \right) \\
&= \frac{ G_0(w)}{1- \frac w{w_0}} -
\frac{G(x_0,0,w_0)h(x_0)w_0 X }{\left( 1- \frac w{w_0}\right)^2}
+ O\left( \frac{|X|}{\left| 1- \frac w{w_0} \right| } \right).
\end{align*}
The first term does not depend on $x$, hence it does not contribute
for $n\ge 1$. The second term provides the asymptotic leading term
\begin{align*}
&\frac{1}{(2\pi i)^2} \int\limits_{\gamma_1 \cup \gamma_2 \cup \gamma_3}
\int\limits_{\Gamma_1 \cup \Gamma_2 \cup \Gamma_3}
\frac{G(x_0,0,w_0)h(x_0)w_0 X }{\left( 1- \frac w{w_0}\right)^2}\,
 x^{-n-1} w^{-k-1}\, dx\, dw\\
&\qquad= -\frac{G(x_0,0,w_0) h(x_0)}{2 \sqrt{\pi}} \,
g(x_0)^{k-1} x_0^{-n} k\, n^{-\frac 32}
\left( 1 + O\left( \frac 1k \right) \right),
\end{align*}
compare with the methods of Flajolet and Odlyzko \cite{FO}. We just
remark that $x^{-n}$ and $w^{-k}$ are replaced by
\[
x^{-n} = x_0^{-n}e^{-t + O(t^2/n)} \quad\mbox{and}\quad w^{-k} =
w_0^{-k}e^{-s + O(s^2/k)},
\]
so that one can use Hankel's representation of $1/\Gamma(s)$ to
evaluate the resulting integrals asymptotically.

Finally, the remainder term provides an error term of the form
\[
\int\limits_{\gamma_1 \cup \gamma_2 \cup \gamma_3}
\int\limits_{\Gamma_1 \cup \Gamma_2 \cup \Gamma_3}
O\left( \frac{|X|}{\left| 1- \frac w{w_0} \right| } \right)
 |x|^{-n-1} |w|^{-k-1}\, |dx|\, |dw|
= O\left( w_0^{-k} x_0^{-n}  n^{-\frac 32}\right),
\]
compare again with \cite{FO}.

If $x\in \gamma_1 \cup \gamma_2 \cup \gamma_3$ and $w\in \Gamma_4$
we can argue in a similar way. We use the expansions
\begin{align}
\frac 1{1 - y(x)w} &=
\frac 1{1- \frac w{w_0}} +
O\left( \frac{|X|}{\left| 1- \frac w{w_0}\right|^2} \right), \label{eqden1}\\
G(x,X,w) &= G_0(w) + O(|X|), \label{eqden2}
\end{align}
and the bounds $x^{-n} = O( x_0^{-n})$ and $w^{-k} = O(w_0^{-k}e^{-(\log k)^2})$
to observe that
\begin{align*}
&\frac{1}{(2\pi i)^2} \int\limits_{\gamma_1 \cup \gamma_2 \cup \gamma_3}
\int\limits_{\Gamma_4}
\left( \frac{G_0(w)}{1- \frac w{w_0}} +
O\left( \frac {|X|}{ \left| 1- \frac w{w_0} \right|} \right) \right)
x^{-n-1} w^{-k-1}\, dx\, dw \\
&\qquad = O\left( w_0^{-k} x_0^{-n} k e^{-(\log k)^2} n^{-\frac 32}\right).
\end{align*}
Note that the first term does not depend on $x$ and does not contribute
if $n\ge 1$.

Next suppose that $x\in \gamma_4$ and $w\in \Gamma_1 \cup \Gamma_2 \cup \Gamma_3$.
In this case we need not be that precise.
We can use the estimates $G(x,X,w) = O(1)$ and $|1-y(x)w| \ge c/k \ge c'/\log n$
(for certain positive constants $c,c'$). Furthermore we have
$x^{-n} = O( x_0^{-n} e^{-(\log n)^2})$ and $w^{-k} = O(w_0^{-k})$.
Hence the corresponding integral can be estimated by
\[
\frac{1}{(2\pi i)^2} \int\limits_{\gamma_4} \int\limits_{\Gamma_1
\cup \Gamma_2 \cup \Gamma_3} \frac{G(x,X,w)}{1-y(x)w} x^{-n-1}
w^{-k}\, dx\, dw = O\left( x_0^{-n} w_0^{-k} \log n\,  e^{-(\log
n)^2}) \right),
\]
which is negligible compared to the asymptotic leading term.

Finally, if $x\in \gamma_4$ and $w\in \Gamma_4$, the corresponding
integral just provides an error term of the form
\[
O\left( x_0^{-n} w_0^{-k} \log n\,  e^{-(\log n)^2-(\log k)^2)}
\right),
\]
which is again negligible. This proves the asymptotic expansion
(\ref{eqLe31-asmp-exp}).

In order to show the upper bound (\ref{eqLe32}) we use again
Cauchy's formula for $x \in \gamma_1 \cup \gamma_2 \cup \gamma_3 \cup \gamma_4$
(as above) and $|w| = w_0(1-\delta)$ for some $\delta> 0$.
If $x \in \gamma_1 \cup \gamma_2 \cup \gamma_3$ we use the
expansions (\ref{eqden1}) and (\ref{eqden2}) to obtain
\begin{align*}
&\frac{1}{(2\pi i)^2} \int\limits_{\gamma_1 \cup \gamma_2 \cup \gamma_3}
\int\limits_{|w| = w_0(1-\delta)}
\left( \frac{G_0(w)}{1- \frac w{w_0}} +
O\left( \frac {|X|}{ \left| 1- \frac w{w_0} \right|} \right) \right)
x^{-n-1} w^{-k-1}\, dx\, dw \\
&\qquad = O\left( w_0^{-k}(1-\delta)^{-k} x_0^{-n} n^{-\frac 32}\right).
\end{align*}
The integral over  $x\in \gamma_4$ and $|w| = w_0(1-\delta)$ can
be estimated directly (and similarly to the above), which leads
to an additional error term of the form
\[
O\left( x_0^{-n} w_0^{-k}(1-\delta)^{-k} e^{-(\log n)^2} \right).
\]
This completes the proof of (\ref{eqLe32}).

Now suppose that $g(x_0)< 1$. Then the generating function
of $f_n = \sum_k f_{n,k}$ is given by
\[
\sum_{n\ge 0} f_n x^n = f(x,1) = \frac{G(x,X,1)}{1-y(x)}.
\]
By a local expansion it follows that
\[
\frac{G(x,X,1)}{1-y(x)} = \frac{G(x_0,0,1)}{1-g(x_0)} -
\frac{h(x_0)G(x_0,0,1) - (1-g(x_0))G_v(x_0,0,1)} {(1-g(x_0))^2} X +
O(X^2),
\]
which induces an asymptotic expansion for $f_n$ of the form
\[
f_n \sim \frac{h(x_0)G(x_0,0,1) - (1-g(x_0))G_v(x_0,0,1)} {2
\sqrt\pi (1-g(x_0))^2} x_0^{-n} n^{-3/2}.
\]
Similarly we can derive an asymptotic expansion for
\[
\sum_k f_{n,k}w^k = [x^n] f(x,w),
\]
which shows that the limit
\[
\lim_{n\to\infty} \sum_{k} \frac{f_{n,k}}{f_n} w^k
\]
exists uniformly for $|w|\le 1$. This also shows the existence of the
limits $\overline d_k= \lim_{n\to\infty} f_{n,k}/f_n$.
\end{proof}

\begin{proof}[Proof of Lemma \ref{Le4}]
The proof of Lemma~\ref{Le4} is very close to that of
Lemma~\ref{Le3}. The essential observation is an asymptotic
representation of $f(x,w,t)$ of the form
\begin{align*}
f(x,w,t) &= \frac{G(x_0,0,w_0,w_0)}{X \left( 1 - \frac w{w_0}\right)^2
\left( 1 - \frac t{w_0}\right)^2 }
- \frac{2 h(x_0) G_0(w,t)}{1 - \frac w{w_0}} \\
&- \frac{2 h(x_0) G_0(w,t)}{1 - \frac t{w_0}} + O\left(
\frac{X}{|w-w_0|} + \frac{X}{|t-w_0|} \right),
\end{align*}
for $(x,w,t)$ close to the singularity $(x_0,w_0,w_0)$, and the fact
that the first term is the asymptotic leading one.
\end{proof}

\begin{proof}[Proof of Lemma~\ref{Le5}]
The main observation is that $f(x,w)$ can be approximated by
\begin{align*}
f(x,w) &= G(x_0,0,w_0)\, \exp\left( \frac{H(x_0,0,w)}{1 - \frac w{w_0}} \right) \\
&\quad \times \Biggl( 1  + \Biggl( \frac{G(x_0,0,w)}{G_v(x_0,0,w)}
+  \frac{H_v(x_0,0,w)}{1 - \frac w{w_0}}  - \frac{H(x_0,0,w)h(x_0)w_0 }
{\left(1 - \frac w{w_0} \right)^2}
\Biggr)X \\
& \qquad \qquad  + O\left( \frac{X^2}{|w-w_0|^3} \right) \Biggr)
\end{align*}
if $x$ and $w$ are close to their singularities $x_0$ and $w_0$. Hence, the term
\[
- G(x_0,0,w_0)\,
\exp\left( \frac{H(x_0,0,w)}{1 - \frac w{w_0}} \right)
\frac{h(x_0)H(x_0,0,w_0)w_0 X}{\left(1 - \frac w{w_0} \right)^2}
\]
leads to the asymptotic expansion as claimed. The main difference
with the proof of Lemma~\ref{Le3} is that the contour integration
with respect to $w$ is a circle $|w| = w_0(1- \eta)$, where $\eta
\sim c k^{-1/2}$ is chosen such that $w_0(1-\eta)$ becomes a saddle
point of the integrand; compare with Hayman's method \cite{Hayman}
and with \cite{BPS}. All error terms can be easily estimated.
\end{proof}

\begin{proof}[Proof of Lemma~\ref{Le6}]
The proof is (more or less) a direct combination of the methods used
in the proof of Lemma~\ref{Le3} and \ref{Le5}. The main observation
here is that the asymptotic leading term of $f(x,w,t)$ is of the
form
\[
\frac{G(x_0,0,w_0,w_0)}{X}\, \frac{\displaystyle\exp\left(
\frac{H(x_0,0,w)}{1- w/{w_0}} + \frac{H(x_0,0,t)}{1-  t/{w_0}}
\right)} {\left( 1- w/{w_0}\right)^2\left( 1- t/{w_0}\right)^2}.
\]
\end{proof}

\begin{proof}[Proof of Lemma~\ref{Le7}]
We can neglect the function $G(x,X,w)$ since it is analytic in $w$
around the critical point $(x_0,w_0)$.

The main observation now it that the remaining part can be
represented (locally) as
\begin{align*}
& H(x,X,w)  \left( 1- \frac w{w_0} + h(x_0)w_0 X + O(X^2) \right)^{3/2} \\
&= H(x,X,w) \left( 1- \frac w{w_0}\right)^{3/2}
\left( 1 + \frac{(3/2)h(x_0)w_0 X}{1 - \frac w{w_0}} + O\left( \frac{X^2}{|w-w_0|} \right)\right).
\end{align*}
Hence, the main contribution comes from the term
\[
\frac 32 h(x_0)w_0H(x_0,0,w_0) \left( 1- \frac w{w_0}\right)^{1/2}  \left( 1- \frac x{x_0}\right)^{1/2}.
\]
\end{proof}

\begin{proof}[Prof of Lemma \ref{Le8}]
The proof is an easy combination of the proofs of Lemma~\ref{Le7} and \ref{Le4}.
The asymptotic leading term of $f(x,w,t)$ is given by
\[
\frac{G_4(x_0,0,w_0,w_0)}{X} \left( 1 - \frac w{w_0} \right)^{1/2}
\left( 1 - \frac t{w_0} \right)^{1/2}.
\]
\end{proof}


\begin{thebibliography}{99}

\bibitem{BPS}
N. Bernasconi, K. Panagiotou, A. Steger, The degree sequence of
random graphs from subcritical classes,   Combin. Probab. Comput. 18
(2009),  647--681.

%On the degree sequences of random and outerplanar and
%series-parallel graphs, Lecture Notes In Computer Science; Vol. 5171, 2008, pp. 303--316.

\bibitem{BGKN-series-parallel}
M. Bodirsky, O. Gim\'enez, M. Kang and M. Noy, Enumeration and limit
laws for series-parallel graphs. Europ. J. Combin. 8 (2007), 2091--2105.

\bibitem{DrEJC}
M. Drmota,
A bivariate asymptotic expansion of coefficients of powers of generating functions,
European J. Combin. 15 (1994), no. 2, 139--152

\bibitem{Drm-randomtrees}
M. Drmota,
Random Trees, Springer, Wien-New York, 2009.

\bibitem{DGN1}
M. Drmota, O. Gim\'enez, M. Noy, Vertices of given degree in
series-parallel graphs, Random Structures Algorithms 36 (2010),
273--314.

\bibitem{DGN2}
M. Drmota, O. Gim\'enez, M. Noy, Degree distribution in random
planar graphs, arXiv:0911.4331

\bibitem{DGN3}
M. Drmota, O. Gim\'enez, M. and Noy,
The maximum degree of planar graphs II,
manuscript in preparation.

\bibitem{FO}
P. Flajolet and A. Odlyzko, Singularity analysis of generating functions,
SIAM J. Discrete Math.
3 (1990), 216-- 240.

\bibitem{FS} P.~Flajolet, R.~Sedgewick,
\emph{Analytic Combinatorics}, Cambridge University Press, Cambridge (2009).

\bibitem{gao-wormald}
Z. Gao and   N. C. Wormald, The distribution of the maximum vertex
degree in random planar maps.  J. Combin. Theory Ser. A  89
(2000),  201--230.

\bibitem{Hayman}
W. K. Hayman, A generalisation of Stirling's formula, J. Reine Angew. Math. 196 (1956),
67--95.

\bibitem{MR} C. McDiarmid and B. Reed,
On the maximum degree of a random planar graph,
Combin. Probab. Comput. 17 (2008), 591--601.


\end{thebibliography}
\end{document}